\documentclass[twoside,a4paper,reqno]{amsart}
\usepackage{style}
\usepackage[colorlinks=true, breaklinks=true, urlcolor=webbrown, linkcolor=RoyalBlue, citecolor=webgreen, backref=page]{hyperref}
\usepackage{verbatim} %

\begin{document}

\title[Orthogonal polynomials]{From local to global asymptotic behaviour\\ of orthogonal polynomials}
\author{R.\,Bessonov} 
\author{A.\,Nicolau}

\address{
R.\,Bessonov: Faculty of Mathematics and Physics\\ 
University of Ljubljana\\  
and Institute of Mathematics, Physics and Mechanics\\
Jadranska ulica 19, 1000 Ljubljana}
\email{roman.bessonov@fmf.uni-lj.si}

\address{A.\,Nicolau: Departament de Matem{\`a}tiques\\
Universitat Aut\`onoma de Barcelona\\ and Centre de Recerca Matem{\`a}tica \\08193 Barcelona}
\email{artur.nicolau@uab.cat}

\subjclass[2020]{42C05}
\keywords{Szeg\H{o} class, uniform asymptotics, universality, Ces\`aro convergence}

\begin{abstract}
Let $\{\phi^*_n\}$ be the sequence of reflected orthogonal polynomials on the unit circle $\T$ generated by a measure $\mu$ of Szeg\H{o} class, and let $D_{\mu}$ be the Szeg\H{o} function of $\mu$. We prove the uniform Ces\`aro asymptotics 
$$
\sup_{z \in \Gamma_\zeta}\Biggl(\frac{1}{n}\sum_{k = 0}^{n-1}\Bigl||\phi_k^*(z) D_{\mu}(z)|^2 - 1\Bigr|\Biggr) \to 0, \qquad n \to \infty,
$$ 
for almost all Stolz angles $\Gamma_{\zeta}$, $\zeta\in \T$. This extends a well-known asymptotic result of M\'at\'e, Nevai, and Totik (1991) from the local scale $O(1/n)$ near $\T$ to the global scale $O(1)$. We also study asymptotic behavior of arguments of orthogonal polynomials and extend a classical theorem due to Grenander and Szeg\H{o} using a new technique. As an application, we derive global asymptotic results for polynomial reproducing kernels under various assumptions on the orthogonality measure.            
\end{abstract}

\maketitle

\section{Introduction}
Let $\mu$ be a probability measure on the unit circle $\T = \{z: \; |z| = 1\}$. Consider its Radon-Nikodym decomposition, $\mu = w\,dm +\mus$. Here and in what follows $m$ is the Lebesgue measure on $\T$ normalized by $m(\T) = 1$, the function $w$ belongs to $L^1(\T) = L^1(m)$, and $\mus$ is the singular part of $\mu$. The measure $\mu$ is said to belong to the Szeg\H{o} class $\szc$ if $\log w \in L^1(\T)$, or, equivalently, 
\begin{equation}\label{eqSz}
\int_{\T}\log w\, dm > - \infty.
\end{equation}
Measures of Szeg\H{o} class play a prominent role in the theory of orthogonal polynomials \cite{Szbook}, \cite{Simonbook1}, \cite{SimonDes}. With each $\mu \in \szc$ one can associate the unique outer function $D_\mu$ in the open unit disk $\D = \{z: |z| < 1\}$ satisfying $|D_\mu|^2 = w$ almost everywhere on $\T$ and such that $D_\mu(0) > 0$. This function is called the Szeg\H{o} function of $\mu$. Let $\{\phi_n\}$ be the sequence of orthogonal polynomials in $L^2(\mu)$, $\deg \phi_n = n$, and let $\phi_{n}^{*} $ be the corresponding reflected polynomials defined by $\phi_{n}^{*} (z) = z^n \ov{\phi_n(1/\bar z)}$. We use the standard normalization
\begin{equation}\label{eqonb}
\|\phi_{n}\|_{L^2(\mu)} = 1, \qquad \phi_{n}^{*}(0) > 0, \qquad n \ge 0.
\end{equation}
Szeg\H{o} theorem determines the asymptotic behavior of reflected polynomials $\phi_n^*$ in the open unit disk $\D$. It says that
$$
\sup_{n \ge 0} \phi_{n}^{*}(0) < \infty \quad \Longleftrightarrow \quad \mu \in \szc,
$$
and, moreover, for every $\mu \in \szc$ we have
\begin{equation}\label{szthm}
\lim_{n \to \infty} \phi^*_n(z) D_{\mu}(z) = 1, 
\end{equation}
uniformly on compact subsets of $\D$. See Theorem 2.4.1 and Theorem 2.7.15 in Simon book \cite{Simonbook1} for the proof. One of the most studied questions in the theory of orthogonal polynomials asks under which assumptions on the orthogonality measure $\mu$, relation \eqref{szthm} holds almost everywhere on the unit circle $\T$. Let us briefly describe known results (the reader familiar with the history of the problem can go directly to Section \ref{s2} for the results of the present paper). 

\medskip

\subsection{Regular measures, uniform convergence}\label{s11}
When studying uniform convergence of $\{\phi^*_n\}$ on $\T$, it is natural to require smoothness properties on the measure $\mu$. Let us state apparently the best known result in this direction. Let $\mu \in \szc$ be such that $\mu = w\,dm$ and $\log w$ is a continuous function in $\T$. Assume its modulus of continuity $\omega (t)= \sup \{|\log w (\xi) - \log w(z)| : |\xi - z| \le t \}$ satisfies the following Dini condition
\begin{equation} \label{dini}
\int_0^1 \frac{\omega (t)}{t}\,dt < \infty . 
\end{equation}
Then \eqref{szthm} holds uniformly on $\T$. Geronimus and Golinski \cite{GG65} attributed this result to Grenander and Szeg\H{o} \cite{GS58}. A proof can be found in Golinski \cite{G67} and a local version, that is, considering the modulus of continuity in an arc, in Badkov \cite{Badkov79}. It is worth mentioning that the condition on the continuity of $\log w$ is not sufficient and \eqref{dini} is really needed. Actually Golinski \cite{G74} proved that there exist weights $w$ such that $\log w$ is a continuous function on $\T$ whose modulus of continuity satisfies $\omega (t) \log (t) \to 0$ as $t \to 0$, such that $\{\phi^*_n\}$ does not converge uniformly on $\T$.     

\subsection{Steklov problem for weights uniformly separated from zero}

Given $0<\delta < 1$, a probability measure $\mu = w\,dm$ is in the Steklov class $S_\delta$ if $w \geq \delta$ at almost every point in $\T$. To emphasize the role of the measure $\mu$, the orthonormal polynomials with respect to the measure $\mu$ will  be denoted by $\varphi_n (z, \mu)$. In 1921, Steklov raised the conjecture of deciding if $\sup_n |\varphi_n (z, \mu)| < \infty$ for any $z \in \T$ provided $\mu \in S_\delta$. This conjecture was solved in the negative by Rahmanov \cite{R81} leading to the problem of finding the right asymptotics of $\|\varphi_n (z, \mu)\|_\infty = \sup \{|\varphi_n (z, \mu) |: z \in \T\}$. Consider $M(n , \delta) = \sup \{\|\varphi_n (z, \mu)\|_\infty : \mu \in S_\delta\}$. Aptekarev, Denisov and Tulyakov \cite{ADT} proved the remarkable result that fixed $0<\delta<1, $ $M(n , \delta)$ is comparable to $\sqrt{n}$. Later Denisov \cite{D18} considered positive measures $\mu = w\,dm$ with $w,\,1/w \in L^\infty (\T) $ and proved that there exists a constant $C>0$ such that $\sup \{ \|\varphi_n (z, \mu)\|_\infty : 1 \le w \le 1 + \varepsilon \} \le C n^{C \varepsilon}$ if $0 < \varepsilon < 1$ and $\sup \{ \|\varphi_n (z, \mu)\|_\infty : 1 \le w \le T \} \le C n^{ 1/2 - C/T}$ if $T>2$. Moreover these estimates are sharp up to the value of the constant $C$. See also  Ambroladze \cite{Amb91} for continuous weights, Denisov and Rush \cite{DR17} for $\rm{BMO}$ weights, and Alexis, Aptekarev, Denisov \cite{AAD} for Muckenhoupt $A_p$ weights.

\subsection{Averaged convergence, general Szeg\H{o} measures}
In 1991, M\'at\'e, Nevai, and Totik \cite{MNT} proved a fundamental result on Christoffel functions generated by general measures $\mu$ of Szeg\H{o} class $\szc$. In the equivalent language of orthogonal polynomials, it establishes Ces\`aro convergence
\begin{equation}\label{eqMNT}
\lim_{n \to \infty}\frac{1}{n}\sum_{k=0}^{n-1}|\phi_k^*(z) D_{\mu}(z)|^2 = 1
\end{equation}
for Lebesgue almost every point $z$ on the unit circle $\T$.  In 2008, Findley \cite{F08} showed that \eqref{eqMNT} holds almost everywhere on an open arc $I \subset \T$ if we assume that the measure $\mu$ is regular in the sense of Ullman and the {\it local} Szeg\H{o} condition holds, i.e., $\log w \in L^1(I)$. Originally, the study of asymptotic behaviour of Christoffel functions was motivated by the problem of Ces\`aro summability of Fourier series generated by orthogonal polynomials \cite{MN80}, in particular, by the previous work of Freud \cite{Fr52}.  Surprisingly, \eqref{eqMNT} turned out to be very important in the seemingly unrelated area of universality properties of orthogonal polynomials. The first application of \eqref{eqMNT} in this context is due to Lubinsky \cite{Lubinsky}, it fast became a standard method. Totik \cite{Totik} discusses a general idea of usage of  \eqref{eqMNT} in universality problems and gives another proof of Findley's result. A recent breakthrough by Eichinger, Luki\'c, and Simanek~\cite{ELS} showed that Szeg\H{o} condition \eqref{eqSz} is not really needed for universality, this phenomenon is much more general. Still, under the Szeg\H{o} condition we know {\it the scale} at which universality holds ($\sim 1/n$), and this knowledge follows directly from \eqref{eqMNT}, see the discussion after Theorem 1.2 in \cite{ELS}. In the Szeg\H{o} case we also know a qualitative estimate for {\it the rate of convergence} of universality limits, see \cite{Bes24}. This information is not available in the general non-Szeg\H{o} case due to a usage of compactness in \cite{ELS}. 

\medskip

\subsection{Assumptions on the side of recurrence coefficients}
In the next portion of convergence results, assumptions on the orthogonality measure $\mu$ are formulated in terms of the so-called recurrence coefficients. Let us recall that orthogonal polynomials generated by a probability measure $\mu$ with an infinite support on $\T$ satisfy the following recurrence relation:
\begin{equation}\label{eqreq}
\phi_{n+1}(z) = \frac{z \phi_n(z) - \ov{a_n}\phi_n^*(z)}{\sqrt{1-|a_n|^2}}, \quad n \ge 0, 
\end{equation}
for all $z \in \C$ and some numbers $a_n \in \D$, $n \ge 0$, that are called recurrence coefficients of $\mu$. It is also known that any sequence $\{a_n\}_{n \ge 0} \subset \D$ is the sequence of recurrence coefficients of a unique probability measure $\mu$ with an infinite support on $\T$. See Sections 1.5, 1.7 in \cite{Simonbook1} for the proofs of these facts. Heuristically, small recurrence coefficients $\{a_n\}$ give rise to ``regular'' measures $\mu$, Szeg\H{o} functions $D_\mu$, and reflected orthogonal polynomials $\phi_n^*$.  For example, we have $\mu = m$, $D_\mu = 1$, $\phi_n^* = 1$, $n \ge 0$, for identically zero recurrence coefficients $\{a_n\}_{n \ge 0}$. It is therefore natural to ask about the almost sure convergence property 
\begin{equation}\label{convc}
\lim_{n \to \infty} \phi^*_n(z) D_{\mu}(z) = 1 \;\; \mbox{for Lebesgue almost every }z \in \T,
\end{equation}
in terms of the size of the sequence $\{a_n\}_{n \ge 0}$. Using formula $(2.4.35)$ in Simon \cite{Simonbook1} for polynomials $P = \phi_k$, one  can see that  
$$
(\ov{D_{\mu}(0)^{-1}} D_{\mu}(z)^{-1}, \phi_k)_{L^2(w\,dm)} = \ov{\phi_k(0)}, \qquad k \ge 0. 
$$
Denote by $S$ a subset of $\T$ such that $m(S) = 1$, $\mus(S) = 0$, and let $\chi_{S}$ be the characteristic function of $S$. It is known that $\chi_{S}D_{\mu}^{-1}$ can be approximated in $L^2(\mu)$ by analytic polynomials, see formula $(2.4.34)$ in \cite{Simonbook1}. It follows that $\chi_{S}D_{\mu}^{-1}$ belongs to the closed linear span of the orthonormal sequence $\{\phi_k\}_{k\ge 0}$ in $L^2(\mu)$. Then basic Fourier analysis tells us that
\begin{equation}\label{eq144}
\chi_{S}(z)\ov{D_{\mu}(0)^{-1}}D_{\mu}(z)^{-1} = \sum_{n = 0}^{\infty} \ov{\phi_k(0)}\phi_k(z), \qquad z \in \T,
\end{equation}
in the sense of convergence in $L^2(\mu)$. Moreover, we have 
$$
\ov{\phi_n^*(0)}\phi_n^*(z) - \ov{\phi_n(0)}\phi_n(z)= \sum_{k = 0}^{n-1} \ov{\phi_k(0)}\phi_k(z), \qquad z \in \C, \quad n\ge 1,
$$
see $(2.2.42)$ in \cite{Simonbook1}. From here and the Szeg\H{o} asymptotic relation \eqref{szthm} at $z = 0$ we see that \eqref{convc} holds if and only if the orthonormal series 
\begin{equation}\label{ortho-series}
\sum_{k = 0}^{\infty} c_k \phi_k
\end{equation}
converges pointwise almost everywhere on the unit circle $\T$ for the special choice of Fourier coefficients $c_k = \ov{\phi_k(0)}$. The problem of pointwise convergence of orthogonal series was studied in much detail by Menshov. One of his most striking results~\cite{Menshov26} says that for each $\eps > 0$ and any square-summable sequence of coefficients $\{c_k\}_{k\ge 0}$ the condition  
$$
\sum_{k: \; 0 < |c_k| < 1} |c_k|^2 \left(\log\log\frac{1}{|c_k|}\right)^{2+\eps}\left(\log\frac{1}{|c_k|}\right)^{2} < \infty  
$$
implies the pointwise convergence of \eqref{ortho-series} almost everywhere on $\T$. In particular, the above condition holds if 
$$
\sum_{k\ge 0} |c_k|^p < \infty  
$$
for some $0 < p< 2$. Another theorem of Menshov \cite{Menshov23} (obtained independently by Rademacher \cite{Rad22}) says that condition
$$
\sum_{k \ge 1} |c_k|^2 \log^2 k < \infty, 
$$
is also sufficient for the pointwise convergence of \eqref{ortho-series} almost everywhere on $\T$. Using the fact that the product $\prod_{k = 0}^{\infty}(1 - |a_k|^2)$ converges to a positive number for every $\mu \in \szc$ (this is another version of the Szeg\H{o} theorem, see Theorem 2.7.14 in \cite{Simonbook1}), and the formula 
$$
\phi_{k+1}(0) = -\frac{\ov{a_k}}{\sqrt{\prod_{j=0}^{k}(1-|a_j|^2)}}, \qquad k \ge 0,
$$
from Section 1.5 in  \cite{Simonbook1}, we see from Menshov theorems that any of the assumptions 
\begin{gather}
\sum_{k: \; a_k \neq 0} |a_k|^2 \left(\log\log\frac{1}{|a_k|}\right)^{2+\eps}\left(\log\frac{1}{|a_k|}\right)^{2} < \infty \;\;\mbox{ for some }\;\; \eps > 0, \label{eq141} \\
\sum_{k \ge 0} |a_k|^p< \infty \;\;\mbox{ for some }\;\; 0 < p < 2,\\
\sum_{k \ge 1} |a_k|^2 \log^2 k < \infty, \label{eq143}
\end{gather}
implies \eqref{convc}. Similarly, yet another Menshov result \cite{Menshov26} implies that assumption
$$
\sum_{k \ge 3} |a_k|^2 (\log\log k)^2 < \infty
$$
guarantees the Ces\`aro convergence
$$
\lim_{n \to \infty}\frac{1}{n}\sum_{k=0}^{n-1}\phi_k^*(z) D_{\mu}(z) = 1
$$
almost everywhere on the unit circle $\T$. The reader can find more information on Menshov's works and their subsequent development in the review paper by Ul'yanov~\cite{Ul92}.

\medskip

\subsection{A theorem by Poltoratski and Denisov's counterexample} Theory of orthogonal polynomials on the unit circle (OPUC) is strongly related to the spectral theory of self-adjoint differential operators with simple spectrum. Let us mention two important papers in that field related to our work. In \cite{Polt24},  a nonlinear Carleson theorem for Dirac operators is proved, i.e., a continuous version of the statement  
$$
\lim_{n \to \infty} |\phi^*_n(z) D_{\mu}(z)| = 1 \;\; 
\mbox{for every $\mu \in \szc$ and Lebesgue almost every $z \in \T$.}
$$
An error in \cite{Polt24} was found by Gevorg Mnatsakanyan. A new variant of the proof is avaliable in the form of arXiv preprint \cite{Polt24preprint}.

\medskip

In recent preprints \cite{Den26b}, \cite{Den26}, Denisov provides a counterexample to a strong version of the nonlinear Carleson theorem. He actually constructs \cite{Den26b} a measure $\mu \in \szc$ such that \eqref{convc} fails.

\medskip

\section{Main results}\label{s2}
Consider a signed Borel measure $\mu$ of finite variation on the unit circle $\T$. Let us denote by $\Pp(\mu, z)$ the Poisson integral of $\mu$ at $z \in \D$, given by 
$$
\Pp(\mu, z) = \int_{\T}\frac{1-|z|^2}{|1 - \bar \xi z|^2}\,d\mu (\xi), \qquad z \in \D.
$$
For $u \in L^1(\T)$ and $\mu = u\,dm$, we set $\Pp(u, z) = \Pp(\mu, z)$.  Consider now a probability measure $\mu = w\,dm + \mus$ on $\T$ that belongs to the Szeg\H{o} class $\szc$. Recall that the latter means $\log w \in L^1(\T)$. Our analysis of asymptotic behaviour of orthogonal polynomials will be largely connected with the following entropy function of $\mu$:
$$
\K(\mu, z) = \log\Pp(\mu, z) - \Pp(\log w, z), \qquad z \in \D.
$$
By Jensen inequality, $\K(\mu, z) \ge 0$ for every $z \in \D$. Take $\zeta \in \T$ and consider the Stolz angle with vertex at $\zeta$, i.e., the convex hull 
$$
\Gamma_{\zeta} = \conv\bigl(\{z \in \C:\; |z| \le 1/2\},\, \{\zeta\}\bigr).
$$
Here the constant $1/2$ can be replaced by any constant $r \in (0, 1)$ without essential changes in the forthcoming statements and their proofs. 
Properties of the Poisson kernel imply that for every $\mu \in \szc$ we have
\begin{equation}\label{eq21}
\lim\limits_{\substack{z \to \zeta,\\ z \in \Gamma_\zeta}}\K(\mu, z) = 0
\end{equation}
for Lebesgue almost every $\zeta \in \T$. Sometimes we will also require more regularity from the measure $\mu$ than just the membership in the Szeg\H{o} class $\szc$. Take $\zeta \in \T$ and set $z_{n}(\zeta) = (1-2^{-n})\zeta$ for $n \ge 0$. Note that $\{z_{n}(\zeta)\}$ is a sequence in $\Gamma_\zeta$ approaching the point $\zeta$ exponentially fast. As we will check  (see the discussion after Proposition \ref{p31} below), both conditions
\begin{align}
&\sum_{n \ge 0} \K(\mu, z_n(\zeta)) < \infty, \label{eq22}\\
&\sum_{n \ge 0} \sqrt{\K(\mu, z_n(\zeta))} < \infty, \label{eq23} 
\end{align}
are weaker than the Dini condition \eqref{dini} or its local version on an arc containing~$\zeta$. In particular, assumptions \eqref{eq22}, \eqref{eq23} do not force $w$ to be continuous near $\zeta$. Observe also that \eqref{eq22} and \eqref{eq23} are the discrete versions of conditions
$$
\int_0^1 \frac{\K(\mu, r \zeta)}{1-r}\,dr < \infty , \qquad \int_0^1 \frac{\sqrt{\K (\mu, r \zeta)}}{1-r}\,dr < \infty.
$$
Our main results are the following theorems.
\begin{Thm}\label{thm1}
Let $\mu \in \szc$ and $\zeta \in \T$ be such that $|D_\mu|$ has a non-zero finite non-tangential limit at $\zeta$. If \eqref{eq21} holds at $\zeta$, then
\begin{equation}\label{thm-eq1}
\sup_{z \in \Gamma_\zeta}\Biggl(\frac{1}{n}\sum_{k = 0}^{n-1}\Bigl||\phi_k^*(z) D_{\mu}(z)|^2 - 1\Bigr|\Biggr) \to 0, \qquad n \to \infty.
\end{equation}
In particular, \eqref{thm-eq1} holds for Lebesgue almost every  $\zeta \in \T$ for any $\mu\in\szc$.
\end{Thm}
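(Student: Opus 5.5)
The plan is to reduce \eqref{thm-eq1} to two one-sided statements. Put $f_k(z) = \phi_k^*(z)D_\mu(z)$. Since $\bigl||f|^2-1\bigr| = (|f|^2-1) + 2(1-|f|^2)_+$, it suffices to prove
\[
\text{(a)}\ \ \sup_{z\in\Gamma_\zeta}\Bigl(\frac1n\sum_{k<n}|f_k(z)|^2 - 1\Bigr)_+ \to 0,
\qquad
\text{(b)}\ \ \sup_{z\in\Gamma_\zeta}\frac1n\sum_{k<n}\bigl(1-|f_k(z)|^2\bigr)_+ \to 0 .
\]
On $\Gamma_\zeta\cap\{|z|\le 1-\eps\}$ both statements follow at once from the Szeg\H{o} asymptotics \eqref{szthm}, which hold uniformly on compacts. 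So everything concerns $z\in\Gamma_\zeta$ close to $\zeta$. There, the non-zero finite non-tangential limit of $|D_\mu|$ together with \eqref{eq21} gives
\[
\Pp(\mu,z) = |D_\mu(z)|^2 e^{\K(\mu,z)} = |D_\mu(\zeta)|^2 + o(1)
\]
uniformly in the angle.

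For the upper bound (a) I will use the fact that $\phi_k^*$ has no zeros in $\D$. Hence $\log|f_k|^2$ is the Poisson integral of $\log(|\phi_k^*|^2 w)$, and Jensen's inequality gives
\[
|f_k(z)|^2 \le \Pp\bigl(|\phi_k^*|^2 w, z\bigr) \le \Pp\bigl(|\phi_k^*|^2\,d\mu, z\bigr).
\]
The Cesàro average of the right-hand side is the Poisson integral of the Christoffel-type density $\frac1n\sum_{k<n}|\phi_k^*|^2\,d\mu$, which is a probability measure. I will estimate it in two regimes.
\begin{itemize}
\item When $n(1-|z|)\to\infty$, the Poisson kernel at $z$ is essentially a fixed-scale smoothing. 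I will write it as $|k_z|^2$ with $k_z(\xi) = \sqrt{1-|z|^2}/(1-\bar z\xi)$ and approximate $k_z$ by polynomials of degree much smaller than $n$. Then I will use the extremal property of Christoffel functions together with the reproducing identity $\sum_{k<n}\ov{\phi_k(0)}\phi_k$ from Section 1. The goal is to show that $\frac1n\sum_{k<n}|\phi_k^*|^2\,d\mu$ converges weak-$*$ to $m$, locally uniformly in the relevant sense, so that the Poisson integral tends to $1$.
\item When $n(1-|z|) = O(1)$, I will instead compare with the M\'at\'e--Nevai--Totik scale. For this I will use the extremal problem $\lambda_n(z)=\min\{\int|p|^2d\mu : \deg p<n,\ p(z)=1\}$ with trial polynomials built from truncations of $D_\mu^{-1}$ times $k_z$. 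The error of this comparison should be controlled by $\K(\mu,z)$ and by $|D_\mu(z)|^2 - |D_\mu(\zeta)|^2$.
\end{itemize}

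For the lower bound (b) the key observation is that the Jensen gap is exactly controlled by the entropy. Since $\int|\phi_k^*|^2\,d\mu = 1$ and $\Pp(\mu,z)\approx|D_\mu(z)|^2$, I expect the quantity
\[
\log\Pp\bigl(|\phi_k^*|^2d\mu, z\bigr) - \log|f_k(z)|^2
\]
to be small on Cesàro average once (a) is established. The point is that a large deficit $1-|f_k|^2$ on many indices would force the average $\frac1n\sum_{k<n}|f_k|^2$ below $1$. That contradicts the MNT-type lower bound $\frac1n\sum_{k<n}|f_k(z)|^2\ge 1-o(1)$. I will obtain this lower bound from the same extremal problem, plugging in $p = \phi_n^*$-based test functions and using $\K(\mu,z)\to 0$. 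Combining the average lower bound with the averaged upper bound from (a), via the elementary inequality above, gives (b).

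The main obstacle will be the intermediate regime, where $1-|z|$ is between $1/n$ and a fixed $\eps$. There, neither compact-set convergence nor the pointwise MNT theorem applies directly, and the estimates must be uniform in $z$. I will first try to make the Poisson-smoothing argument quantitative, with error terms of the form $C\,\K(\mu,z) + o_{n(1-|z|)\to\infty}(1)$, using \eqref{eq21} to make $\K$ uniformly small near $\zeta$. The final assertion of the theorem then follows because non-tangential limits of $|D_\mu|$ exist and are nonzero and finite a.e., and \eqref{eq21} holds a.e.
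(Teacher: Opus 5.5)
Your reduction to (a) and (b) through $\bigl||x|^2-1\bigr|=(|x|^2-1)+2(1-|x|^2)_+$ is fine, but your mechanism for (b) does not work. Write $x_k=|\phi_k^*(z)D_\mu(z)|^2$ and $P_k=\Pp(|\phi_k^*|^2d\mu,z)$, so $x_k\le P_k$. Everything you plan to prove is averaged in $k$: an upper bound for $\frac1n\sum_{k<n}P_k$, a lower bound $\frac1n\sum_{k<n}x_k\ge 1-o(1)$, and smallness of the mean Jensen gap $\frac1n\sum_{k<n}(\log P_k-\log x_k)$. These facts do not imply (b), because a deficit $1-x_k$ on many indices can be offset by an excess on others. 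If $x_k=P_k$ alternates between $1/2$ and $3/2$, all three hold exactly, yet $\frac1n\sum_{k<n}(1-x_k)_+\approx 1/4$. Passing from ``the mean is close to $1$'' to ``most terms are close to $1$'' is exactly what Theorem \ref{thm1} adds to \eqref{eq32}, and it needs per-index information. The paper gets this from Khrushchev's formula $\phi_k^*D_\mu=O_k/(1-zb_kf_k)$ (Lemma \ref{l1}), where $f_k$ are the Schur iterates \eqref{eq41}. This gives $\log x_k=\log|O_k(z)|^2-\log|1-zb_k(z)f_k(z)|^2$. By Lemma \ref{lemAv}, the Ces\`aro means of the absolute values of both terms are $O(\eta_n(z)+\tilde\eta_n(z))$. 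Through the entropy splitting \eqref{entf} and the monotonicity \eqref{decayentropy}, these tend to $0$ uniformly on $\Gamma_{\zeta,n}$ (Lemmas \ref{l31}, \ref{l32}). Consequently the indices with $|\log x_k|>1$ are few, and only for them is the averaged bound \eqref{eq45} used (Lemma \ref{lemSt-n}). Your Jensen gap is in fact exactly $\K(|\phi_k^*|^2\mu,z)$, and $|\phi_k^*|^2\mu$ has Schur function $b_kf_k$ (Khrushchev). So making it small also goes through these Schur-function estimates.

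Your treatment of the regime near $\T$ is also misdirected. The extremal problem for $\lambda_n(z)$ describes $k_{\mu,n}(z,z)=\sum_{k<n}|\phi_k(z)|^2$, not $\sum_{k<n}|\phi_k^*(z)|^2$, and inside $\D$ these are genuinely different. Already for $\mu=m$ one has $\phi_k^*\equiv1$, whereas $\frac1n\sum_{k<n}|\phi_k(z)|^2=\frac{1-|z|^{2n}}{n(1-|z|^2)}$ is bounded away from $1$ when $n(1-|z|)\asymp1$ and tends to $0$ when $n(1-|z|)\to\infty$. Trial polynomials only give lower bounds for $k_{\mu,n}(z,z)$. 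So this route yields neither the upper bound in (a) nor the lower bound $1-o(1)$ for the mean of $x_k$. Likewise, weak-$*$ convergence of $\frac1n\sum_{k<n}|\phi_k^*|^2d\mu$ gives nothing uniform for Poisson integrals at scales $1-|z|\to0$. For $|z-\zeta|<1/n$, the paper instead transfers information from $\lambda_n=(1-1/n)\zeta\in\Gamma_{\zeta,n}$. The modified polynomials $\tilde\phi^*_{\lambda_n,k}$ obey the entropy bound \eqref{eq66}, which yields the small-distortion estimate \eqref{eq68} on $B(\zeta,1/n)$ (Lemma \ref{domain-bound}). Then \eqref{eq74} returns to $\phi_k^*$ for indices with $|f_{k-1}(\lambda_n)|\le\varepsilon$, and the remaining indices are few because $\tilde\eta_n(\lambda_n)\to0$ (Lemmas \ref{l36}, \ref{l39}).
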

\begin{Thm}\label{thm2}
Let $\mu \in \szc$ and $\zeta \in \T$ be such that $D_\mu$ has a non-zero finite non-tangential limit at $\zeta$. If \eqref{eq22} holds at $\zeta$, then
\begin{equation}\label{thm-eq2}
\sup_{z \in \Gamma_\zeta}\Biggl(\frac{1}{n}\sum_{k = 0}^{n-1}\bigl|\phi_k^*(z) D_{\mu}(z) - 1\bigr|\Biggr) \to 0, \qquad n \to \infty.
\end{equation}
\end{Thm}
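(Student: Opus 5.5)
Theorem~\ref{thm2} will be derived from Theorem~\ref{thm1} together with a Ces\`aro statement about arguments. Write $g_k = \phi_k^* D_\mu$, which is analytic in $\D$. The identity $|g_k-1|^2 = |g_k|^2 - 1 - 2\Re(g_k - 1)$ combined with Cauchy--Schwarz gives
$$
\frac1n\sum_{k<n}|g_k-1| \le \Bigl(\frac1n\sum_{k<n}\bigl||g_k|^2-1\bigr| + \frac2n\sum_{k<n}|g_k-1|\,\bigl|1-e^{i\arg g_k}\bigr|\Bigr)^{1/2}+\dots
$$
A cleaner route is the bound $|g-1|\le \bigl||g|-1\bigr| + |g|\,|e^{i\arg g}-1|$. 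The modulus term is controlled uniformly on $\Gamma_\zeta$ by Theorem~\ref{thm1}, since $\bigl||g|-1\bigr|\le \bigl||g|^2-1\bigr|$. Theorem~\ref{thm1} also bounds the Ces\`aro means of $|g_k|^2$. By Cauchy--Schwarz it therefore suffices to prove
\begin{equation*}
\sup_{z\in\Gamma_\zeta}\frac1n\sum_{k<n}\bigl|e^{i\arg g_k(z)}-1\bigr|^2\to 0 .
\end{equation*}
Using the bound on $|g_k|$ again, this reduces to showing $\frac1n\sum_{k<n}|\Im \log g_k(z)|\to0$ on the part of $\Gamma_\zeta$ where $|g_k|$ is close to $1$. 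Here we use that $g_k$ has no zeros in $\D$, so $\log g_k$ is defined with $\log g_k(0)$ real.

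For the argument we split $\Gamma_\zeta$ by scale. Fix $N$ and let $z\in\Gamma_\zeta$ with $1-|z|\ge 2^{-N}$, a compact subset of $\D$. There Szeg\H{o}'s theorem \eqref{szthm} gives $g_k\to1$ uniformly, so the Ces\`aro means tend to zero. For $1-|z|\approx 2^{-m}$ with $m>N$ we compare $\log g_k(z)$ with $\log g_k(z_N(\zeta))$. Consider the harmonic function $u_k=\log|g_k|$ and its conjugate $\arg g_k$. The increment of $\arg g_k$ from $z_{j}$ to $z_{j+1}$ (together with the transverse excursion inside the Stolz angle) is bounded by a Harnack-type estimate through the oscillation of $u_k$ on a hyperbolic disk of fixed radius around $z_j$. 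On that disk we control $u_k$ via the inequality $|g_k|^2 \le$ (Christoffel/Poisson comparison) $\Pp(\mu,z)/\exp\Pp(\log w,z)\cdot(\text{local factor})$. The key point is that a positive harmonic majorant of $|g_k|^2$, of the form $e^{\K(\mu,\cdot)}$ times $|D_\mu|^2/\exp\Pp(\log w)$, combined with $\frac1n\sum|g_k|^2\approx1$, yields, through the elementary inequality $x-1-\log x\ge c(\log x)^2$ near $1$, a bound
$$
\frac1n\sum_{k<n}\operatorname{osc}_{B(z_j)}\log|g_k| \lesssim \sqrt{\K(\mu,z_j)}+o_n(1).
$$
Summing over $j$ from $N$ to $m$ would give the tail $\sum_{j\ge N}\sqrt{\K}$, which is condition \eqref{eq23}. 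This is stronger than what Theorem~\ref{thm2} assumes. To use only \eqref{eq22}, we instead keep the squared quantity: apply Cauchy--Schwarz in $k$ but not in $j$. We bound the \emph{mean square} of the total argument increment by $\sum_j \K(\mu,z_j)$, using that increments at different scales are almost orthogonal, since $\arg g_k$ behaves like a martingale along the dyadic radius $z_j$. We then choose $N$ so that $\sum_{j\ge N}\K(\mu,z_j(\zeta))<\eps$.

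The main obstacle is exactly this last step. We must justify an $\ell^2$ (square-function) rather than an $\ell^1$ summation of the scale-by-scale argument increments, and do so uniformly in $k\le n$ after Ces\`aro averaging. My first attempt would be to write $\log g_k(z) = \int \frac{\xi+z}{\xi-z}\log|g_k(\xi)|\,dm$-type Herglotz representations, with the boundary values replaced by the measure-valued object $|\phi_k^*|^2 d\mu$. Its Poisson extension equals $\frac{1-|z|^2}{\dots}$ weighted identity $\Pp(|\phi_k^*|^2\mu,z)=|\phi_k^*(z)|^{2}\cdot$(Bernstein--Szeg\H{o} comparison). From that representation we would extract an orthogonality of dyadic annular pieces, as in the Littlewood--Paley square function.

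Finally, the non-tangential limit of $D_\mu$ at $\zeta$, including its argument, lets us pass from $\arg g_k$ to $\arg\phi_k^*$ consistently along $\Gamma_\zeta$ and closes the argument.
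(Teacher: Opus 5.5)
\textbf{There is a genuine gap: the step that carries the theorem is left open, and the route you sketch would not close it.} Your reduction to Ces\`aro smallness of $|\Im\log g_k(z)|$, $g_k=\phi_k^*D_\mu$, is sound and parallels the paper. The trouble starts with the telescoping. Even granting your per-scale estimate, telescoping the increments of $\arg g_k$ through the oscillation of $\log|g_k|$ costs $\sqrt{\K(\mu,z_j)}$ per dyadic scale. It also adds $o_n(1)$ errors which, summed over up to $\log_2 n$ scales, need not stay small. So this route needs \eqref{eq23}, as you note yourself.

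The proposed upgrade to an $\ell^2$ summation has no mechanism behind it. Here $\zeta$ is fixed and the only averaging is over $k$, and nothing makes the increments at different scales orthogonal in the $k$-average. Littlewood--Paley orthogonality comes from integrating over the boundary point. The values of a harmonic function along a single radius do not form a martingale in any usable sense.

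The missing idea is to split $g_k$ by Khrushchev's formula, $g_k=O_k/(1-zb_kf_k)$ (Lemma~\ref{l1}), before telescoping anything.
\begin{itemize}
\item The factor $(1-zb_kf_k)^{-1}$ needs no propagation across scales. Pointwise, $|\log(1-zb_k(z)f_k(z))|\le\log\frac{1}{1-|f_k(z)|}$, whose Ces\`aro mean is $\etat_n(z)$, small on $\Gamma_{\zeta,n}$ by Lemma~\ref{l32}.
\item Only $\Im\log O_k$, the harmonic conjugate of $\frac12\log(1-|f_k|^2)$, must be telescoped. Because this boundary function is \emph{nonpositive}, its increment from $z_j$ to $z_{j+1}$ is bounded by $\Pp(|\log(1-|f_k|^2)|,z_j)=\K(\mu_k,z_j)-\log(1-|z_jf_k(z_j)|^2)$.
\item Averaging over $k<n$ for $2^j\lesssim n$, \eqref{entf} and \eqref{decayentropy} turn this into $c(\mu,\zeta)\K(\mu,z_j)$. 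This is \emph{linear} in the entropy, so the plain $\ell^1$ telescoping sum is controlled by $\sum_j\K(\mu,z_j)$, which is exactly \eqref{eq22}.
\item Smallness then comes from running the same bound for the shifted measures $\mu_\ell$ and handling the first $\ell$ indices with Lemma~\ref{l00}. This is Lemma~\ref{lemImBis}.
\end{itemize}

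The averaging step uses $1/n\lesssim 1-|z_j|^2$, so it only covers $\Gamma_{\zeta,n}$. The part of $\Gamma_\zeta$ within $1/n$ of $\zeta$ needs a separate transfer from $\lambda_n=(1-1/n)\zeta$, as in Lemma~\ref{uniftruncated2}. That transfer uses three ingredients:
\begin{itemize}
\item the distortion bound of Lemma~\ref{domain-bound} for $\tilde\phi^*_{\lambda_n,k}$;
\item the comparison \eqref{varphi} for indices with $|f_{k-1}(\lambda_n)|\le\varepsilon$ (the remaining indices are few, by $\etat_n(\lambda_n)$);
\item the non-tangential limit of $D_\mu$ itself.
\end{itemize}
Your closing remark about the argument of $D_\mu$ points in this direction but does not supply such a mechanism.
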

\begin{Thm}\label{thm3}
Let $\mu \in \szc$ and $\zeta \in \T$ be such that \eqref{eq23} holds at $\zeta$. Then
\begin{equation}
    \label{noou}
    \sup_{z \in \Gamma_\zeta}|\phi^*_n(z) D_{\mu}(z) -1| = 0, \qquad n \to \infty.
\end{equation}
\end{Thm}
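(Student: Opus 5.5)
The plan is to prove the (intended) convergence $\sup_{z\in\Gamma_\zeta}|\phi_n^*(z)D_\mu(z)-1|\to 0$ by a multiscale argument along the points $z_m=z_m(\zeta)$. The two ingredients are a one-scale estimate that controls $\phi_n^*D_\mu-1$ near $z_m$ by $\sqrt{\K(\mu,z_m)}$, and a telescoping bound that links consecutive dyadic scales, so that the errors add up to the convergent series \eqref{eq23}.

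\textbf{Step 1 (one-scale Hilbert space estimate).} Fix $z\in\D$ and put $f_n=\phi_n^*D_\mu$, an $H^2$ function with $\|f_n\|^2_{L^2(m)}\le\|\phi_n^*\|^2_{L^2(\mu)}=1$. By \eqref{szthm}, $f_n(0)=\phi_n^*(0)D_\mu(0)\to1$. I would compose with the disk automorphism sending $0$ to $z$ and compare $|f_n(z)|^2$ with the Poisson average $\Pp(|\phi_n^*|^2\mu,z)$. Here $\Pp(|\phi_n^*|^2\mu,z)$ is the norm of evaluation at $z$ weighted by $\mu$, and the Christoffel function at $z$ relates this to $\Pp(\mu,z)$ versus $\exp\Pp(\log w,z)=|D_\mu(z)|^2$. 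The aim is an inequality of the form
$$
|f_n(z)-1|^2 \le C\bigl(\K(\mu,z) + \varepsilon_n(z)\bigr),
$$
where $\varepsilon_n(z)$ measures how far the extremal problem at $z$ is from being solved by polynomials of degree $n$. It should be small as soon as $n(1-|z|)\gg1$, because the Christoffel function of $\mu$ at $z$ is then close to its $n=\infty$ value. Concretely, I would write $1-|f_n(z)|^2$ and $\|f_n-\text{const}\|^2$ in terms of the reproducing kernel of $H^2$ at $z$. Jensen's inequality, which is exactly what makes $\K\ge0$, then gives the defect $\K$.

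\textbf{Step 2 (linking scales via the recurrence).} For $2^m\le n<2^{m+1}$, the recurrence \eqref{eqreq} written in the form $\phi_{k+1}^*=(\phi_k^*-a_kz\phi_k)/\sqrt{1-|a_k|^2}$ gives
$$
\phi_n^*(z)-\phi_{N}^*(z) = \text{sum of terms } a_k z\,\phi_k(z)\ \text{(up to factors)}.
$$
At $|z|\le 1-2^{-m}$ this difference is controlled by $\sum_{k\ge 2^m}|a_k||z|^{k}|\phi_k^*(z)|$, since $|\phi_k(z)|\le|\phi_k^*(z)|$ in $\D$. Rather than bounding the recurrence coefficients directly, I would use Step 1 at the scale $z_m$ to get $|f_k(z_m)-f_\infty(z_m)|\lesssim\sqrt{\K(\mu,z_m)}+\varepsilon_k(z_m)$ for all $k\ge 2^m$, where $f_\infty=\lim f_k=1$ in $\D$. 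The same bound extends to a hyperbolic disk around $z_m$ by Harnack's inequality and Cauchy estimates, applied to the analytic function $f_k-1$; this uses $\sup_k\|f_k\|_{H^2}\le1$.

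\textbf{Step 3 (covering the Stolz angle).} $\Gamma_\zeta$ is covered by boundedly many hyperbolic disks around each $z_m$, $m\ge0$. For a fixed $n$, split the sum over $m$ into $m\le\log_2 n-K$, where Steps 1 and 2 give the error $C\sqrt{\K(\mu,z_m)}+o(1)$, and $m>\log_2 n-K$, where $z$ lies within $O(2^K/n)$ of $\zeta$. For the second range I would transport the estimate from the scale $1-|z|\sim1/n$ down to $\zeta$ by the Bernstein-type bound for polynomials of degree $n$. That bound holds on a disk of radius $\sim1/n$ once the values at $z_{\log_2 n}$ are controlled, using $|\phi_n^*|$ subharmonicity and the tail $\sum_{j\ge \log_2 n-K}\sqrt{\K(\mu,z_j)}\to0$ from \eqref{eq23}. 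I expect this last region, points at distance $\lesssim1/n$ from $\zeta$ including $\zeta$ itself, to be the main obstacle. There $\varepsilon_n$ is not small, and smallness must come from summing the square-root defects along the chain $z_j$, $j\ge\log_2 n$. I would try first to show that $\log(\phi_n^*D_\mu)$ changes by at most $C\sqrt{\K(\mu,z_j)}$ between the consecutive points $z_j,z_{j+1}$, uniformly in $n$, and then sum over $j$. This is exactly where the square root in \eqref{eq23}, rather than \eqref{eq22}, is needed.
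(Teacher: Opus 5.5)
\paragraph{The gap is in Step 1.} Steps 2 and 3 both rest on it. The one-scale inequality $|\phi_n^*(z)D_\mu(z)-1|^2\le C(\K(\mu,z)+\varepsilon_n(z))$, with $\varepsilon_n(z)$ small once $n(1-|z|)\gg1$, is false. The Christoffel function only controls $\sum_{k<n}|\phi_k(z)|^2=(|\phi_n^*(z)|^2-|\phi_n(z)|^2)/(1-|z|^2)$, which is a modulus quantity. It does not control $|\phi_n(z)/\phi_n^*(z)|$. More seriously, it does not control the argument of the outer function $\phi_n^*D_\mu$, which equals $\frac12\mathcal{Q}(\log|\phi_n^*D_\mu|^2)(z)$ and depends on all scales between $1-|z|$ and $1$.

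Here is a counterexample.
\begin{itemize}
\item Take a Schur function $f=z^ng$. Then $a_0=\dots=a_{n-1}=0$, so $\phi_n^*\equiv1$, and Lemma~\ref{l1} with index $0$ gives $D_\mu=O/(1-zf)$ with $|O|^2=1-|g|^2$ on $\T$.
\item Choose $g$ outer with $-\log(1-|g|^2)=\delta$ on the one-sided arc $E=\{\zeta e^{i\theta}:\ n^{-1/4}<\theta<\theta_0\}$ and $=\eta$ (tiny) elsewhere. Take $\delta\log n$ a suitable constant.
\item At $z=(1-n^{-1/2})\zeta$ one has $n(1-|z|)=\sqrt n$ and $\K(\mu,z)\lesssim\eta+\delta n^{-1/4}\to0$. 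Also $D_\mu(0)\to1$.
\item The Christoffel function at $z$ is within the factor $e^{\K(\mu,z)}(1-|z|^n)^{-2}$ of its limit $(1-|z|^2)|D_\mu(z)|^2$, so any Christoffel-type defect $\varepsilon_n(z)$ is small.
\item Nevertheless $\Im\log D_\mu(z)=-\frac12\mathcal{Q}(\delta\chi_E)(z)+o(1)\asymp\delta\log n\asymp1$, so $|\phi_n^*(z)D_\mu(z)-1|\gtrsim1$.
\end{itemize}
Consequently the first range in Step 3 is not covered, and there is nothing to transport to the region $|z-\zeta|\lesssim1/n$.

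\paragraph{The closing idea needs a mechanism.} Your final suggestion is to bound the increments of $\log(\phi_n^*D_\mu)$ between $z_j$ and $z_{j+1}$ by $C\sqrt{\K}$ uniformly in $n$. This is where the proof actually lives, but you give no mechanism. For $\log(\phi_n^*D_\mu)$ itself the bound fails at $2^{-j}\sim1/n$, where the degree-$n$ Blaschke product $b_n=\phi_n/\phi_n^*$ changes by $O(1)$.

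The missing ingredients are the following.
\begin{itemize}
\item \textbf{Khrushchev's formula} (Lemma~\ref{l1}): $\phi_n^*D_\mu=O_n/(1-zb_nf_n)$, where $f_n$ is the $n$-th Schur iterate and $|O_n|^2=1-|f_n|^2$ on $\T$. This puts all degree-$n$ oscillation into $b_n$. That is harmless, since $|b_n|\le1$ and $b_n$ only multiplies $f_n$, so no special treatment near $\zeta$ is needed.
\item \textbf{The two estimates.} It then suffices to show $\sup_{\Gamma_\zeta}|f_n|\to0$ and $\sup_{\Gamma_\zeta}|\log O_n|\to0$.
\begin{itemize}
\item For $f_n$, telescope along $z_k$ using $|f_n(z_{k+1})-f_n(z_k)|\lesssim\sqrt{\K(\mu_n,z_k)}$ \cite{BD21}.
\item For $\Im\log O_n$, use the analogous increment bound coming from the Poisson-oscillation estimate for $\log(1-|f_n|^2)$.
\item $\Re\log O_n$ is given exactly by \eqref{entropyschur-n}.
\end{itemize}
\item \textbf{Smallness.} Smallness, not just boundedness, comes from the entropies of the Schur-iterate measures $\mu_n$. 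We have $\K(\mu_n,z)\le\K(\mu,z)$ and $\K(\mu_n,z)\to0$ pointwise, so \eqref{eq23} and dominated convergence give $\sum_k\sqrt{\K(\mu_n,z_k)}\to0$.
\end{itemize}
Your increment bound is true for the factor $\log O_n$. The factor $1-zb_nf_n$, however, must be handled through $\sup_{\Gamma_\zeta}|f_n|$, not through increments.
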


\begin{figure}
\centering
\begin{subfigure}{.5\textwidth}
  \centering
\includegraphics[width=.8\linewidth]{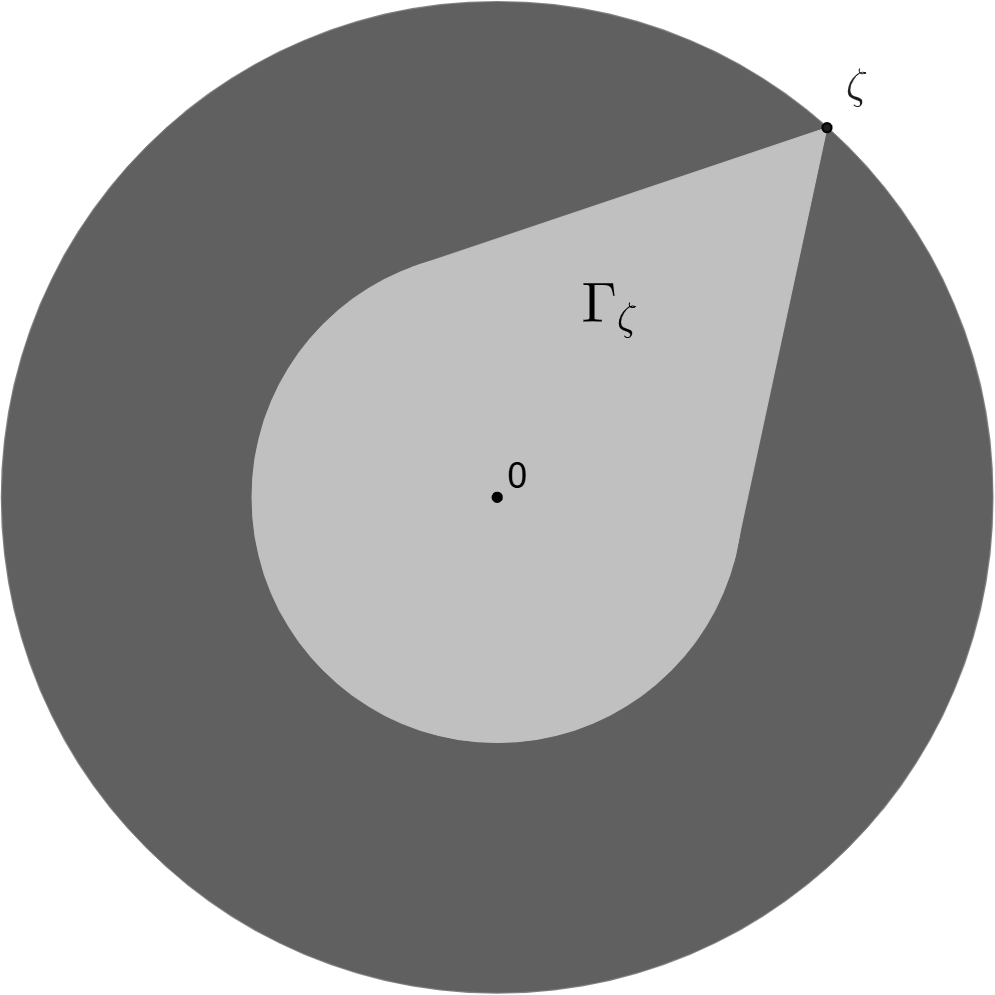}
  \caption{Stolz angle $\Gamma_\zeta$}
  \label{fig:sub1}
\end{subfigure}%
\begin{subfigure}{.5\textwidth}
  \centering
  \includegraphics[width=.8\linewidth]{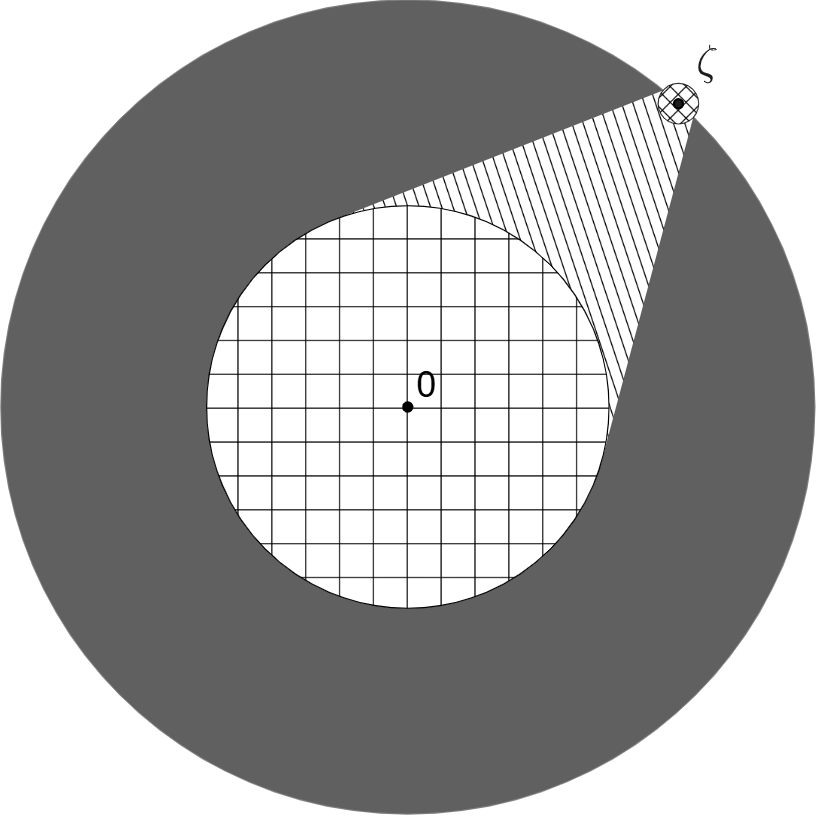}
  \caption{Tiling the domain of convergence}
  \label{fig:sub2}
\end{subfigure}
\caption{Different asymptotic methods work in different regions. On compact subsets of $\D$, the classical Szeg\H{o} argument applies. Near the  unit circle, at disks of radii $\sim 1/n$, one can use M\'at\'e, Nevai, and Totik approach. We prove the uniform Ces\`aro asymptotics in the whole Stolz angle $\Gamma_\zeta$. See also Theorem \ref{thm1-bis} for the convergence in the bigger domain shown in Figure \ref{fig:sub2}.}
\label{fig:main}
\end{figure}

\noindent Let us make some remarks on Theorems \ref{thm1}--\ref{thm3}. First, we would like to note that Theorem \ref{thm1} implies M\'at\'e, Nevai, and Totik theorem \cite{MNT}, see \eqref{eqMNT}. Indeed, the property
\begin{equation}\label{eq31}
\sup_{z \in \Gamma_\zeta}\Biggl(\frac{1}{n}\sum_{k = 0}^{n-1}\Bigl||\phi_k^*(z) D_{\mu}(z)|^2 - 1\Bigr|\Biggr)  \to 0, \qquad n \to \infty,
\end{equation}
is stronger than property 
\begin{equation}\label{eq32}
\frac{1}{n}\sum_{k = 0}^{n-1}|\phi_k^*(\zeta) D_{\mu}(\zeta)|^2 \to 1, \qquad n \to \infty,
\end{equation}
established in \cite{MNT} for almost all $\zeta \in \T$. Besides the fact that \eqref{eq31} holds globally in the Stolz angle $\Gamma_\zeta$, see Figure \ref{fig:main}, there is another important difference between \eqref{eq31}, \eqref{eq32}. Namely, \eqref{eq32} tells us that for $n$ large enough {\it the arithmetic mean} of the sequence $\{|\phi_k^*(\zeta) D_{\mu}(\zeta)|^2\}_{1 \le k \le n}$ is close to $1$, while \eqref{eq31} implies that {\it most of the members} of this sequence are close to $1$. 

\medskip

Our second remark concerns sharpness of Theorem \ref{thm1}. Based on construction in \cite{Den26b}, we conjecture that \eqref{thm-eq1} cannot hold with $|\phi_k^*(z) D_{\mu}(z)|^2$ replaced by $\phi_k^*(z)^2 D_{\mu}(z)^2$ for general measures of Szeg\H{o} class.

\medskip 

Let us now discuss assumptions \eqref{eq22}, \eqref{eq23} in Theorems \ref{thm2}, \ref{thm3}. Without trying to formulate the most general results, we would like to demonstrate which properties of $\mu$ can yield \eqref{eq22}, \eqref{eq23}. 

\begin{Prop}\label{p31}
Assume that $\mu = w\,dm$ is such that $u = \log w$ is bounded on $\T$ and its Fourier coefficients $\hat u(k) = (u, z^k)_{L^2(\T)}$ satisfy 
\begin{equation}\label{eq33}
\sum_{k\ge 1}|\hat u( k)|^2  \log k < \infty.
\end{equation}
Then \eqref{eq22} holds at Lebesgue almost every $\zeta \in \T$.
\end{Prop}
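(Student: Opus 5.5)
The plan is to bound the entropy $\K(\mu,z)$ pointwise by a Poisson variance of $u=\log w$, and then to integrate that variance over $\zeta\in\T$ using Parseval. Since the resulting integral will be finite, $\sum_{n\ge0}\K(\mu,z_n(\zeta))<\infty$ for almost every $\zeta$.

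\emph{Step 1: a pointwise quadratic bound.} Let $|u|\le M$ on $\T$, and for $z\in\D$ write $P_z$ for the probability measure $\frac{1-|z|^2}{|1-\bar\xi z|^2}\,dm(\xi)$. Then $\Pp(\mu,z)=\Pp(e^{u},z)=e^{\Pp(u,z)}\int_\T e^{u-\Pp(u,z)}\,dP_z$. The function $g=u-\Pp(u,z)$ takes values in $[-2M,2M]$, so there is a constant $C=C(M)$ with $e^{x}\le 1+x+Cx^2$ for $|x|\le 2M$. Since $\int g\,dP_z=0$, this gives
$$
\K(\mu,z)=\log\int_\T e^{g}\,dP_z\le \log\Bigl(1+C\int_\T g^2\,dP_z\Bigr)\le C\,V(z),
$$
where
$$
V(z)=\Pp(u^2,z)-\Pp(u,z)^2 .
$$

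\emph{Step 2: averaging over the circle.} Fix $r\in(0,1)$ and set $z=r\zeta$. Then $\Pp(u,r\zeta)=\sum_k\hat u(k)r^{|k|}\zeta^k$, and by Parseval $\int_\T\Pp(u,r\zeta)^2\,dm(\zeta)=\sum_k|\hat u(k)|^2r^{2|k|}$. Since the Poisson kernel integrates to one in $\zeta$, we also have $\int_\T \Pp(u^2,r\zeta)\,dm(\zeta)=\int_\T u^2\,dm=\sum_k|\hat u(k)|^2$. Hence
$$
\int_\T V(r\zeta)\,dm(\zeta)=\sum_{k\ne0}|\hat u(k)|^2\bigl(1-r^{2|k|}\bigr).
$$

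\emph{Step 3: summing over the dyadic radii.} Take $r_n=1-2^{-n}$. Bernoulli's inequality gives $1-r_n^{2k}\le\min(1,2k2^{-n})$, so $\sum_{n\ge0}(1-r_n^{2k})\le C(1+\log k)$ for $k\ge1$. Combining this with Steps 1 and 2 and monotone convergence,
$$
\int_\T\sum_{n\ge0}\K(\mu,z_n(\zeta))\,dm(\zeta)\le C\sum_{k\ne0}|\hat u(k)|^2(1+\log|k|) .
$$
Because $u$ is real, $|\hat u(-k)|=|\hat u(k)|$, so the right-hand side is finite by \eqref{eq33} together with $u\in L^2$. Therefore the sum $\sum_{n\ge0}\K(\mu,z_n(\zeta))$ is finite for almost every $\zeta$, which is \eqref{eq22}.

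The only delicate point is Step 1, and specifically the passage from the entropy to a variance. This is where boundedness of $u$ is needed: it keeps $g$ in a fixed compact interval, so the exponential is controlled by a quadratic with a uniform constant. The remaining steps are direct Fourier computations.
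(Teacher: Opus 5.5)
Your proof is correct and complete, and it takes a genuinely different route from the paper.

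The paper's route is as follows. It majorizes $\K(\mu,r\zeta)$ by $\Pp\bigl((u-u(\zeta))^2,r\zeta\bigr)$, centering at the boundary value $u(\zeta)$ as in the proof of Proposition~\ref{p11}. It then integrates $\K(\mu,z)/(1-|z|^2)$ over the whole disc and uses $\int_0^1|\xi-r\zeta|^{-2}\,dr\lesssim|\xi-\zeta|^{-1}$ to reduce to the double integral $\int_\T\int_\T(u(\xi)-u(\zeta))^2|\xi-\zeta|^{-1}\,dm(\xi)\,dm(\zeta)$. Parseval evaluates this as $\asymp\sum_k|\hat u(k)|^2\log(2+|k|)$. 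Finally, Fubini over Stolz angles gives $\int_{\Gamma_\zeta}\K(\mu,z)(1-|z|^2)^{-2}\,dA(z)<\infty$ for a.e.\ $\zeta$, from which \eqref{eq22} is inferred.

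You instead center at $\Pp(u,z)$, so your majorant is the Poisson variance $V(z)=\min_c\Pp((u-c)^2,z)$, which is sharper than the paper's. You compute $\int_\T V(r\zeta)\,dm(\zeta)$ exactly at each fixed radius and sum directly over the dyadic radii. The logarithm therefore comes from $\sum_n(1-r_n^{2k})\asymp 1+\log k$ rather than from $\int_0^1|1-e^{2\pi ikh}|^2|1-e^{2\pi ih}|^{-1}\,dh$.

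What your version buys:
\begin{itemize}
\item It is shorter and uses no area integrals or Lebesgue-point values of $u$.
\item It gives the stronger conclusion that $\zeta\mapsto\sum_n\K(\mu,z_n(\zeta))$ is integrable on $\T$.
\item It avoids passing from the area integral over $\Gamma_\zeta$ to the discrete points $z_n(\zeta)$, which the paper does not spell out. That step needs a Harnack-type comparability of $\K$ on hyperbolic discs. It does hold for bounded $u$, because the Taylor inequality in your Step~1 can be reversed to give $\K\asymp V$, and $V$ is comparable at points of bounded hyperbolic distance. But you never need it.
\end{itemize}

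What the paper's version buys in exchange: it presents \eqref{eq33} as the averaged form of the pointwise condition \eqref{eq34} in Proposition~\ref{p21}. Its Fubini step also yields nontangential area control over $\Gamma_\zeta$, not just control along the radius.
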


\begin{Prop}\label{p21}
Let $\mu = w\,dm$ be a probability measure in $\szc$ and let $\zeta \in \T$. Assume that $\log w$ is bounded at a neighborhood of $\zeta$ and 
\begin{equation}\label{eq34}
\int_{\T}\frac{|\log w(\xi) - \log w(\zeta)|^2}{|\xi - \zeta|}\,dm(\xi) < \infty.
\end{equation}
Then \eqref{eq22} holds at $\zeta$. 
\end{Prop}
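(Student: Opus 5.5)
The plan is to bound $\K(\mu, z_n(\zeta))$ pointwise by the Poisson integral of $|\log w - \log w(\zeta)|^2$, up to an exponentially small error. The sum over $n$ is then controlled by summing the Poisson kernels along the radius, which produces exactly the kernel $1/|\xi-\zeta|$ in \eqref{eq34}.

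\emph{Step 1: normalization and splitting.} Set $u = \log w$, $c = u(\zeta)$ and $v = u - c$. Since $\Pp(1,z) = 1$, we have
$$
\K(\mu, z) = \log \Pp(e^{v}, z) - \Pp(v, z),
$$
so $\K$ is unchanged by this normalization. Choose an open arc $I \ni \zeta$ on which $|v| \le M$. For $z = r\zeta$ the Poisson kernel satisfies $\frac{1-|z|^2}{|1-\bar\xi z|^2} \le C_I(1-r)$ for $\xi \in \T\setminus I$. Also $e^{v} \in L^1(\T)$ and $v \in L^1(\T)$, because $\mu \in \szc$. Hence the contributions of $\T\setminus I$ to both $\Pp(e^v, z_n)$ and $\Pp(v, z_n)$ are bounded in absolute value by $C 2^{-n}$.

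\emph{Step 2: the elementary inequality.} On $[-M, M]$ we have $e^{x} \le 1 + x + C_M x^2$. Integrating against the Poisson kernel over $I$ and using $\int_I P_z\,dm \le 1$ gives
$$
\Pp(e^v, z_n) \le 1 + \Pp(v\chi_I, z_n) + C_M\Pp(v^2\chi_I, z_n) + C2^{-n}.
$$
Now apply $\log(1+x) \le x$ (the argument of $\log$ is positive, so this is legitimate). This yields
$$
\K(\mu, z_n) \le C_M\Pp(v^2\chi_I, z_n) + C'2^{-n}.
$$
Here we used that $\Pp(v, z_n)$ and $\Pp(v\chi_I, z_n)$ differ by at most $C2^{-n}$.

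\emph{Step 3: summing the kernels.} By Tonelli,
$$
\sum_n \Pp(v^2\chi_I, z_n) = \int_I v^2(\xi) \sum_n \frac{1-r_n^2}{|\xi - r_n\zeta|^2}\,dm(\xi), \qquad r_n = 1-2^{-n}.
$$
Put $t = |\xi-\zeta|$. The geometric fact $|\xi - r\zeta| \gtrsim \max(1-r,\, t)$ bounds the $n$-th term by $C\,2^{-n}/\max(2^{-n}, t)^2$. The terms with $2^{-n} \ge t$ sum to at most $C\sum_{2^{-n}\ge t} 2^{n} \le C/t$. The terms with $2^{-n} < t$ sum to at most $C t^{-2}\sum_{2^{-n}<t}2^{-n} \le C/t$. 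Therefore
$$
\sum_n \K(\mu, z_n(\zeta)) \le C\int_{\T}\frac{|\log w(\xi)-\log w(\zeta)|^2}{|\xi-\zeta|}\,dm(\xi) + C < \infty,
$$
which is \eqref{eq22}.

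The only delicate point is Step 1, namely making sure that the possible unboundedness of $w$ away from $I$ costs only $O(2^{-n})$. This is handled by the decay of the Poisson kernel away from $\zeta$ along the radius together with $w, \log w \in L^1(\T)$. The remaining steps are routine.
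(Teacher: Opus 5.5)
Your proof is correct and follows the same route as the paper. Both bound the entropy at $z_n(\zeta)$ by the Poisson integral of $(\log w-\log w(\zeta))^2$, using a second-order Taylor bound for the exponential. Both then sum the Poisson kernels along $z_n(\zeta)$ to obtain the kernel $1/|\xi-\zeta|$. The one difference is that you make explicit the $O(2^{-n})$ localization error coming from the part of $\mathbb{T}$ away from $\zeta$; the paper skips this by declaring the condition local and assuming $\log w$ bounded on all of $\mathbb{T}$.
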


\begin{Prop}\label{p11}
Let $\mu = w\,dm$ be a probability measure in $\szc$, let $\zeta \in \T$ and $I_n =  \{\xi \in \T :|\xi - \zeta| < 2^{-n} \}$, $n\ge 0$. Assume that $\log w$ is bounded in a neighborhood of $\zeta$ and 
\begin{equation}\label{eq341}
\sum_{n=1}^\infty \left( \frac{1}{|I_n|}\int_{I_n}\bigl|\log w - \log w(\zeta)\bigr|^2 \,dm \right)^{1/2}  < \infty . 
\end{equation}
Then \eqref{eq23} holds at $\zeta$.
\end{Prop}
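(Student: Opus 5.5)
The plan is to bound $\K(\mu, z_n(\zeta))$ by a Poisson average of $|\log w - \log w(\zeta)|^2$, and then to sum the square roots using the dyadic structure of the Poisson kernel. Put $u = \log w$, $c = u(\zeta)$, $v = u - c$. Since $\mu = w\,dm$ is absolutely continuous, $\Pp(\mu,z) = e^{c}\,\Pp(e^{v}, z)$ and $\Pp(\log w, z) = c + \Pp(v,z)$. Hence
$$
\K(\mu,z) = \log \Pp(e^v, z) - \Pp(v,z) = \log\bigl(1 + \Pp(e^v - 1, z)\bigr) - \Pp(v,z) \le \Pp(e^v - 1 - v, z),
$$
where we used $\log(1+x) \le x$ for $x > -1$. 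Note that $\Pp(e^v,z) > 0$, so this applies. The integrand $g = e^v - 1 - v$ is nonnegative and integrable, since $g \le e^{-c} w + 1 + |v|$ and $w, \log w \in L^1(\T)$.

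Next we split $g$ near and away from $\zeta$. Choose $N$ such that $|v| \le M$ on $I_N$; this is possible because $\log w$ is bounded near $\zeta$. On $I_N$ the elementary inequality $e^t - 1 - t \le C_M t^2$ for $|t| \le M$ gives $g \le C_M v^2$. Off $I_N$, the Poisson kernel at $z_n = z_n(\zeta)$ is $O(2^{-n})$, with the constant depending on $N$. So
$$
\K(\mu, z_n) \le C_M\, \Pp(v^2\chi_{I_N}, z_n) + C\,2^{-n}\|g\|_{L^1(\T)}.
$$
For the first term we use the standard dyadic majorant of the Poisson kernel at $z_n = (1-2^{-n})\zeta$:
$$
\frac{1-|z_n|^2}{|1-\bar\xi z_n|^2} \le C\sum_{j=0}^{n} 2^{-(n-j)}\frac{\chi_{I_j}(\xi)}{|I_j|}.
$$
This holds because on $I_{j}\setminus I_{j+1}$ the kernel is $\asymp 2^{-n}/2^{-2j}$, which is $\asymp 2^{-(n-j)}/|I_j|$, and on $I_n$ it is $\asymp 2^n$. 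With $a_j^2 = |I_j|^{-1}\int_{I_j}|v|^2\,dm$ for $j \ge N$ (and the analogous quantity for $v\chi_{I_N}$ when $j < N$, which is bounded by $CM^2$), this yields
$$
\K(\mu,z_n) \le C\sum_{j \le n} 2^{-(n-j)} a_j^2 + C 2^{-n}.
$$

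Finally, subadditivity of the square root gives
$$
\sqrt{\K(\mu,z_n)} \le C\sum_{j\le n} 2^{-(n-j)/2} a_j + C 2^{-n/2}.
$$
Summing over $n$ and exchanging the order of summation,
$$
\sum_n \sqrt{\K(\mu,z_n)} \le C\sum_j a_j \sum_{n \ge j} 2^{-(n-j)/2} + C' = C''\sum_j a_j + C' < \infty,
$$
by \eqref{eq341}; the finitely many terms with $j < N$ are harmless. This establishes \eqref{eq23} at $\zeta$.

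The only step needing care is the dyadic Poisson-kernel majorant, together with the bookkeeping separating the bounded region $I_N$ from the rest of $\T$. There the unboundedness of $\log w$ is absorbed by the crude bound $g \in L^1(\T)$ combined with the $O(2^{-n})$ size of the kernel far from $\zeta$. Everything else is elementary.
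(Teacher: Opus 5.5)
Your proposal is correct and follows essentially the same route as the paper: you bound $\K(\mu,z)$ by a Poisson average of $|\log w-\log w(\zeta)|^2$, dominate the Poisson kernel at $z_n(\zeta)$ dyadically, take square roots termwise, and exchange the order of summation. Your inequality $\K(\mu,z)\le \Pp(e^{v}-1-v,z)$, combined with the $O(2^{-n})$ tail bound off $I_N$, is a clean and rigorous version of the paper's brief claim that the condition is local. One small slip is harmless: your dyadic majorant as written fails off $I_0$, where its right-hand side vanishes, but you only apply it to functions supported in $I_N\subset I_0$.
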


Theorem \ref{thm3} and Proposition \ref{p11} imply the classical result of Grenander and Szeg\H{o} and its improvement by Badkov (see Section \ref{s11}). Indeed, for $t = |\xi - \zeta|$ we have
$$
|\log w(\xi) - \log w(\zeta)| \lesssim |w(\xi) - w(\zeta)| \leq \sup_{|z_1 - z_2| \le t} |w(z_1) - w(z_2)| = \omega(t)  
$$
if $w$ is positive and continuous near $\zeta$ and $t$ is small enough (as usual, we write $g_1 \lesssim g_2$ if $g_1 \le c g_2$ for a constant $c$ that does not depend on the parameters under consideration, i.e., in the above case, on $t$). Therefore, \eqref{eq341} is weaker than the classical Dini condition \eqref{dini}:
\begin{gather*}
\sum_{n=1}^\infty \left( \frac{1}{|I_n|}\int_{I_n}\bigl|\log w - \log w(\zeta)\bigr|^2 \,dm \right)^{1/2} \lesssim \sum_{n=1}^\infty \omega(|I_n|) \lesssim \\ \lesssim \sum_{n=1}^\infty \int_{2^{-n}}^{2^{-n+1}}\frac{\omega(t)}{t}\,dt \le \int_{0}^{1}\frac{\omega(t)}{t}\,dt < \infty,
\end{gather*}  
or its local version on an arc. Since our estimates for the remainders are qualitative, Theorem \ref{thm3} and Proposition \ref{p11} imply the uniform convergence $\phi^*_n D_{\mu} \to 1$ on any open arc of $\T$ where $w$ is positive and satisfies the Dini condition. 

\medskip

We would like to emphasize that assumptions in Propositions \ref{p31}-\ref{p11} (or, more generally, assumptions \eqref{eq21}--\eqref{eq23}) are given directly in terms of the measure $\mu$. Therefore, they are much easier to check than assumptions \eqref{eq141}--\eqref{eq143} in terms of recurrence coefficients when one starts from the orthogonality measure $\mu$. On the other hand, if one starts with recurrence coefficients of $\mu$ (such a situation occurs in direct spectral theory of differential operators with simple spectrum), then assumptions \eqref{eq141}--\eqref{eq143} are much easier than \eqref{eq21}--\eqref{eq23}. From this perspective,  \eqref{eq21}--\eqref{eq23} and \eqref{eq141}--\eqref{eq143}  complement each other. 

\medskip

Next, we apply our results to the study of the asymptotic behaviour of the polynomial reproducing kernels 
\begin{equation}\label{reprokernel}
k_{\mu, n}(z_1, z_2) = \frac{\ov{\phi_n^*(z_2)}\phi_n^*(z_1) - \ov{\phi_n(z_2)}\phi_n(z_1)}{1 - \ov{z_2} z_1} 
\end{equation}
generated by a measure $\mu \in \szc$. It is well-known that these kernels have the following universal asymptotic behaviour
\begin{gather}
k_{\mu, n}(z_1, z_2) = |D_{\mu}^{-1}(\zeta)|^2 \frac{1 - \bar z_2^{n} z_1^{n}}{1 - \bar z_2 z_1}\bigl(1 + r_n(z_1, z_2)\bigr), \label{universality0} \\ 
\lim_{n \to \infty}\sup \{ \bigl|r_n(z_1, z_2)\bigl| \colon z_{1,2} \in \mathbb{C},\, |z_{1,2} - \zeta| \le A/n\} = 0, \label{universality}
\end{gather}
for every $A \ge 0$ and almost every point $\zeta \in \T$. Moreover, a bound for the rate of convergence in \eqref{universality} is known, see \cite{Bes24}. Let us write $g_1 \asymp g_2$ for two functions $g_1$, $g_2$ if $g_1 \lesssim g_2$ and $g_2 \lesssim g_1$. Recall that the hyperbolic distance in $\D$ is defined by
\begin{equation}\label{eqHD}
d_{H}(z_1, z_2) = \frac{1}{2}\log\frac{1+\rho(z_1, z_2)^2}{1-\rho(z_1, z_2)^2},  \qquad \rho(z_1, z_2) = \left|\frac{z_1 - z_2}{1 - \ov{z_2}z_1}\right|.
\end{equation}
Note that $d_{H}(z_1, z_2) \lesssim 1$ for two points $z_1$, $z_2$ in some Stolz angle $\Gamma_{\zeta}$ if and only if $1 - |z_1| \asymp 1 - |z_2|$. We prove the following extension of the local universality relation \eqref{universality0}.
\begin{Thm}\label{thm4}
Let $\mu \in \szc$ and $\zeta \in \T$ be such that $D_\mu$ has a non-zero finite non-tangential limit at $\zeta$. Take $A \ge 0$. Define the function $r_n$ in $\D \times \D$ by
\begin{equation}\label{universality0-thm4}
k_{\mu, n}(z_1, z_2) = \ov{D_{\mu}^{-1}(z_2)}D_{\mu}^{-1}(z_1) \frac{1 - \bar z_2^{n} z_1^{n}}{1 - \bar z_2 z_1}\bigl(1 + r_n(z_1, z_2)\bigr).
\end{equation}
We have
\begin{align}
\lim_{n \to \infty} \sup_{\substack{z_1, z_2 \in \Gamma_{\zeta},\\ d_H(z_1, z_2) \le A}}\bigl|r_n(z_1, z_2)\bigl| = 0, \quad 
&\mbox{if \eqref{eq21} holds,} \label{universality-thm4}\\   
\lim_{n \to \infty}\sup_{z_1, z_2 \in \Gamma_{\zeta}}\frac{1}{n}\sum_{k=0}^{n-1}\bigl|r_k(z_1, z_2)\bigl|= 0, \quad 
&\mbox{if \eqref{eq22} holds,} \label{universality-thm4-2}\\
\lim_{n \to \infty} \sup_{z_1, z_2 \in \Gamma_{\zeta}}\bigl|r_n(z_1, z_2)\bigl| = 0, \quad 
&\mbox{if \eqref{eq23} holds.} \label{universality-thm4-3}
\end{align}
In particular, we have \eqref{universality-thm4} for almost every $\zeta \in \T$ for any measure $\mu\in\szc$.
\end{Thm}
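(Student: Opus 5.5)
We derive Theorem \ref{thm4} from Theorems \ref{thm1}--\ref{thm3} by means of the Christoffel--Darboux structure of $k_{\mu,n}$. Set $F_k(z) = \phi_k^*(z)D_\mu(z)$ and $G_k(z) = \phi_k(z)D_\mu(z)$. Dividing \eqref{reprokernel} by $\ov{D_\mu^{-1}(z_2)}D_\mu^{-1}(z_1)$ gives
$$
\bigl(1 - \bar z_2 z_1\bigr)\ov{D_\mu(z_2)}D_\mu(z_1)k_{\mu,n}(z_1,z_2) = \ov{F_n(z_2)}F_n(z_1) - \ov{G_n(z_2)}G_n(z_1).
$$
The model kernel has numerator $1 - \bar z_2^n z_1^n$. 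Hence it suffices to show two things. First, $F_n(z_j) \approx 1$. Second, $G_n(z_j) \approx z_j^n\cdot(\text{something of modulus} \approx 1)$, in such a way that the errors are small compared with $|1 - \bar z_2^n z_1^n|$.

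The second point is where the hyperbolic restriction $d_H \le A$ enters. For $z_1,z_2 \in \Gamma_\zeta$ with $1-|z_j| \asymp 1-|z_1|$, we use $|1-\bar z_2 z_1| \gtrsim 1-|z_1|$ to get
$$
\Bigl|\frac{1-\bar z_2^n z_1^n}{1-\bar z_2 z_1}\Bigr| \gtrsim \min\bigl(n, (1-|z_1|)^{-1}\bigr).
$$
This is the usual Christoffel function scale, and in this regime the model kernel is never degenerate relative to its size. It also follows that $|G_n(z_j)| \le |z_j|^n |F_n(z_j)|$, because $|\phi_n|\le|\phi_n^*|$ in $\D$.

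Step one is the diagonal case. We use the identity $k_{\mu,n}(z,z) = \sum_{k<n}|\phi_k(z)|^2$, and the recursion can be arranged to express it through $|\phi_k^*|^2$. Theorem \ref{thm1} already gives Ces\`aro control of $|F_k|^2$, but \eqref{universality-thm4} is a non-averaged statement. We therefore expect the paper's underlying mechanism, namely the entropy $\K(\mu,z)$ controlling $\bigl\||\phi_n^*D_\mu|^2 - 1\bigr\|$ in a weighted sense, to yield the following: for $n(1-|z|) \gtrsim 1$ we have $|F_n(z)| \to 1$ and $F_n \to 1$ uniformly, by Szeg\H{o}'s theorem applied on the hyperbolic disc around $z$ after a conformal renormalization (a Möbius change of variables moving $z$ to $0$ keeps $\mu$ in $\szc$ with entropy $\K(\mu,z)$). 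For $n(1-|z|) \lesssim 1$ we are in the local MNT/universality regime, where \eqref{universality} holds. The non-tangential limit of $D_\mu$ lets us replace $D_\mu(\zeta)$ by $D_\mu(z_j)$ in that regime.

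Step two is the off-diagonal case with $d_H \le A$. The kernel $(z_1,z_2) \mapsto k_{\mu,n}(z_1,z_2)/\text{model}$ is analytic in $z_1$ and antianalytic in $z_2$. We normalize by the diagonal values and apply the Cauchy--Schwarz bound $|k(z_1,z_2)|^2 \le k(z_1,z_1)k(z_2,z_2)$. Near equality in Cauchy--Schwarz within a fixed hyperbolic neighbourhood then forces $r_n \to 0$; this is the standard ``diagonal implies off-diagonal'' argument of Lubinsky type, transplanted to the Möbius-normalized picture.

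For \eqref{universality-thm4-2} and \eqref{universality-thm4-3} the restriction on $d_H$ is dropped. We plug in Theorem \ref{thm2} ($F_k \to 1$ in Ces\`aro mean, uniformly on $\Gamma_\zeta$) and Theorem \ref{thm3} ($F_n \to 1$ uniformly on $\Gamma_\zeta$). For $G_n$ we use $G_n(z) = z^n\ov{F_n(1/\bar z)}\cdot(\ldots)$; more practically, we write the difference between the kernel and the model as
$$
\frac{\ov{(F_n(z_2)-1)}F_n(z_1) + (F_n(z_1)-1) - \bigl(\ov{G_n(z_2)}G_n(z_1) - \bar z_2^nz_1^n\bigr)}{1-\bar z_2 z_1}.
$$
Then $G_n(z)/z^n$ is controlled through the identity $\phi_n(z)=z^n\ov{\phi_n^*(1/\bar z)}$ together with the boundary values of $F_n$ on $\T$ near $\zeta$, obtained via the Poisson representation on the Stolz region.

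The main obstacle is a division problem. When $z_1,z_2$ are hyperbolically far apart, $|1-\bar z_2 z_1|$ can be as large as $\asymp 1-|z_1|+1-|z_2|$ while both numerators are small. Relative errors then require $|F_n-1|$ to be small at a rate beating the ratio $|1-\bar z_2^nz_1^n|$. We would first try to use the maximum principle in the variable $z_1$ on the Stolz angle for the analytic function $(1-\bar z_2 z_1)\,r_n(z_1,z_2)$, bounding it by its values on the boundary of the region.
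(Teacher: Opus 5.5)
Your proposal has a genuine gap already in the diagonal step for \eqref{universality-thm4}. There you need, under \eqref{eq21} alone, the non-averaged asymptotics $F_n(z)\to 1$ (or at least $|F_n(z)|\to 1$) for $n(1-|z|)\gtrsim 1$. Nothing provides this. The paper gets uniform convergence of $\phi_n^*D_\mu$ on $\Gamma_\zeta$ only under \eqref{eq23} (Theorem \ref{thm3}). Under \eqref{eq21}, the Schur iterates $f_k$ in $F_k=O_k/(1-zb_kf_k)$ are small only in Ces\`aro mean (Lemma \ref{l32}).

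In fact $k_{\mu,n+1}(z,0)=\phi_n^*(0)\phi_n^*(z)$, so $1+r_{n+1}(z,0)=\phi_n^*(0)D_\mu(0)\,F_n(z)$. Your claim is therefore a special case of the unrestricted statement \eqref{universality-thm4-3}, taken at unbounded hyperbolic distance $d_H(z,0)$. That is more than \eqref{universality-thm4} asserts.

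The M\"obius renormalization cannot supply it, because the M\"obius map does not preserve $\mathcal{P}_n$. Szeg\H{o}'s theorem for the transported measure concerns rational functions with a pole at $1/\bar z$, not $\phi_n^*$.

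The missing idea is the variational argument of Lemma \ref{l51}. Put $k_{\mu,\infty}(z_1,z_2)=\ov{D_\mu^{-1}(z_2)}D_\mu^{-1}(z_1)/(1-\bar z_2z_1)$. From above, $k_{\mu,n}(z,z)\le k_{\mu,\infty}(z,z)$. From below, inserting $p(\xi)=(1-\bar z^n\xi^n)/(1-\bar z\xi)$ into the Christoffel extremal problem gives $k_{\mu,n}(z,z)\ge(1+o(1))\bigl((1-|z|^2)\Pp(\mu,z)\bigr)^{-1}$ wherever $|z|^n\to 0$. Since $\Pp(\mu,z)=|D_\mu(z)|^2e^{\K(\mu,z)}$, condition \eqref{eq21} closes the gap.

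Near $\T$ one uses the quantitative Theorem 1.1 of \cite{Bes24} at the given $\zeta$, not \eqref{universality}, which holds only almost everywhere. It is applied on $1-|z|\le\delta_n/n$ with $\delta_n\to\infty$ slowly, so that the two regimes overlap. A fixed split at $n(1-|z|)\asymp 1$ does not achieve this.

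For the off-diagonal step, $|k(z_1,z_2)|^2\le k(z_1,z_1)k(z_2,z_2)$ only bounds $|k_{\mu,n}|$ from above; it cannot force closeness to the model. What works (Lemma \ref{7p2}) is Cauchy--Schwarz for the nonnegative tail kernel $k_{\mu,\infty}-k_{\mu,n}=\sum_{k\ge n}\phi_k(z_1)\ov{\phi_k(z_2)}$. Its diagonal is relatively small by the diagonal step. Combine this with $\sqrt{(1-|z_1|^2)(1-|z_2|^2)}\asymp|1-\bar z_2z_1|$ for $d_H(z_1,z_2)\le A$; this is exactly where the hyperbolic restriction enters.

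For \eqref{universality-thm4-2} and \eqref{universality-thm4-3}, your inequality $|G_n(z)|\le|z|^n|F_n(z)|$ is false. The function $b_n=\phi_n/\phi_n^*$ is a Blaschke product with the zeros of $\phi_n$, not $z^n$; for instance $b_n(0)=-\ov{f_{n-1}(0)}$. Only $|b_n|\le 1$ holds. Controlling $b_n$ is the real work. Boundary values of $F_n$ on $\T$ near $\zeta$ are not available either: that is essentially the boundary convergence \eqref{convc}, which can fail in $\szc$.

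The paper compares $b_n$ with $\tilde b_n=\tilde\phi_{z,n}/\tilde\phi^*_{z,n}$. The entropy bound \eqref{eq66} and Lemma 2.5 of \cite{Bes24} give $|\tilde b_n(z)-\alpha_zz^n|^2\lesssim e^{\K(\mu,z)}-1$ with $|\alpha_z|=1$. Also $|\tilde b_n(z)-b_n(z)|\lesssim|f_{n-1}(0)|+|f_{n-1}(z)|$ when both are at most $1/2$.
- Under \eqref{eq21}, this yields $\frac1n\sum_{k<n}|b_k(z)|\to 0$ where $n(1-|z|)\ge\delta_n$ (Lemma \ref{7p3}).
- Under \eqref{eq23}, Lemma \ref{lemChain} makes $f_{n-1}$ uniformly small on $\Gamma_\zeta$. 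Theorem \ref{thm3}, with the non-tangential limit of $D_\mu$, makes $\alpha_z$ nearly constant. Hence $b_n(z_1)\ov{b_n(z_2)}\approx\bar z_2^nz_1^n$ uniformly.

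Your "division problem" is also misdiagnosed. From $k_{\mu,n}=\phi_n^*(z_1)\ov{\phi_n^*(z_2)}(1-b_n(z_1)\ov{b_n(z_2)})/(1-\bar z_2z_1)$, the factor $1-\bar z_2z_1$ cancels exactly:
\[
r_n(z_1,z_2)=\frac{\bigl(F_n(z_1)\ov{F_n(z_2)}-1\bigr)\bigl(1-b_n(z_1)\ov{b_n(z_2)}\bigr)+\bar z_2^nz_1^n-b_n(z_1)\ov{b_n(z_2)}}{1-\bar z_2^nz_1^n}.
\]
The denominator is $\asymp 1$ as soon as one point has $n(1-|z_j|)\ge\delta_n$, whatever $d_H(z_1,z_2)$ is. When both points lie within $\delta_n/n$ of $\T$, one again invokes \cite{Bes24}.

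For the Ces\`aro statement you also need Lemma \ref{l-prod}, i.e.\ Theorems \ref{thm1} and \ref{thm2} together, to pass to the products $F_k(z_1)\ov{F_k(z_2)}$. This is followed by a dyadic summation in $k$. A maximum principle on $\Gamma_\zeta$ would not replace any of this, since $\partial\Gamma_\zeta$ reaches $\zeta$, where boundary control is exactly what is missing.
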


\section{Preliminaries}
In this section we recall the basic theory of orthogonal polynomials and collect some results from \cite{Bes24}. Given a probability measure $\mu$ on $\T$, its Schur function $f$ is defined by the relation 
\begin{equation}\label{eq1}
\frac{1+zf(z)}{1-zf(z)} = \int_{\T}\frac{1 + \bar \xi z}{1 - \bar \xi z}\,d\mu (\xi), \qquad z \in \D.
\end{equation}
Taking the real part in \eqref{eq1}, we get
\begin{equation}\label{eq55}
\frac{1 - |zf(z)|^2}{|1 - zf(z)|^2} = \int_{\T}\frac{1 - |z|^2}{|1 - \bar\xi z|^2}\,d\mu (\xi) = \Pp(\mu, z), \qquad z \in \D.
\end{equation}
Since expressions in \eqref{eq55} are positive in $\D$, we have $|zf(z)| < 1$ for every $z \in \D$. In other words, $zf$ is an analytic mapping from $\D$ into itself. By Schwarz lemma and the maximal modulus principle, the same is true about $f$ (with the exceptional case when $f$ is a unimodular constant, it corresponds to the case where $\mu$ is the point mass measure concentrated at a point of $\T$). Conversely, any analytic mapping $f$ from $\D$ into itself leads to a probability measure $\mu$ on $\T$ defined by formula \eqref{eq1}. It is not difficult to check that in this correspondence the finite Blaschke products generate probability measures $\mu$ supported on finite subsets of~$\T$. In what follows we will always exclude this case from consideration assuming that $\mu$ is not a finite linear combination of point masses.

\medskip

Consider a pair $\mu = w \, dm + \mus$, $f$ related by  \eqref{eq1}.  Fatou's theorem (see Theorem~5.3  in Section~I.5 in Garnett \cite{Garnett}) implies that for Lebesgue almost every $\xi \in \T$ we have
\begin{equation}\label{eq56}
\frac{1 - |f(\xi)|^2}{|1 - \xi f(\xi)|^2} = w(\xi) .
\end{equation}
Note that $\Re(1 - z f(z)) \ge 0$ for $z \in \D$, hence the function $1 - z f$ is outer in $\D$ (see Corollary 4.8 in Section II.4 in Garnett \cite{Garnett}). Hence, we have $\log|1 - \xi f| \in L^1(\T)$ and
\begin{equation}\label{eq62}
\int_{\T}\log{|1 - \xi f(\xi)|^2}\,dm (\xi) = \log|1 - z f(z)|^2\Bigr\rvert_{z = 0} = 0.
\end{equation}
This formula and \eqref{eq56} imply
$$
\int_{\T}\log{(1 - |f|^2)}\,dm = \int_{\T}\log w\,dm, 
$$
where both sides are finite or equal to $-\infty$ simultaneously. In other words, we have $\mu \in \szc$ if and only if $\log(1-|f|^2) \in L^1(\T)$. 

\medskip

Let us now recall the definition of Schur's algorithm. Take a probability measure $\mu$ on $\T$ with an infinite support and consider the Schur function $f$ of $\mu$. Define the analytic functions $f_n: \D \to \D$ by Schur's algorithm:
\begin{equation}\label{eq41}
z f_{n+1} (z) = \frac{f_{n} (z) - f_{n}(0)}{1 - \ov{f_{n}(0)}f_n (z)}, \quad n \ge 0, \qquad f_0 (z) = f(z), \quad z \in \D , 
\end{equation}
and let $\mu_n$ be the measures generated by $f_n$ via \eqref{eq1}. Note that $f_0 = f$ and $\mu_0 = \mu$. Szeg\H{o} theorem says that the measure $\mu$ is in the Szeg\H{o} class $\szc$ if and only if $\sum_n |f_n (z)|^2 < \infty$ for any $z \in \D$, and, moreover, in this case
\begin{equation}\label{szego}
\int_{\T} \log w \, dm =  \int_{\T} \log (1-|f|^2) \, dm = \sum_{n=0}^\infty \log (1 - |f_n (0)|^2). 
\end{equation}
See Theorem 2.7.14 and Theorem 3.1.4 in Simon \cite{Simonbook1}. In \cite{BD21}, the following generalization of \eqref{szego} was found: 
\begin{equation}\label{entf}
\K(\mu, z) = \sum_{k=0}^{\infty} \log\left(1 + (1 - |z|^2)\frac{|f_{k}(z)|^2}{1 - |f_{k}(z)|^2}\right), \qquad z \in \D,
\end{equation}
for every $\mu \in \szc$. Nonnegativity of summands in this formula implies that the entropy decreases in the Schur algorithm, that is, 
\begin{equation}\label{decayentropy}
\K (\mu_{n+1} , z) \le \K (\mu_n , z) , \qquad z \in \D , \quad  n \ge 0. 
\end{equation}
Let us also mention the useful relation 
\begin{align}\label{entropyschur}
\K(\mu , z) = \log(1- |zf(z)|^2) - \Pp(\log (1- |f|^2), z), \qquad z \in \D, 
\end{align}
that follows from the definition of $\K(\mu, z)$, formula \eqref{eq55} and the fact that $1 - zf$ is an outer function in $\D$ (in particular, $\Pp(\log|1 - \xi f(\xi)|^2, z) = \log|1 - zf(z)|^2$ for every $z \in \D$). The same formula for $\mu_n$, $f_n$ reads as
\begin{align}\label{entropyschur-n}
\K(\mu_n, z) = \log(1- |zf_n(z)|^2) - \Pp(\log (1- |f_n|^2), z), \qquad z \in \D, \quad n \ge 0.
\end{align}
Further, for $\lambda \in \D$, we define
\begin{equation}\label{eq44}
\tilde\phi_{\lambda, n}(z) = \frac{z \phi_{n-1}(z) - \ov{f_{n-1}(\lambda)}\phi_{n-1}^*(z)}{\sqrt{1-|f_{n-1}(\lambda)|^2}}, \quad n \ge 0,
\end{equation}
where we set $f_{-1} \equiv 0$, $z \phi_{-1} \equiv 1$, $\phi_{-1}^* \equiv 1$. Let also $\tilde\phi_{\lambda, n}^* (z) = z^{n}\ov{\tilde\phi_{\lambda, n}(1/\bar z)}$ be the corresponding reflected polynomials. Note that
\begin{equation}\label{eq44star}
\tilde\phi_{\lambda, n}^*(z) = \frac{\phi_{n-1}^*(z) - zf_{n-1}(\lambda)\phi_{n-1}(z)}{\sqrt{1-|f_{n-1}(\lambda)|^2}}, \quad n \ge 0,
\end{equation}
while the usual reflected polynomials $\phi_{n}^*$ can be written in the form
\begin{equation}\notag
\phi_{n}^*(z) = \frac{\phi_{n-1}^*(z) - zf_{n-1}(0)\phi_{n-1}(z)}{\sqrt{1-|f_{n-1}(0)|^2}} = \phi_{0,n}^* (z), \quad n \ge 0,
\end{equation}
as follows from Szeg\H{o} recurrence \eqref{eqreq} and Geronimus theorem, 
$$
a_n = f_n(0) , \quad n \ge 0, 
$$ 
see Theorem 3.1.4 in Simon \cite{Simonbook1}. In particular, for $n\ge 1$ we have
\begin{equation}\label{varphi}
\frac{\tilde\phi_{\lambda, n}^*(z)}{\phi_{n}^*(z)}\frac{\sqrt{1-|f_{n-1}(\lambda)|^2}}{\sqrt{1-|f_{n-1}(0)|^2}}  = \frac{1 - z\ov{f_{n-1}(\lambda)} b_{n-1}(z)}{1 - z\ov{f_{n-1}(0)}b_{n-1}(z)}, \qquad b_{n-1} = \frac{\phi_{n-1}}{\phi^*_{n-1}},
\end{equation}
which yields the estimate 
\begin{equation}\label{eq74}
\frac{1 - |f_{n-1}(\lambda)|}{1 + |f_{n-1}(0)|} \le
\left|\frac{\tilde\phi_{\lambda, n}^*(z)}{\phi_{n}^*(z)}\right|\frac{\sqrt{1-|f_{n-1}(\lambda)|^2}}{\sqrt{1-|f_{n-1}(0)|^2}} \le 
\frac{1 + |f_{n-1}(\lambda)|}{1 - |f_{n-1}(0)|}, 
\end{equation}
that will allow us to switch between $\tilde\phi_{\lambda, n}^*$ and $\phi_{n}^*$ provided that $f_{n-1}(\lambda)$, $f_{n-1}(0)$ are small. Note that \eqref{eq74} trivially holds also for $n = 0$ because $\tilde\phi_{\lambda, 0}^* = \phi_{0}^* = 1$. The main feature of the polynomials $\{\tilde\phi_{\lambda, n}\}$ is the entropy bound
\begin{equation}\label{eq66}
\K(\nu_{\lambda,n}, \lambda) \le \K(\mu, \lambda), \qquad n \ge 0, \quad \nu_{\lambda,n} = \frac{dm}{|\tilde\phi_{\lambda, n}^*|^2},
\end{equation}
see Corollary 4 in \cite{BD21}. Relation \eqref{eq66} could be rewritten in the form
\begin{equation}\label{eq66bis}
\Pp\left(\left|\frac{\tilde\phi_{\lambda, n}^*(\lambda)}{\tilde\phi_{\lambda, n}^*} - 1\right|^2, \lambda \right) \le e^{\K(\mu, \lambda)} - 1,
\end{equation}
see Lemma 2.3 in \cite{Bes24}. The bound \eqref{eq66bis} has important consequences that we state and prove below in Lemma \ref{l50} and Lemma \ref{lemNearCircAdd}. These lemmas are not specific for orthogonal polynomials as they work for any polynomials that satisfy entropy bounds of type \eqref{eq66bis} with the small enough right hand side. For consistency, we will use notation $\phi^*$ for a polynomial without zeroes in the open unit disk. The lemmas will be used later for $\phi^* = \tilde\phi_{\lambda, n}^*$. 

\medskip

\begin{Lem}\label{l50}
Let $\phi^*$ be a polynomial of degree at most $n$ without zeroes in the open unit disk $\D$, and let $\phi (z) = z^{n}\ov{\phi^*(1/\bar z)}$ be the corresponding reflected polynomial. Denote by $b = \phi/\phi^*$ the Blaschke product generated by $\phi$, $\phi^*$. Fix $\zeta \in \T$, $r \in (0, 1)$, $A > 0$, and consider the Stolz angle $\Gamma_{\zeta} = \conv\bigl(\{z \in \C:\; |z| \le r\},\, \{\zeta\}\bigr)$. For every $\eps > 0$ there exists $\eta_0 > 0$ depending only on $r$, $A$, $\eps$, such that if 
\begin{equation}\label{eq66bisbis}
\Pp\left(\left|\frac{\phi^*(\lambda)}{\phi^*} - 1\right|^2, \lambda \right) \le \eta_0
\end{equation}
for some $\lambda \in \Gamma_{\zeta}$, then there exists $\alpha(\lambda) \in \T$ such that 
\begin{equation}\label{eq774}
\sup_{z:\; d_{H}(z,\lambda) \le A}\bigl|b(z) - \alpha(\lambda) z^n\bigr| \le \eps, 
\end{equation}
where $d_{H}(z,\lambda)$ is the hyperbolic distance between $z$, $\lambda$, see \eqref{eqHD}.
\end{Lem}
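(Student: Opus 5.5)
The idea is to convert \eqref{eq66bisbis} into an $L^2$ approximation of $\phi^*(\lambda)/\phi^*$ by the constant $1$ with respect to harmonic measure at $\lambda$. Then we use the identity $|\phi| = |\phi^*|$ on $\T$ to control $b\,\bar\xi^{\,n}$ on $\T$ in the same norm, and finally pass from the circle to the hyperbolic disk around $\lambda$ by Poisson/Cauchy integrals.

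\emph{Step 1 (reduction to $\lambda$-centred coordinates).} Let $\tau_\lambda(z) = (z+\lambda)/(1+\bar\lambda z)$. Poisson integrals at $\lambda$ become integrals with respect to $dm$ after composing with $\tau_\lambda$. Put $g = \phi^*(\lambda)/\phi^*$. It is analytic and zero-free in $\D$, since $\phi^*$ has no zeroes there, and it extends continuously to $\overline{\D}$ if $\phi^*$ has no zeroes on $\T$; the boundary zeroes can be handled by approximation $\phi^*(\cdot)\mapsto\phi^*(\rho\,\cdot)$. Hypothesis \eqref{eq66bisbis} says $\|g\circ\tau_\lambda - 1\|_{L^2(m)}^2 \le \eta_0$, and $g\circ\tau_\lambda(0)=1$.

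\emph{Step 2 (the key identity on $\T$).} On $\T$ we have $b(\xi)\bar\xi^{\,n} = \overline{\phi^*(\xi)}/\phi^*(\xi)$. Writing $\phi^*(\lambda)=|\phi^*(\lambda)|e^{i\theta}$, this gives
$$
b(\xi)\bar\xi^{\,n} = e^{-2i\theta}\,\frac{g(\xi)}{\overline{g(\xi)}}.
$$
Since $g$ is close to $1$ in $L^2$ of harmonic measure at $\lambda$, the quotient $g/\bar g$ is close to $1$ in measure: off a set of small harmonic measure we have $|g-1|\le \eta_0^{1/4}$, and there $|g/\bar g - 1|\lesssim \eta_0^{1/4}$. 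Because $|b\bar\xi^n| = 1$, the difference is bounded everywhere. Altogether,
$$
\Pp\bigl(|b(\xi)\bar\xi^{\,n} - e^{-2i\theta}|^2,\lambda\bigr)\lesssim \eta_0^{1/2}.
$$
We set $\alpha(\lambda)=e^{-2i\theta}$.

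\emph{Step 3 (from the circle to the hyperbolic ball).} The function $h(z) = b(z) - \alpha z^n$ is analytic and bounded by $2$. We need $|h|$ small on $\{d_H(z,\lambda)\le A\}$, but Step 2 only controls $|h(\xi)| = |b(\xi)-\alpha\xi^n|$ on $\T$ in $L^2(\Pp(\cdot,\lambda))$. By subharmonicity of $|h|^2$,
$$
|h(z)|^2 \le \Pp(|h|^2, z) \le C(A)\,\Pp(|h|^2,\lambda)
$$
for $d_H(z,\lambda)\le A$. The second inequality is Harnack's inequality, since the ratio of Poisson kernels at $z$ and $\lambda$ is bounded by a constant depending only on the hyperbolic distance. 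This gives $\sup |h|\le (C(A)\eta_0^{1/2})^{1/2}\le\eps$ once $\eta_0$ is small. The dependence is only on $A$ and $\eps$; $r$ plays no essential role.

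\emph{Main obstacle.} The main obstacle is Step 2, namely passing rigorously from $L^2$-smallness of $g-1$ to smallness of $g/\bar g-1$. Near possible zeroes of $\phi^*$ on $\T$, $g$ is unbounded, so one must use only the measure estimate together with the unimodularity of $b\bar\xi^n$. I would first try the Chebyshev-inequality split described above. If needed, Step 2 can alternatively be done via $\arg g$: zero-freeness gives $\log g\in H^p$ for small $p$, and $\Im\log g$ is controlled by Kolmogorov's weak-type estimate for the conjugate function.
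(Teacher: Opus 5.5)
Your proposal is correct and follows essentially the same route as the paper: a Chebyshev split of $\T$ into a set of small harmonic measure and a set where $\arg\phi^*$ is close to $\arg\phi^*(\lambda)$, the choice $\alpha(\lambda)=\overline{\phi^*(\lambda)}/\phi^*(\lambda)$, and then Harnack's inequality together with the Poisson representation of the bounded analytic function $b-\alpha z^n$ to pass to the hyperbolic ball. The paper bounds $|h(z)|$ by $\Pp(|h|,z)$ rather than $|h(z)|^2$ by $\Pp(|h|^2,z)$, which makes no difference; also, your approximation by $\phi^*(\rho\,\cdot)$ is unnecessary, since finiteness of the Poisson integral in the hypothesis already rules out zeroes of $\phi^*$ on $\T$, and your argument never uses continuity of $g$.
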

\beginpf Put $E'_{\eps}(\lambda)= \{\xi \in \T: |\phi^*(\lambda)/\phi^*(\xi) - 1| > \eps\}$, $E_{\eps}(\lambda) = \T\setminus E'_{\eps}(\lambda)$, and
$$
\eta = \Pp\left(\left|\frac{\phi^*(\lambda)}{\phi^*} - 1\right|^2, \lambda \right).
$$
We have
\begin{equation}\label{eq121}
\sup_{z:\; d_{H}(z,\lambda) \le A}\int_{E'_{\eps}(\lambda)}\frac{1 - |z|^2}{|1 -\bar\xi z|^{2}} \,dm(\xi) \to 0, \qquad \eta \to 0.
\end{equation}
Since on the set $E_{\eps}(\lambda)$ the argument of $\phi^*$ (to be denoted by $\arg\phi^*$) is close to the argument of $\phi^*(\lambda)$ modulo $2\pi \Z$, and since $b(\xi) = \xi^n \cdot e^{-2i\arg\phi^*(\xi)}$ for all $\xi \in \T$, we have 
$$
\sup_{\xi \in E_{\eps}(\lambda)}|b(\xi) - \alpha(\lambda)\xi^n| \lesssim \eps, \qquad \alpha(\lambda) = \frac{\ov{\phi^*(\lambda)}}{\phi^*(\lambda)} = e^{-2i\arg\phi^*(\lambda)}.
$$  
We now see from \eqref{eq121} that
$$
\sup_{z:\; d_{H}(z,\lambda) \le A}\int_{\T}|b(\xi) - \alpha(\lambda)\xi^n|\frac{1 - |z|^2}{|1 -\bar\xi z|^{2}} \,dm(\xi) \lesssim \eps
$$ 
if $\eta$ is small enough. This relation and the analyticity of  $b$, $z^n$ imply \eqref{eq774}  provided that \eqref{eq66bisbis} holds with $\eta_0 > 0$ small enough. \qed

\medskip

Next lemma was implicitly used in \cite{Bes24} but not proved there. We fill this gap below.

\begin{Lem}\label{lemNearCircAdd}
Let $\phi^*$ be a polynomial of degree at most $n$ without zeroes in the open unit disk $\D$, let $\zeta \in \T$, $A_1, A_2 > 0$. Assume that $\max(A_1, A_2)/n \le 1/2$ and set $\lambda_n = (1-A_1/n)\zeta$. There exists $\eta_0 > 0$ depending only on $A_1$, $A_2$ such that if  
\begin{equation}\label{eq66bisbisbis}
\Pp\left(\left|\frac{\phi^*(\lambda_n)}{\phi^*} - 1\right|^2, \lambda_n \right) \le \eta_0,
\end{equation}
then the polynomial $\phi^*$ has no zeroes in $B(\zeta, A_2/n) = \{z\in \C:\; |\zeta - z| \le A_2/n\}$. 
\end{Lem}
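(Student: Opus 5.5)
We argue by contradiction combined with a rescaling and normal-families argument at scale $1/n$. Set $\eta$ to be the left-hand side of \eqref{eq66bisbisbis}, and write $g = \phi^*(\lambda_n)/\phi^*$. Since $\phi^*$ has no zeroes in $\D$, the function $g$ is analytic in $\D$ and $g(\lambda_n) = 1$. The hypothesis says that the Poisson integral at $\lambda_n$ of $|g-1|^2$ is at most $\eta$. The Poisson kernel at $\lambda_n$ has mass comparable to $1$ on the arc $J_n = \{\xi\in\T : |\xi-\zeta| \le C/n\}$, with density $\asymp n$ there (for each fixed $C$ the constants depend only on $A_1$, $C$). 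From this we get
\begin{equation*}
n\int_{J_n}|g-1|^2\,dm \lesssim_{A_1,C} \eta .
\end{equation*}
So on the arc $J_n$, in an $L^2$-averaged sense at scale $1/n$, $\phi^*$ is close to the constant $\phi^*(\lambda_n)$. The goal is to turn this boundary information into the absence of zeroes of $\phi^*$ in the disk $B(\zeta, A_2/n)$. That disk sticks out of $\D$, so we have to leave $\D$.

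The first idea is to use the reflection structure. Write $\phi^* = c\prod_j (1-\bar w_j z)$ up to the trivial factor, with zeroes $1/\bar w_j$ satisfying $|w_j|\le 1$. On $\T$ we have $|\phi^*| = |\phi|$ and $\arg\phi^*(\xi) = -\frac12\arg b(\xi) + \frac n2 \arg\xi$, as in the proof of Lemma \ref{l50}. A zero of $\phi^*$ at a point $z_0 \in B(\zeta, A_2/n)\setminus\D$ means that $b = \phi/\phi^*$ has a zero at $1/\bar z_0$, which lies in $\D$ within distance $\lesssim A_2/n$ of $\zeta$. By Lemma \ref{l50} (applied with $r$ chosen so that $\lambda_n$ lies in the corresponding Stolz angle, and with $A$ large depending on $A_1,A_2$), smallness of $\eta$ gives $|b(z) - \alpha z^n| \le \eps$ on the hyperbolic ball $\{d_H(z,\lambda_n)\le A\}$. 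Since $1/\bar z_0$ is at distance $\ge$ some $c(A_2)/n$ from $\T$ only when $z_0$ is not too close to $\T$, we split into two cases. If $1-|1/\bar z_0| \ge \delta/n$, then $1/\bar z_0$ lies in that hyperbolic ball for suitable $A = A(A_1,A_2,\delta)$. There $|z^n| \ge e^{-2A_2-1}$, so taking $\eps$ smaller than this quantity contradicts $b(1/\bar z_0)=0$.

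The remaining case is the main obstacle: zeroes of $\phi^*$ in the thin annulus $1 \le |z_0| \le 1+\delta/n$ near $\zeta$, where the hyperbolic-ball information degenerates. For this case I would work directly with $h(z) = g(\zeta e^{i t/n})$-type rescalings. Consider the sequence-free statement by contradiction: suppose there are polynomials $\phi^*_k$ of degrees $n_k$ with $\eta_k\to 0$ and zeroes $z_k \in B(\zeta,A_2/n_k)$. Rescale via $P_k(s) = \phi^*_k(\zeta(1 + s/n_k))/\phi^*_k(\lambda_{n_k})$. The bound \eqref{eq774} on $b$ in a large hyperbolic ball, together with $|\phi^*|=|\phi|$ on $\T$, gives control of $\log|P_k|$. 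One uses $\log|\phi^*|$ harmonic in $\D$, and Bernstein-type bounds for polynomials of degree $n_k$ at scale $1/n_k$ (from the $L^2$ bound on $J_n$ and the fact that $\phi^*$ has no zeroes in $\D$, the functions $P_k$ are locally uniformly bounded on compacts in the $s$-plane). So we obtain a normal family of entire-type limits. In the limit, the boundary condition $|g-1|\to 0$ in $L^2$ on the rescaled line forces $P_k \to 1$ on the segment of $\T$, hence locally uniformly by Hurwitz/identity theorem, i.e. $P = e^{0}=1$ on the relevant region. This requires justifying that limits are analytic across the rescaled circle, which follows from polynomial growth bounds $|\phi^*(z)| \le |z|^{n}\max_{\T}$-type estimates localized via Bernstein. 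Hurwitz's theorem then rules out zeroes $z_k$ converging to a point of $\bar B(0,A_2)$ in the $s$-variable. That is a contradiction, and it yields $\eta_0$ depending only on $A_1, A_2$. The delicate point I expect is the local uniform boundedness of $P_k$ outside $\D$ from purely local $L^2$ data. I would try to derive it from \eqref{eq774} combined with $\phi^* = \bar{b}\,\xi^n\phi\cdot$(reflection), i.e. $\phi^*(z)$ for $|z|>1$ equals $z^n\overline{\phi^*(1/\bar z)}/\overline{b(1/\bar z)}\cdot$ appropriate factors. Then boundedness outside follows from boundedness inside plus $b$ being bounded below near $\zeta$, which is the first case.
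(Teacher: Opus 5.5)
Your first case (zeroes $z_0$ whose reflection $1/\bar z_0$ lies at distance $\ge\delta/n$ from $\T$) is fine. It coincides with the second half of the paper's argument via Lemma \ref{l50}.

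The gap is in the thin annulus, which is where the lemma has real content. Your compactness/Hurwitz scheme needs the rescaled polynomials $P_k$ to be locally uniformly bounded on a neighbourhood of $\{|s|\le A_2\}$, that is, on both sides of the rescaled circle. The justification you offer does not provide this.

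The $L^2$ bound on $J_n$ only gives $|\phi^*|\le 2|\phi^*(\lambda_n)|$ off an exceptional set of measure $O(\eta/n)$; it gives no pointwise bound there. Bernstein-type estimates such as $|\phi^*(z)|\le|z|^n\max_{\T}|\phi^*|$ are stated in terms of the global maximum over $\T$, and the hypothesis says nothing about that. Your fallback through reflection is also wrong. For $|z|\ge 1$ one has $\phi^*(z)=z^n\,\overline{b(1/\bar z)\,\phi^*(1/\bar z)}$, so $b$ multiplies rather than divides. Moreover, a lower bound on $|b|$ is exactly what is unavailable at reflected points within $\delta/n$ of $\T$, so that route is circular precisely where a putative zero would sit.

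The missing ingredient is the radial comparison for polynomials without zeroes in $\D$: $|p(rz)|\ge\bigl(\frac{1+r}{2}\bigr)^{\deg p}|p(z)|$ for $z\in\overline{\D}$ and $0\le r\le 1$. This is Lemma 2.3 in \cite{B21}, and it follows factor by factor from $|1-ra|\ge\frac{1+r}{2}|1-a|$ for $|a|\le 1$. This is how zero-freeness in $\D$ actually enters the argument.

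With it, points of $\overline{\D}\cap B(\zeta,2A_2/n)$ near $\T$ can be pushed radially to the circle $\{|w|=1-A_1/n\}$ at the cost of a factor $e^{A_1}$. On that circle, and deeper inside, Harnack gives $|\phi^*(w)|\le(1+O(\sqrt\eta))|\phi^*(\lambda_n)|$. For $|z|>1$, the correct reflection formula together with $|b|\le 1$ gives $|\phi^*(z)|\le e^{2A_2}|\phi^*(1/\bar z)|$.

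This makes $\{P_k\}$ a normal family, and your argument then closes. In fact the interior Harnack bound alone identifies the limit as $1$, so Hurwitz excludes zeroes in all of $B(\zeta,A_2/n)$; neither your first case nor the boundary $L^2$ data are needed.

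The paper uses the same radial inequality, but without compactness. It factors $\phi^*=(z-z^*)p_{n-1}$ at a putative zero $z^*$ in the $\eps/n$-strip. It then compares $\phi^*$ at two interior points $z_\eps$ and $z_{\lambda_n}$ on the ray through $z^*$. Harnack forces $|\phi^*(z_{\lambda_n})/\phi^*(z_\eps)|=1+O(\sqrt\eta)$, while the radial inequality forces this ratio to be $\gtrsim A_2/\eps$, which is a contradiction for small $\eps$ and $\eta$.
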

\beginpf Define 
$$
\eta = \Pp\left(\left|\frac{\phi^*({\lambda_n})}{\phi^*} - 1\right|^2, {\lambda_n} \right).
$$
Our first goal is to show that if $\eta$ is small enough, then $\phi^*$ cannot have zeroes in the ``$\eps$-strip'' 
$$
\{z \in B(\zeta, A_2/n): \; \bigl|1- |z|\bigr| < \eps/n\},
$$
for some $\eps > 0$. Suppose, on the contrary, that for a given $\eps \in (0, A_2/2)$ there is $z^* \in B(\zeta, A_2/n)$ such that $|1 - |z^*| | < \eps/n$ and  $\phi^*(z^*) = 0$. Since $\phi^*$ has no zeroes in $\D$, we have $|z^*| \ge 1$. Choose a wide Stolz angle $\Gamma = \Gamma_{\zeta}^{A_2, \eps}$ with vertex at $\zeta$ and aperture sufficiently large such that the Hausdorff distance between $\Gamma \cap B(\zeta, A_2/n)$ and the unit circle  $\T$ is comparable to $\eps/n$ (see Figure \ref{fig:img3}). Observe that the point $z_{{\lambda_n}} = \left(1 - \frac{A_2}{n}\right)\frac{z^*}{|z^*|}$ belongs to $\Gamma$. 
Denote by $z_\eps$ the point on the boundary of  $\Gamma$ such that $z_\eps/|z_\eps| = z^*/|z^*|$. Since all three points $\lambda_n$, $z_\eps$, and $z_{\lambda_n}$ belong to the same Stolz angle $\Gamma$ and have comparable distances to the unit circle $\T$, we have
\begin{align*}
\left|\frac{\phi^*({\lambda_n})}{\phi^*(z_{{\lambda_n}})} - 1\right| 
& \le \sqrt{\Pp\left(\left|\frac{\phi^*({\lambda_n})}{\phi^*} - 1\right|^2, z_{{\lambda_n}} \right)}\\
&\lesssim \sqrt{\Pp\left(\left|\frac{\phi^*({\lambda_n})}{\phi^*} - 1\right|^2, {\lambda_n}\right)} \lesssim \sqrt{\eta},\\
\left|\frac{\phi^*({\lambda_n})}{\phi^*(z_\eps)} - 1\right| 
&\le \sqrt{\Pp\left(\left|\frac{\phi^*({\lambda_n})}{\phi^*} - 1\right|^2, z_\eps \right)}\\
&\lesssim \sqrt{\Pp\left(\left|\frac{\phi^*({\lambda_n})}{\phi^*} - 1\right|^2, {\lambda_n} \right)} \lesssim \sqrt{\eta},
\end{align*} 
with constants depending only on  $A_1$, $A_2$ and $\eps$. This gives 
\begin{equation}\label{eq92}
\left|\frac{\phi^*(z_{{\lambda_n}})}{\phi^*(z_\eps)}\right| = 1 + O(\sqrt{\eta}) 
\end{equation}
for all $\eta < \eta_{00}$ (again, the value of $\eta_{00}$ depends solely on $A_1$, $A_2$ and $\eps$). 
\begin{figure}
\centering
  \includegraphics[width=\linewidth]{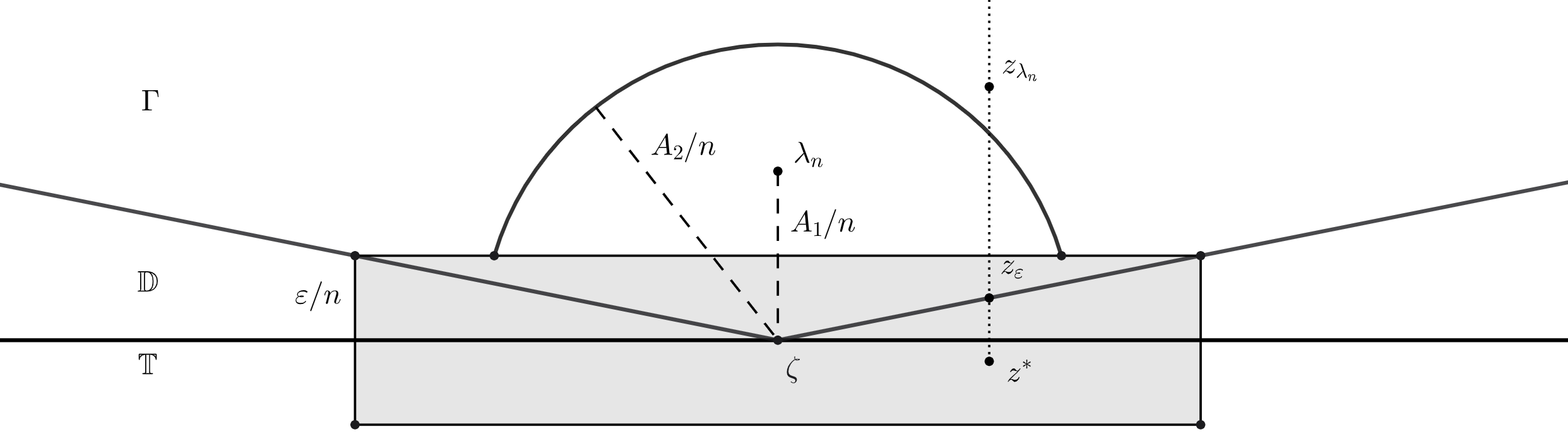}
\caption{Objects appearing in the proof of Lemma \ref{lemNearCircAdd}.}
\label{fig:img3}
\end{figure}
At the same time, Lemma 2.3 in \cite{B21} implies
the estimate
$$
\left|\frac{\phi^*(z_{{\lambda_n}})}{\phi^*(z_\eps)}\right| \ge \left(\frac{1+r_\eps}{2}\right)^{n-1}\left|\frac{z_{{\lambda_n}}-z^*}{z_\eps-z^*}\right|, \qquad r_\eps = \left|\frac{z_{{\lambda_n}}}{z_\eps}\right|, 
$$
if we factorize $\phi^* = (z-z^*)p_{n-1}$ and use the fact that the polynomial $p_{n-1}$ has no zeroes in $\D$ (the first usage of such estimates in the context of orthogonal polynomials is due to M\'at\'e and Nevai \cite{MN80}). Observe that 
$$
\left(\frac{1+r_\eps}{2}\right)^{n-1} \ge \left(\frac{1+|z_{{\lambda_n}}|}{2}\right)^{n-1} \ge \left(1 - \frac{A_2}{2n}\right)^{n-1} \ge c_{A_2},
$$ 
for some constant $c_{A_2} > 0$, and 
$$
\left|\frac{z_{{\lambda_n}}-z^*}{z_\eps-z^*}\right| \ge \frac{A_2}{\eps}.
$$ 
So, if $\eps > 0$ is so small that 
$$
\left(\frac{1+r_\eps}{2}\right)^{n-1}\left|\frac{z_{\lambda_n}-z^*}{z_\eps-z^*}\right| \ge \frac{A_2c_{A_2}}{\eps} \ge 2,
$$
and $\eta_{00} > 0$ is so small that for $\eta \le \eta_{00}$ relation \eqref{eq92} gives 
$$
\left|\frac{\phi^*(z_{\lambda_n})}{\phi^*(z_\eps)}\right| \le \frac{3}{2},  
$$
we obtain a contradiction. In other words, for chosen $\eps$, $\eta_{00}$ there is no point $z^* \in B(\zeta, A_2/n)$ satisfying assumptions 
\begin{equation}\notag
|1 - |z^*| | \le \eps/n, \qquad \phi^*(z^*) = 0, 
\end{equation}
provided $\eta \le \eta_{00}$. Our next aim is to prove that for this fixed value $\eps>0$ the polynomial $\phi^*$ has no zeroes also in the remaining part of the disk $B(\zeta, A_2/n)$ (in fact, for this we will need to change $\eta_{00}$ by a smaller constant). 

Set $\phi (z) = z^n \ov{\phi^*(1/\bar z)}$ and note that
$b = \phi/\phi^*$ is a Blaschke product. Lemma \ref{l50} implies that for every $A > 0$, $\gamma > 0$ there exists $\eta_{01} > 0$ such that if $\eta \le \eta_{01}$, then  
\begin{equation}\label{eq94}
\sup_{z:\; d_{H}(z,\lambda) \le A}\Bigl||b(z)| - |z|^n\Bigr| \le \gamma. 
\end{equation}
For $S \subset \D$, we will denote $S^* = \{z \in \C: 1/\bar z \in S\}$. Let us take $A$ so large that the hyperbolic disk $\Omega_n(A) = \{z \in \D:\; d_{H}(z,\lambda_n) \le A\}$ satisfies
\begin{equation}\label{eq114}
\Omega_n(A) \cup \{z \in \C: \; \bigl|1- |z|\bigr| < \eps/n\} \cup \Omega_n(A)^* \supset B(\zeta, A_2/n).
\end{equation}
Note that the choice of $A$ depends only on parameters $\eps$, $A_2$ and can be made independent of $n$. In fact, we have 
$$
\inf_{z \in \Omega_n(A)}(1-|z|) \asymp 1/n,
$$ 
which guarantees that $\gamma$ defined by
$$
\gamma = \frac{\inf_{z \in \Omega_n(A)}|z|^n}{2}
$$ 
is bounded below by a constant only depending in $A$. Then, for this choice of $A$, $\gamma$ we use Lemma \ref{l50} to find $\eta_{01}>0$ such that \eqref{eq94} holds for $\eta \le \eta_{01}$. Note that \eqref{eq94} implies that the Blaschke product $b$ has no zeroes in $\Omega_n(A)$. Hence, the same is true about $\phi$, and then the polynomial $\phi^*$ has no zeroes in the reflected set $\Omega_n(A)^*$. Finally, \eqref{eq114} and the first part of the proof imply that $\phi^*$ has no zeroes in the whole disk $B(\zeta, A_2/n)$. We now see that the claim of the lemma holds for $\eta_{0} = \min(\eta_{00}, \eta_{01})$. \qed

\medskip

The following result is Lemma 2.6 in \cite{Bes24}.
\begin{Lem}\label{lemNearCirc}
Let $\mu \in \szc$, $A > 0$, $\zeta \in \T$. Assume that $n \ge 2A$ and set $\lambda = (1-A/2n)\zeta$. There exists a constant $\eta_0 > 0$ depending only on $A$, such that if $\K(\nu_{\lambda,n}, \lambda)  \le \eta_0$, then 
$$
\left|\frac{\tilde\phi_{\lambda, n}^*(\lambda)}{\tilde\phi_{\lambda, n}^*(z)} - 1\right| \le c e^{2A}\sqrt{\K(\nu_{\lambda,n}, \lambda)},
$$
for every $z \in \partial \Omega_n$, where $\Omega_n$ is a domain in $\C$ with a piece-wise smooth boundary $\partial \Omega_n$ such that $B(\zeta, A/n) \subset \Omega_n \subset B(\zeta, 4A/n)$, $B(\zeta, r) = \{z\in \C:\; |\zeta - z| \le r\}$. Here, the constant $c$ is universal.
\end{Lem}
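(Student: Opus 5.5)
The statement is Lemma 2.6 of \cite{Bes24}. I would prove it by combining the entropy bound \eqref{eq66bis}, applied to $\nu_{\lambda,n}$ in place of $\mu$, with Lemma \ref{lemNearCircAdd} and the maximum principle. Write $\phi^* = \tilde\phi_{\lambda,n}^*$ and $\kappa = \K(\nu_{\lambda,n},\lambda)$. The orthonormal polynomials of $\nu_{\lambda,n} = dm/|\phi^*|^2$ have Schur parameters vanishing from index $n$ on. So \eqref{eq66bis} applied to $\nu_{\lambda,n}$ gives
$$
\Pp\left(\left|\frac{\phi^*(\lambda)}{\phi^*}-1\right|^2,\lambda\right)\le e^{\kappa}-1\lesssim \kappa
$$
for $\kappa\le\eta_0\le 1$. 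The goal is to convert this $L^2$ bound for the Poisson measure at $\lambda$ into a pointwise bound for $g = \phi^*(\lambda)/\phi^* - 1$ on a suitable contour around $\zeta$ of size $\sim A/n$.

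First, I apply Lemma \ref{lemNearCircAdd} with $A_1 = A/2$ and $A_2 = 8A$; this needs $8A/n\le 1/2$, which I would either arrange or handle by a harmless rescaling of constants. Once $\eta_0$ is small, $\phi^*$ has no zeroes in $B(\zeta, 8A/n)$, so $g$ is analytic there. Next I choose $\Omega_n$. I take it to be the part of $B(\zeta,4A/n)$ that contains $B(\zeta,A/n)$, with boundary consisting of an arc inside $\D$ and its reflection $z\mapsto 1/\bar z$, joined through $\T$. For instance, $\Omega_n = \{z: |z-\zeta|<2A/n\}$ has piecewise smooth boundary, and its part in $\D$ stays at hyperbolic distance $O_A(1)$ from $\lambda$ only away from $\T$. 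So the pointwise estimate must be done differently near $\T$.

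The pointwise estimate is the main step. On $\D$ I would use the Harnack-type subordination for the subharmonic function $|g|^2$:
$$
|g(z)|^2\le \Pp(|g|^2,z)\le \sup_\xi\frac{P_z(\xi)}{P_\lambda(\xi)}\,\Pp(|g|^2,\lambda).
$$
The ratio of Poisson kernels is bounded by $e^{cA}$ for $z\in B(\zeta,4A/n)\cap\D$ with $1-|z|\gtrsim 1/n$, since $1-|\lambda| = A/2n$ and $|z-\lambda|\lesssim A/n$. The real difficulty is near $\T$ and outside $\D$. There I use the reflection structure: on $\T$, $|\phi^*| = |\phi|$, and $\phi = \xi^n\overline{\phi^*}$. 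The function $h(z) = \phi^*(\lambda)/\phi^*(z)$ is analytic and zero-free in $B(\zeta,8A/n)$. I estimate $\log|h|$ and $\arg h$ there by a two-constant argument. The values on the inner part of a circle of radius $\sim 8A/n$ are controlled by the Harnack step. On the outer part, I would bound $|\phi^*(z)|$ for $|z|>1$ via $\phi^*(z) = z^n\overline{\phi(1/\bar z)}$ and Lemma \ref{l50}, which gives $b\approx\alpha z^n$ and hence $\phi\approx\alpha z^n\phi^*$ in the hyperbolic neighbourhood. The factor $|z|^n\le(1+8A/n)^n\le e^{8A}$ is the source of the $e^{2A}$-type constant.

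Finally, I apply the maximum principle to $g$ on $\Omega_n\subset B(\zeta,4A/n)$, where $g$ is analytic since there are no zeroes. This transfers the bound $|g|\le ce^{2A}\sqrt{\kappa}$ from an enclosing contour of comparable size to $\partial\Omega_n$, with $\partial\Omega_n$ chosen as a level contour where the previous estimates apply. The main obstacle is making the linear-in-$\sqrt{\kappa}$ bound hold uniformly outside $\D$ and not just qualitatively, as Lemma \ref{l50} gives. I would try to linearize: write $\phi = \alpha z^n\phi^*(1+\psi)$ with $\psi$ controlled by $\Pp(|g|^2,\lambda)^{1/2}$, using the identity $\overline{g}$-reflection on $\T$. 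This makes every error term quadratic-mean controlled by $\sqrt{\kappa}$.
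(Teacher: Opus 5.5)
The paper gives no proof of this lemma; it quotes it as Lemma 2.6 of \cite{Bes24}. Your argument therefore has to stand on its own, and it has a genuine gap exactly where the content of the lemma lies. Since $B(\zeta,A/n)\subset\Omega_n\subset B(\zeta,4A/n)$, the contour $\partial\Omega_n$ must cross $\T$. So must every contour around $\zeta$ on which you run the two-constants argument or the final maximum principle. Your inputs do not reach $\T$:
\begin{itemize}
\item The Harnack step is used only where $1-|z|\gtrsim 1/n$.
\item The reflected Lemma \ref{l50} is qualitative and only covers a hyperbolic disc around $\lambda$ (or its reflection), i.e.\ points at distance $\asymp_A 1/n$ from $\T$.
\end{itemize}
Hence on the arcs of your circle of radius $\sim 8A/n$ adjacent to $\T$ you have no bound at all. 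These arcs carry harmonic measure that does not shrink as $\kappa\to 0$. Neither the two-constants argument nor the maximum principle then gives a bound tending to $0$ with $\kappa$, let alone one of order $\sqrt{\kappa}$. Choosing $\partial\Omega_n$ ``as a level contour where the previous estimates apply'' is impossible for the same reason. Separately, Lemma \ref{lemNearCircAdd} with $A_2=8A$ requires $n\ge 16A$, so the range $2A\le n<16A$ allowed in the statement is not covered by ``rescaling constants''.

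What you single out as the main obstacle is actually the easy part. Put $\alpha=\overline{\phi^*(\lambda)}/\phi^*(\lambda)$.
\begin{itemize}
\item On $\T$ one has $b(\xi)/(\alpha\xi^n)=(1+g(\xi))/\overline{(1+g(\xi))}$. Hence $|b-\alpha\xi^n|\le 4|g|$ on $\T$, and $|b(w)-\alpha w^n|\le 4\Pp(|g|,w)$ in $\D$.
\item For $|z|>1$ and $w=1/\bar z$ there is the exact identity $g(z)=\overline{g(w)}-(1+g(z))\overline{\psi(w)}$. Here $\psi(w)=b(w)/(\alpha w^n)-1$ and $|\psi(w)|\le |z|^n|b(w)-\alpha w^n|$.
\end{itemize}

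The missing idea is control up to $\T$. Keep your Harnack step in its sharp form: for $z\in\D\cap B(\zeta,8A/n)$ one has $|g(z)|^2\le\Pp(|g|^2,z)\lesssim \frac{A}{n(1-|z|)}\,\Pp(|g|^2,\lambda)$. The resulting blow-up $(1-|z|)^{-1/2}$ of $|g|$ is integrable along a circle centred at $\zeta$, which crosses $\T$ transversally.

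By the identity above, the same integrable bound (with a constant depending on $A$) holds outside $\overline{\D}$. This needs an a priori bound $|1+g|\le C_A$ on the zero-free ball. That bound follows from Harnack's inequality for the harmonic function $\log|\phi^*/\phi^*(\lambda)|$ there. This function is bounded above thanks to $|\phi|\le|\phi^*|$ in $\D$, the radial estimate of Lemma 2.3 in \cite{B21}, and the interior Harnack step.

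Subharmonicity of $|g|$ on $B(\zeta,6A/n)$, where $g$ is analytic by Lemma \ref{lemNearCircAdd}, then gives $|g|\le C_A\sqrt{\kappa}$ on $B(\zeta,4A/n)$. An area-mean variant of the same argument also works. This is what the applications in the paper use; the precise constant $ce^{2A}$ needs further bookkeeping.
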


Let $\eta_0$ be the best possible $\eta_0$ in Lemma \ref{lemNearCircAdd}, Lemma \ref{lemNearCirc} for the choice of parameters $A_1 = 2$, $A_2 = 8$, $A=2$. In other words, let $\eta_0$ be the maximum of the all numbers $\eta_0$ for which {\it both} Lemma \ref{lemNearCircAdd} and Lemma \ref{lemNearCirc} work for $A_1 = 2$, $A_2 = 8$, $A=2$. Take a point $\zeta \in \T$ such that \eqref{eq21} holds. Set $\lambda_j = (1- 1/j)\zeta$ and define
\begin{equation}\label{nova}
n_0 = \min\Bigl\{n \ge 8A: \; e^{\K(\mu, \lambda_j)} - 1 \le \eta_0 \mbox{ for all }j \ge n\Bigr\}.
\end{equation}
We arrive at the following estimates.
\begin{Lem}\label{domain-bound}
For every $n \ge n_0$ and $0 \le k \le n$, we have
\begin{equation}\label{eq115bis}
\left|\frac{\tilde\phi_{\lambda_n, k}^*(z)}{\tilde\phi_{\lambda_n, k}^*(\lambda_n)} - 1 \right| \lesssim \sqrt{\K(\mu, \lambda_n)}, \qquad z \in B(\zeta, 1/n).
\end{equation}
Consequently, we have
\begin{equation}\label{eq68}
\left|\frac{\tilde\phi_{\lambda_n, k}^*(z)}{\tilde\phi_{\lambda_n, k}^*(\lambda_n)}\right|^2 = 1+O(\sqrt{\K(\mu, \lambda_n)}), \qquad z \in B(\zeta, 1/n), 
\end{equation}
where the constant in $O(\cdot)$ does not depend on $\mu$, $k$, and $z$.
\end{Lem}
\beginpf Take $n \ge n_0$, $0 \le k \le n$. Let us use Lemma \ref{lemNearCircAdd} for $\phi^* = \tilde\phi_{\lambda_n, k}^*$ with the parameters $\tilde n = k$, $\tilde A_1 = k/n$, $\tilde A_2 = 8k/n$ in place of $n$, $A_1$, $A_2$. We have 
$\max(\tilde A_1, \tilde A_2)/\tilde n = 8/n \le 8/n_0 \le 1/2$, as required in Lemma \ref{lemNearCircAdd}. We also have
$$
\lambda_{\tilde n} = (1 - \tilde A_1/\tilde n)\zeta = (1 - 1/n)\zeta = \lambda_{n}.
$$
Note that $\tilde A_1 \le 1$, $\tilde A_2 \le 8$, therefore, the optimal (i.e., largest possible) number $\tilde\eta_0$ in Lemma \ref{lemNearCircAdd} for this choice of parameters satisfies $\eta_0 \le \tilde \eta_0$. Let us check the main assumption \eqref{eq66bisbisbis} in Lemma \ref{lemNearCircAdd}:
\begin{equation}
\Pp\left(\left|\frac{\phi^*(\lambda_n)}{\phi^*} - 1\right|^2, \lambda_n \right) =  
e^{\K(\nu_{\lambda_n, k}, \lambda_n)} - 1 \le
e^{\K(\mu, \lambda_n)} - 1 \le \eta_0 \le \tilde\eta_0.
\end{equation}
Lemma \ref{lemNearCircAdd} now tells us that $\tilde\phi_{\lambda_n, k}^*$ has no zeroes in $B(\zeta, \tilde A_2/\tilde n) = B(\zeta, 8/n)$. This information will be used at the last part of the proof.

\medskip

Next, we use Lemma \ref{lemNearCirc} for $\tilde n = k$, $\tilde A = 2k/n$ in place of $n$, $A$. With this choice of parameters, we have
$\tilde \lambda = (1 - \tilde A/2\tilde n)\zeta = \lambda_n$. Moreover, 
$$
\K(\nu_{\tilde \lambda,\tilde n}, \tilde \lambda) \le \K(\mu, \tilde\lambda) = \K(\mu, \lambda_n) \le e^{\K(\mu, \lambda_n)} - 1 \le \eta_0 \le \tilde\eta_0, 
$$
for $n \ge n_0$. Here $\tilde\eta_0$ is the largest possible number in Lemma \ref{lemNearCirc} for $\tilde n$, $\tilde A$, and we have used the fact that $\tilde A \le 2$, so $\eta_0 \le \tilde\eta_0$. We see that 
\begin{equation}\label{eq117}
\left|\frac{\tilde\phi_{\lambda_n, k}^*(\lambda_n)}{\tilde\phi_{\lambda_n, k}^*(z)} - 1\right| = \left|\frac{\tilde\phi_{\tilde\lambda, \tilde n}^*(\tilde \lambda)}{\tilde\phi_{\tilde \lambda, \tilde n}^*(z)} - 1\right| \lesssim e^{2\tilde A}\sqrt{\K(\nu_{\tilde \lambda, \tilde n}, \tilde \lambda)} \lesssim \sqrt{\K(\mu, \lambda_n)},
\end{equation}
for every $z \in \partial \Omega_{\tilde n}$, where $\Omega_{\tilde n}$ is a domain in $\C$ with a piece-wise smooth boundary $\partial \Omega_{\tilde n}$ such that $B(\zeta, 2/n) \subset \Omega_{\tilde n} \subset B(\zeta, 8/n)$. In the last chain of inclusions we have used  the fact that $\tilde A/\tilde n = 2/n$, $4\tilde A/\tilde n = 8/n$. Then from the maximum modulus principle for the function $1/\tilde\phi^*_{\lambda_n, k}$ in the domain $\Omega_{\tilde n}$, we obtain
\begin{equation}\label{eq115}
\left|\frac{\tilde\phi_{\lambda_n, k}^*(\lambda_n)}{\tilde\phi_{\lambda_n, k}^*(z)} - 1\right| \lesssim \sqrt{\K(\mu, \lambda_n)},
\qquad z \in \Omega_{\tilde n}.
\end{equation}
In other words, \eqref{eq117} holds everywhere in $\Omega_{\tilde n}$, not just at the boundary $\partial \Omega_{\tilde n}$. We used here the fact that $\tilde\phi_{\lambda_n, k}^*$ has no zeroes in $B(\zeta, 8/n)$, hence the function $1/\tilde\phi^*_{\lambda_n, k}$ is analytic in $\Omega_{\tilde n} \subset B(\zeta, 8/n)$. Since we also have $\Omega_{\tilde n} \supset B(\zeta, 2/n)$, relations \eqref{eq115bis}, \eqref{eq68} follow from \eqref{eq115}. \qed

\medskip

The bounds \eqref{eq115bis}, \eqref{eq68} will be among the main ingredients in the proofs of Theorem \ref{thm1} and Theorem \ref{thm2}.

\medskip

\section{Proof of Theorem \ref{thm1}}
\noindent Define the nonnegative functions $\eta_n$, $\etat_n$ in the open unit disk $\D$ by
\begin{align}
\eta_n(z) &= \frac{1}{n}\sum_{k=0}^{n-1} \K(\mu_k, z), \label{etan}\\
\etat_n(z) &= \frac{1}{n}\sum_{k=0}^{n-1}\log\frac{1}{1 - |f_k(z)|}. \label{etatn}
\end{align}
\begin{Lem}\label{l31}
Let $\mu \in \szc$. We have 
$\sup_{z \in \Gamma_\zeta} \eta_n(z) \to 0$ as $n \to \infty$ for every $\zeta \in \T$ such that  \eqref{eq21} holds.
\end{Lem}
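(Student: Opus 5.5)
The plan is to handle the part of $\Gamma_\zeta$ near the vertex with the monotonicity \eqref{decayentropy} and the compact remainder with a Dini-type argument. Everything rests on two facts about the sequence $\K(\mu_k,z)$: it is nonincreasing in $k$, and for each fixed $z\in\D$ it tends to $0$.

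\emph{The tail identity.} Schur's algorithm \eqref{eq41} started from $f_n$ produces $f_n, f_{n+1}, \dots$. So the Schur iterates of $\mu_n$ are $\{f_{n+j}\}_{j\ge0}$. Each $\mu_n$ is in $\szc$, because $\sum_{j}|f_{n+j}(0)|^2<\infty$ and Szeg\H{o}'s theorem \eqref{szego} applies. Formula \eqref{entf} for $\mu_n$ then gives
$$
\K(\mu_n,z)=\sum_{k=n}^{\infty}\log\Bigl(1+(1-|z|^2)\frac{|f_k(z)|^2}{1-|f_k(z)|^2}\Bigr),\qquad z\in\D .
$$
This is the tail of the series \eqref{entf} for $\K(\mu,z)$, which converges because $\K(\mu,z)<\infty$. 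Hence $\K(\mu_n,z)\downarrow 0$ as $n\to\infty$ for every fixed $z\in\D$, and the monotonicity is \eqref{decayentropy}. Each $z\mapsto\K(\mu_n,z)=\log\Pp(\mu_n,z)-\Pp(\log w_n,z)$ is continuous in $\D$, being the logarithm of a positive harmonic function minus a harmonic function.

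\emph{Splitting the angle.} Fix $\eps>0$. By \eqref{eq21} there is $\delta>0$ with $\K(\mu,z)\le\eps$ for all $z\in\Gamma_\zeta$ with $|z-\zeta|<\delta$. For such $z$, \eqref{decayentropy} gives $\K(\mu_k,z)\le\K(\mu,z)\le\eps$ for every $k$, so $\eta_n(z)\le\eps$ for all $n$.

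\emph{The compact part.} The set $G_\delta=\Gamma_\zeta\cap\{|z-\zeta|\ge\delta\}$ is a compact subset of $\D$. On $G_\delta$, the continuous functions $\K(\mu_k,\cdot)$ decrease pointwise to $0$. Dini's theorem then gives $\sup_{G_\delta}\K(\mu_k,\cdot)\to0$ as $k\to\infty$. Choose $N$ with $\sup_{G_\delta}\K(\mu_k,\cdot)\le\eps$ for $k\ge N$. Then for $z\in G_\delta$,
$$
\eta_n(z)\le \frac{N}{n}\max_{G_\delta}\K(\mu,\cdot)+\eps ,
$$
and this is at most $2\eps$ for $n$ large. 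Combining the two regions gives $\sup_{\Gamma_\zeta}\eta_n\le2\eps$ for large $n$, which proves the lemma.

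\emph{Main point to check.} The step that needs care is the tail identity: one has to confirm that \eqref{entf} applies to $\mu_n$, i.e.\ that $\mu_n\in\szc$ and that its Schur iterates are the shifted ones. The rest is monotonicity and a compactness argument.
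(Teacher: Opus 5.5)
Your argument is correct, and its overall structure matches the paper's proof: you use the monotonicity \eqref{decayentropy} together with \eqref{eq21} near the vertex. On the remaining compact part of $\Gamma_\zeta$ you use uniform convergence $\K(\mu_k,\cdot)\to0$, followed by Ces\`aro averaging.

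The real difference is how you get that uniform convergence on compacts. You read $\K(\mu_n,z)$ as the tail of the series \eqref{entf}, after correctly checking that $\mu_n\in\szc$ with Schur iterates $f_n,f_{n+1},\dots$. This gives pointwise monotone convergence to $0$, which you upgrade by Dini's theorem using continuity of $\K(\mu_n,\cdot)$.

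The paper instead drops the nonpositive term $\log(1-|zf_n(z)|^2)$ in \eqref{entropyschur-n}, bounds the Poisson kernel by $\frac{1+|z|}{1-|z|}$, and applies the Szeg\H{o} sum rule \eqref{szego} to $\mu_n$. This yields the explicit bound
\[
\sup_{|z|\le r}\K(\mu_n,z)\le\frac{1+r}{1-r}\sum_{k\ge n}\log\frac{1}{1-|f_k(0)|^2}.
\]

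The paper's route is quantitative, with the rate governed by the tail of $\sum_k|a_k|^2$. For this step it needs no continuity or compactness argument, and uses only the classical Szeg\H{o} theorem rather than the formula \eqref{entf} from \cite{BD21}. Your route is qualitative but equally short once \eqref{entf} is available. It also makes \eqref{decayentropy} transparent, as a statement about tails of a convergent series of nonnegative terms.
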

\beginpf The estimate \eqref{decayentropy} gives $\eta_n (z) \le \K(\mu , z)$ for any $z \in \D$ and $n \ge 1$. Hence, it is sufficient to prove that $\sup\{\K(\mu_n , z): |z| < r \} \to 0$ as $n \to \infty$, for any $0<r<1$. Observe that \eqref{entropyschur} gives 
$$
\K(\mu_n , z) \le - \Pp(\log (1- |f_n|^2), z) \le -\frac{1+|z|}{1-|z|}\int_{\T}\log(1-|f_n|^2)\,dm
$$
and one only needs to check that the last integral tends to $0$ as $n \to \infty$. In turn, this follows from Szeg\H{o} formula \eqref{szego} for the pair $\mu_n$, $f_n$, i.e., from
\begin{equation}\label{eq57}
\int_{\T}\log(1-|f_n|^2)\,dm = \sum_{k=n}^\infty \log (1 - |f_k(0)|^2).
\end{equation}
Since  by  \eqref{szego} we have $\sum_{k\ge 0} |f_k(0)|^2 < \infty$ for every $\mu\in\szc$, the right hand side in \eqref{eq57} indeed tends to $0$ as $n \to \infty$. \qed

\medskip

\begin{Lem}\label{l32}
Let $\mu \in \szc$, and let
$$
\Gamma_{\zeta, n} = \Gamma_{\zeta} \cap \{z \in \C:\, |z - \zeta| \ge 1/n\}, \qquad n \ge 1.
$$
We have $\sup_{z \in \Gamma_{\zeta, n}} \etat_n(z) \to 0$ as $n \to \infty$ for every $\zeta \in \T$ such that  \eqref{eq21} holds.
\end{Lem}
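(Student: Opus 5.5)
We will bound $\etat_n$ through the entropy formula \eqref{entf}, applied to the measures $\mu_k$ rather than to $\mu$. Applying \eqref{entf} to the pair $\mu_k$, $f_k$ and keeping only the first summand, we get for every $z\in\D$
$$
\K(\mu_k, z) \ge \log\Bigl(1 + (1-|z|^2)\frac{|f_k(z)|^2}{1-|f_k(z)|^2}\Bigr).
$$
Two regimes have to be distinguished. If $|f_k(z)|$ is bounded away from $1$, the right-hand side is comparable to $(1-|z|)|f_k(z)|^2$. It is also comparable to $(1-|z|)\log\frac{1}{1-|f_k(z)|}$ up to a factor depending on $|f_k(z)|$, and this is uniform once $|f_k(z)|\le 1/2$. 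If $|f_k(z)|$ is close to $1$, the right-hand side is of order $\log\bigl(1+\frac{1-|z|}{1-|f_k(z)|}\bigr)$.

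The plan is to use the elementary inequality
$$
\log\frac{1}{1-t} \le C_\delta\, \frac{1}{1-|z|}\log\Bigl(1+(1-|z|^2)\frac{t^2}{1-t^2}\Bigr), \qquad t\in[0,1),
$$
valid for $1-|z|\ge \delta$ with $C_\delta$ depending only on $\delta$. Combined with \eqref{decayentropy}, it would give $\etat_n(z)\lesssim \eta_n(z)/(1-|z|)$. This is not directly good enough: on $\Gamma_{\zeta,n}$ we only know $1-|z|\gtrsim 1/n$, and $\sup\eta_n$ does not decay as fast as $1/n$.

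So I split the average. Fix a large $M$ and write $\etat_n(z)=\frac1n\sum_{k<n}\log\frac{1}{1-|f_k(z)|}$. For indices $k$ with $|f_k(z)|\le 1/2$, the summand is at most $C|f_k(z)|^2$. Using $\log(1+x)\ge x/2$ for $x\le 1$, we get $(1-|z|)|f_k(z)|^2\lesssim \K(\mu_k,z)$. This needs a better tool than $1/(1-|z|)$ loss.

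Instead I would use \eqref{entf} summed over the full tail. Together with \eqref{decayentropy}, it gives
$$
\sum_{k\ge 0}\log\Bigl(1+(1-|z|^2)\frac{|f_k(z)|^2}{1-|f_k(z)|^2}\Bigr) = \K(\mu,z).
$$
Hence the number of indices $k$ with $|f_k(z)|^2\ge s$ is at most $\K(\mu,z)/\log(1+(1-|z|^2)s)$, and on those indices $\log\frac{1}{1-|f_k|}$ is controlled by the summand divided by $(1-|z|)$. Summing,
$$
\etat_n(z) \lesssim s + \frac{1}{n}\cdot\frac{\K(\mu,z)}{1-|z|}\cdot C_s .
$$
On $\Gamma_{\zeta,n}$ we have $1-|z|\asymp|z-\zeta|\ge 1/n$. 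Hence
$$
\etat_n(z)\lesssim s + C_s\,\K(\mu,z)\,\frac{1}{n(1-|z|)} \le s+C_s\K(\mu,z).
$$
The second term is small either because $z$ is close to $\zeta$, where $\K(\mu,z)\to0$ by \eqref{eq21}, or because $1-|z|\ge\delta$ with $n$ large, where the factor $\frac1{n(1-|z|)}\le \frac{1}{n\delta}$ is small and $\K(\mu,z)$ is bounded on $\Gamma_\zeta\cap\{1-|z|\ge\delta\}$. Letting $n\to\infty$, then $\delta\to0$, then $s\to 0$ finishes the argument.

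The main obstacle is making the per-index estimate uniform near $|f_k(z)|\to1$. There $\log\frac{1}{1-t}$ is unbounded while the entropy summand $\log(1+\frac{1-|z|}{1-t})$ grows only logarithmically. The ratio of the two stays bounded by $C/(1-|z|)$ only if $(1-|z|)\ge (1-t)$, roughly. In the opposite case the ratio is about $\log\frac1{1-t}\big/\log\frac{1-|z|}{1-t}\le C\log\frac{e}{1-|z|}$. This costs only a logarithmic factor, so the bound becomes $\K(\mu,z)\log(e/(1-|z|))/(n(1-|z|))$, which is still $o(1)$ on $\Gamma_{\zeta,n}$ as above. I would check this elementary two-variable inequality carefully first.
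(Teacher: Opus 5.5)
Your final argument is correct, but it is organized differently from the paper's proof.

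\textbf{How the paper argues.} The paper first restricts to points with $\K(\mu,z)\le\log 2$. This forces every summand in \eqref{entf} to have $x_k:=(1-|z|^2)\frac{|f_k(z)|^2}{1-|f_k(z)|^2}\le 1$. Indices with $|f_k(z)|\le 1/2$ are then handled by Cauchy--Schwarz, $\frac1n\sum|f_k|\le(\frac1n\sum|f_k|^2)^{1/2}$. Indices with $|f_k(z)|>1/2$ are handled by $\log\frac1{1-t}\le\frac{2}{1-t^2}$. Using $\frac1n\lesssim 1-|z|^2$, this gives $\etat_n(z)\lesssim\sqrt{\K(\mu,z)}$ near $\zeta$. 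The part of $\Gamma_\zeta$ inside a fixed disc $\{|z|\le r\}$, where $\K$ need not be small, is treated separately: the paper shows $f_n\to0$ uniformly on compacts via Jensen's inequality and the Szeg\H{o} tail \eqref{szego}.

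\textbf{How your route differs.} You fix a threshold $s$ and bound the indices with $|f_k(z)|^2<s$ trivially. That bound is $\log\frac{1}{1-\sqrt s}$, not $s$, but this is harmless. For the remaining indices you sum a per-index inequality over all $k\ge0$ via \eqref{entf}. This yields the global bound
\[
\etat_n(z)\le\log\frac{1}{1-\sqrt s}+\frac{C\,\K(\mu,z)}{s\,n(1-|z|)},\qquad z\in\D .
\]
Because you keep the factor $1/(n(1-|z|))$ and never need $\K\le\log 2$, the compact part comes for free, with no separate Schur-function argument. Optimizing in $s$ even gives $\etat_n(z)\lesssim\bigl(\K(\mu,z)/(n(1-|z|))\bigr)^{1/3}$ whenever the right-hand side is small. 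The paper's route gives the sharper $\sqrt{\K(\mu,z)}$ near $\zeta$ and avoids the regime $x_k>1$ altogether.

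\textbf{The step you defer.} The inequality $(1-|z|)\log\frac1{1-t}\le C_s\log(1+x)$, with $x=(1-|z|^2)\frac{t^2}{1-t^2}$ and $t\in[\sqrt s,1)$, holds with $C_s\lesssim 1/s$ and \emph{no} logarithmic loss.
\begin{itemize}
\item If $x\le1$: then $\log(1+x)\ge x/2\ge\frac{(1-|z|)s}{2(1-t^2)}$, while $\log\frac1{1-t}\le\frac{2}{1-t^2}$.
\item If $x>1$: then $t^2>1/3$ and $u:=1-t\le1-t^2<2(1-|z|)$, so $x\ge\frac{1-|z|}{6u}$. Split
\[
(1-|z|)\log\tfrac1u=(1-|z|)\log\tfrac{1-|z|}{u}+(1-|z|)\log\tfrac{1}{1-|z|}.
\]
The first term is $\lesssim\log(1+x)$. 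The second is at most $1/e\lesssim\log 2\le\log(1+x)$.
\end{itemize}

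\textbf{Errors in your side remarks.}
\begin{itemize}
\item Your heuristic in the last paragraph has the two regimes interchanged.
\item Your fallback claim is wrong. A bound of the form $\K(\mu,z)\log\frac{e}{1-|z|}/(n(1-|z|))$ would \emph{not} be $o(1)$ on $\Gamma_{\zeta,n}$: at $1-|z|\asymp 1/n$ it is $\asymp\K(\mu,z)\log n$, and \eqref{eq21} gives no rate. The argument works precisely because this loss does not occur.
\item The claim that $\log\frac1{1-|f_k|}\le C|f_k|^2$ for $|f_k|\le1/2$ is false; the summand is $\asymp|f_k|$. You abandoned that line, so it does not affect the proof.
\end{itemize}
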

\beginpf We first deal with $z \in  \Gamma_{\zeta, n}$ such that $\K(\mu, z) \le \log 2$. Let us divide the set of indices $I_n =\{k: 0 \le k \le n-1\}$ into two subsets 
\begin{align}
I_{n, s} 
&= \left\{k \in I_n:\;\;  |f_k(z)| \le 1/2\right\}, \label{eqIns}\\ 
I_{n, b} 
&= \left\{k \in I_n:\;\;  |f_k(z)| > 1/2\right\}. \label{eqInb}
\end{align}
For $k \in I_{n, s}$, we have
\begin{align}
\frac{1}{n}\sum_{k \in I_{n,s}}\log\frac{1}{1 - |f_k(z)|} 
&\lesssim \frac{1}{n} \sum_{k=0}^{n-1} |f_k(z)| \le \sqrt{\frac{1}{n} \sum_{k=0}^{n-1} |f_k(z)|^2}\notag\\
& \lesssim \sqrt{(1-|z|^2)\sum_{k=0}^{n} |f_k(z)|^2} \lesssim \sqrt{\K(\mu, z)},\label{eq51}
\end{align}
where in the last estimate we have used \eqref{entf} and the fact that $x \lesssim \log (1+ x)$ for $x \in [0, 1]$. Next, for $k \in I_{n, b}$ we have
$$
\frac{1}{n}\sum_{k \in I_{n,b}}\log\frac{1}{1 - |f_k(z)|} \le \frac{1}{n}\sum_{k \in I_{n,b}}\frac{1 + |f_k(z)|}{1 - |f_k(z)|^2} \lesssim \sum_{k \in I_{n,b}}\frac{(1-|z|^2)|f_k(z)|^2}{1 - |f_k(z)|^2}.
$$
Each summand in the last sum satisfies  
$$
0 \le \frac{(1-|z|^2)|f_k(z)|^2}{1 - |f_k(z)|^2} \le 1,
$$
for otherwise we cannot have $\K(\mu, z) \le \log 2$, see \eqref{entf}. Thus, one can use again the elementary inequality $x \lesssim \log(1+x)$ for $x \in [0, 1]$ and obtain
\begin{equation}\label{eq52}
\frac{1}{n}\sum_{k \in I_{n,b}}\log\frac{1}{1 - |f_k(z)|} \lesssim \sum_{k \in I_{n,b}}\log \left(1 +\frac{(1-|z|^2)|f_k(z)|^2}{1 - |f_k(z)|^2}\right) \le \K(\mu, z).
\end{equation}
Combining \eqref{eq51}, \eqref{eq52}, we arrive at
\begin{align}
\etat_n(z) 
&= \frac{1}{n}\sum_{k \in I_{n,s}}\log\frac{1}{1 - |f_k(z)|} + \frac{1}{n}\sum_{k \in I_{n,b}}\log\frac{1}{1 - |f_k(z)|}\notag \\
&\lesssim \sqrt{\K(\mu, z)} + \K(\mu, z) \lesssim \sqrt{\K(\mu, z)},\label{eq53}
\end{align}
for every $z \in  \Gamma_{\zeta, n}$ such that $\K(\mu, z) \le \log 2$. Moreover, the constants in \eqref{eq53} are universal. Thus, the result will follow from \eqref{eq21} if we show that $\etat_n$ tends to zero uniformly on compact subsets of $\D$. In turn, this will follow if we show that 
$\max_{|z|\le r}|f_n(z)| \to 0$ as $n\to \infty$ for every $r \in (0, 1)$. So, let us take $z$ with $|z|\le r< 1$ and estimate
\begin{multline*}
|f_n(z)|^2 \le \log\frac{1}{1 - |f_n(z)|^2} = -\log\left(1 - |\Pp(f_n, z)|^2\right) \le \\
\le -\Pp\left(\log(1-|f_n|^2), z\right) \lesssim -\frac{1+r}{1-r}\int_{\T} \log (1-|f_n|^2)\,dm,
\end{multline*}
where we used Jensen's inequality for the convex function $x \mapsto -\log(1-x^2)$ and the basic bound for the Poisson kernel. Then, as in the proof of Lemma \ref{l31},
$$
\lim_{n \to \infty}\int_{\T} \log (1-|f_n|^2)\,dm = \lim_{n \to \infty}\sum_{k=n}^\infty \log (1 - |f_k (0)|^2) = 0,
$$
and the result follows. \qed

\medskip

\begin{Lem}\label{l1}
Let $\mu = w\,dm +\mus$ be a measure in $\szc$, let $D_\mu$ be the corresponding Szeg\H{o} function, and let $\{\phi_n\}_{n \ge 0}$ be the sequence of orthonormal polynomials generated by $\mu$, see \eqref{eqonb}. We have 
\begin{equation}\label{eqKH}
\phi_n^* D_{\mu} = \frac{O_{n}}{1 - zb_n f_n}, \qquad b_n = \frac{\phi_n}{\phi_n^*}, \qquad n \ge 0,
\end{equation}
for the outer function $O_{n}$ defined by the conditions $O_{n}(0) > 0$ and $|O_{n}|^2 = 1-|f_n|^2$ almost everywhere on $\T$. 
\end{Lem}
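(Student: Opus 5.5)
The plan is to derive the factorization from the Bernstein--Szeg\H{o}/Geronimus correspondence between $\mu$ and the Schur iterate $f_n$. The key classical fact (see Simon \cite{Simonbook1}, the formula relating $f$ to $f_n$ via the orthogonal polynomials) is the identity
\begin{equation}\notag
zf(z) = \frac{z\phi_n(z)\cdot z f_n(z)\,\ov{\phantom{x}}\!\!\!\!\!\!\;\;+\;\cdots}{\cdots},
\end{equation}
more precisely, writing the Carath\'eodory function $F = \frac{1+zf}{1-zf}$, one has the Geronimus-type representation
$$
\frac{1 - zf(z)}{\text{const}}\;\cdot\;\bigl(\phi_n^*(z) - z f_n(z)\phi_n(z)\bigr) = \text{(something with no zeroes)},
$$
and the cleanest route is the known formula
$$
1 - zf(z) = \frac{\prod_{k<n}\sqrt{1-|a_k|^2}}{\phi_n^*(z) - z f_n(z)\phi_n(z)}\quad\text{(up to normalization)},
$$
which I would verify by induction on $n$ using the Schur step \eqref{eq41} and the Szeg\H{o} recurrence \eqref{eqreq}, with $a_n=f_n(0)$. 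Concretely, I would check that $Q_n := \phi_n^* - zf_n\phi_n$ satisfies $Q_{n+1} = Q_n\cdot \frac{1-\ov{a_n}f_n}{\sqrt{1-|a_n|^2}}\cdot(\ldots)$; substituting $zf_{n+1} = (f_n-a_n)/(1-\ov{a_n}f_n)$ and $\phi_{n+1},\phi_{n+1}^*$ from \eqref{eqreq} gives $Q_{n+1} = \sqrt{1-|a_n|^2}\,Q_n/(1-\ov{a_n}f_n)$ after a short computation.

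Given this, $\phi_n^*(1 - zb_nf_n) = Q_n$, so the claim \eqref{eqKH} becomes $D_\mu Q_n = O_n$, i.e. $D_\mu Q_n$ is outer, positive at $0$, with boundary modulus squared $1-|f_n|^2$. For the modulus: on $\T$, \eqref{eq56} gives $w = (1-|f|^2)/|1-\xi f|^2$, and the analogous boundary identity from the product formula, $|Q_n|^2(1-|f|^2)/|1-\xi f|^2 \cdot |Q_n|^{-2}\ldots$; more directly, I would show by the same induction that $(1-|f_n|^2)/|Q_n|^2 = w$ a.e. on $\T$ (this is just the statement that $\mu_n$'s absolutely continuous part transforms correctly; it follows by iterating the one-step identity $1-|zf_{n+1}|^2 = (1-|a_n|^2)(1-|f_n|^2)/|1-\ov{a_n}f_n|^2$ on $\T$ together with the recursion for $Q_n$). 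Hence $|D_\mu Q_n|^2 = 1-|f_n|^2$ a.e.

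For outerness: $D_\mu$ is outer; $Q_n = \phi_n^*(1-zb_nf_n)$ where $\phi_n^*$ has no zeros in $\ov{\D}$ minus possibly $\T$ (zeros outside $\D$, so $\phi_n^*$ is outer), and $1-zb_nf_n$ has positive real part in $\D$ since $|zb_nf_n|<1$, hence is outer (Corollary 4.8 in \cite{Garnett}, as used for $1-zf$). Products of outer functions are outer. Positivity at $0$: $Q_n(0)=\phi_n^*(0)>0$ and $D_\mu(0)>0$. By uniqueness of outer functions with prescribed modulus and positive value at $0$, $D_\mu Q_n = O_n$, proving \eqref{eqKH}.

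The main obstacle is the algebraic induction establishing the recursion for $Q_n$ and the boundary identity $w|Q_n|^2 = 1-|f_n|^2$; everything else is standard outer-function theory. If the direct computation is messy, I would instead cite Simon's formula expressing $f$ through $f_n$ via the transfer matrix (Section 3.2 of \cite{Simonbook1}) and read off both facts at once.
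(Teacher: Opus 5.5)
Your proof is correct. Its skeleton is the paper's: both $\phi_n^*D_\mu$ and $O_n/(1-zb_nf_n)$ are outer with positive value at $0$, so everything reduces to the boundary identity $w|\phi_n^*|^2=(1-|f_n|^2)/|1-\xi b_nf_n|^2$ a.e. on $\T$.

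The difference is that the paper cites this identity as Khrushchev's formula \cite{Kh01}, whereas you prove it, and your induction works. The recursion $Q_{n+1}=\sqrt{1-|a_n|^2}\,Q_n/(1-\overline{a_n}f_n)$ for $Q_n=\phi_n^*-zf_n\phi_n$ holds identically in $\D$. To see this, expand using \eqref{eqreq}, its reflected form $\phi_{n+1}^*=(\phi_n^*-a_nz\phi_n)/\sqrt{1-|a_n|^2}$, and $zf_{n+1}=(f_n-a_n)/(1-\overline{a_n}f_n)$. Now take boundary moduli in this recursion and combine with the M\"obius identity $1-|f_{n+1}|^2=(1-|a_n|^2)(1-|f_n|^2)/|1-\overline{a_n}f_n|^2$, valid a.e. on $\T$. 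This propagates $w|Q_n|^2=1-|f_n|^2$ from the base case $n=0$, which is exactly \eqref{eq56}. The step is multiplicative, so no division by $Q_n$ is needed.

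The paper's route buys brevity. Yours buys a self-contained argument and the explicit formula
\[
Q_n=(1-zf)\prod_{k<n}\frac{\sqrt{1-|a_k|^2}}{1-\overline{a_k}f_k}.
\]
This formula also gives outerness of $D_\mu Q_n$ directly, since $D_\mu(1-zf)$ is outer and each factor $1-\overline{a_k}f_k$ has positive real part and is bounded above and away from zero.

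You should, however, delete the preliminary displays. The first one is garbled. The ``cleanest route'' formula $1-zf=\prod_{k<n}\sqrt{1-|a_k|^2}/Q_n$ is false even up to a constant normalization: at $n=0$ it would force $(1-zf)^2$ to be constant, i.e.\ $f\equiv 0$. The product formula above is what your recursion actually yields.
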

\beginpf Recall that the reflected polynomials $\phi_n^*$ have no zeroes in the closed unit disk, see, e.g., Theorem 1.7.1 in Simon \cite{Simonbook1}. Therefore, $b_n$ is a finite Blaschke product, $\Re(1 - zb_n f_n) \ge 0$ in $\D$, and the function $1 - zb_n f_n$ is outer (see Corollary 4.8 in Section II.4 in Garnett \cite{Garnett}). From here we see that functions $\phi_n^* D_{\mu}$ and $O_{n} (1 - zb_n f_n)^{-1}$ are outer as well. Note that  they have positive value at $0$. Thus, we only need to check that 
\begin{equation}\label{eq61}
|\phi_n^* D_{\mu}| = \left|\frac{O_{n}}{1 - zb_n f_n}\right|,
\end{equation}
almost everywhere on $\T$. Taking the square and using the definitions of $D_\mu$, $O_n$, we see that \eqref{eq61} is equivalent to 
$$
|\phi_n^*|^2 w = \frac{1-|f_n|^2}{|1 - zb_n f_n|^2}
$$
almost everywhere on $\T$. This formula is due to S.\,Khrushchev, see Theorem 2 on page 173 in \cite{Kh01}. \qed

\medskip

\begin{Lem}\label{lemAv}
We have 
\begin{align}
\frac{1}{n}\sum_{k=0}^{n-1} \Bigl| \log(1 - z b_k(z) f_k(z)) \Bigr| \le \etat_n(z),& \label{eq42}\\
\frac{1}{n}\sum_{k=0}^{n-1}\log|O_k(z)|^{-2} \le \eta_n(z) + \etat_n(z),& \label{eq43}
\end{align}
for every $z \in \D$. In \eqref{eq42}, we deal with the main branch of the logarithm, $\log 1 = 0$. 
\end{Lem}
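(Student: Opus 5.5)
Both inequalities will be proved termwise, for each fixed $k$ and $z \in \D$, and then averaged over $0 \le k \le n-1$. No earlier asymptotic results are needed. The first inequality uses only the bound $|z b_k(z) f_k(z)| \le |f_k(z)|$. The second uses only the outer-function identity for $O_k$ together with formula \eqref{entropyschur-n}.

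For \eqref{eq42}, note that $b_k$ is a finite Blaschke product, so $|b_k(z)| \le 1$ in $\D$. Hence $w := z b_k(z) f_k(z)$ satisfies $|w| \le x := |f_k(z)| < 1$. For the main branch we have the expansion $\log(1-w) = -\sum_{m \ge 1} w^m/m$. Taking absolute values termwise gives
$$
|\log(1-w)| \le \sum_{m\ge1}\frac{x^m}{m} = \log\frac{1}{1-x} = \log\frac{1}{1-|f_k(z)|}.
$$
Dividing by $n$ and summing over $k$ gives exactly $\etat_n(z)$, by \eqref{etatn}.

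For \eqref{eq43}, $O_k$ is outer with $|O_k|^2 = 1-|f_k|^2$ on $\T$, so $\log|O_k(z)|^2 = \Pp(\log(1-|f_k|^2), z)$. Formula \eqref{entropyschur-n} then gives
$$
\log|O_k(z)|^{-2} = -\Pp(\log(1-|f_k|^2), z) = \K(\mu_k, z) + \log\frac{1}{1-|zf_k(z)|^2}.
$$
The second term is bounded using $|zf_k(z)| \le |f_k(z)|$:
$$
\log\frac{1}{1-|zf_k(z)|^2} \le \log\frac{1}{(1-|f_k(z)|)(1+|f_k(z)|)} \le \log\frac{1}{1-|f_k(z)|}.
$$
Averaging over $k$ and using the definitions \eqref{etan}, \eqref{etatn} yields $\eta_n(z) + \etat_n(z)$.

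There is no real obstacle; the argument is elementary. The only points needing care are the two justifications already used above. First, $\log|O_k|^2$ coincides with the Poisson integral of its boundary values because $O_k$ is outer; this requires $\log(1-|f_k|^2) \in L^1(\T)$, which holds since $\mu_k \in \szc$ by \eqref{szego}. Second, the main branch of $\log(1-w)$ is given by the power series because $|w| < 1$.
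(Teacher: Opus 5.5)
Your proof is correct and follows essentially the same route as the paper. For \eqref{eq42} both arguments use the termwise bound $|\log(1-zb_k(z)f_k(z))|\le\log\frac{1}{1-|f_k(z)|}$. For \eqref{eq43} both use the outer-function identity $\log|O_k(z)|^{-2}=-\Pp(\log(1-|f_k|^2),z)$, then \eqref{entropyschur-n}, then $-\log(1-|zf_k(z)|^2)\le\log\frac{1}{1-|f_k(z)|}$; the only cosmetic difference is that you justify the logarithm bound via the power series, which the paper takes as evident.
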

\beginpf Inequality \eqref{eq42} is immediate from the definition of $\etat_n$ and the estimate
$$
\Bigl| \log(1 - z b_k(z) f_k(z)) \Bigr| \le \log\frac{1}{1 - |z b_k(z) f_k(z)|} \le  \log\frac{1}{1 - |f_k(z)|}, \qquad z \in \D.
$$
To prove \eqref{eq43}, we write 
\begin{align*}
\frac{1}{n}\sum_{k=0}^{n-1}\log|O_k(z)|^{-2}
&= \frac{1}{n}\sum_{k=0}^{n-1}\Pp(\log|O_k|^{-2}, z)\\
&= -\frac{1}{n}\sum_{k=0}^{n-1}\Pp(\log(1 - |f_k|^2), z)\\
&= \frac{1}{n}\sum_{k=0}^{n-1}\K(\mu_k, z) - \frac{1}{n}\sum_{k=0}^{n-1}\log (1 - |zf_k(z)|^2)\\
&\le \eta_n(z) + \etat_n(z).
\end{align*}
First equality here holds because $O_k$ is an outer function for each $k \ge 0$. In the second line we used the fact that that $|O_k|^2 = 1 - |f_k|^2 $ on $\T$, $k \ge 0$. Equality in the third line follows from \eqref{entropyschur-n}, and the last estimate from $-\log (1 - |zf_k(z)|^2) \le -\log (1 - |f_k(z)|)$. \qed

\medskip

\noindent Below we will use the bound
\begin{equation}\label{eq45}
\frac{1}{n}\sum_{k=0}^{n-1}|D_\mu(z)\phi_{k}^*(z)|^2 \le 1 + \delta_n(\mu,z), \qquad z \in \D,
\end{equation}
where 
$$
\delta_n(\mu,z) = 2\sqrt{\frac{e^{\K(\mu, z)} - 1}{n(1-|z|^2)}} + 4 \frac{e^{\K(\mu, z)} - 1}{n(1-|z|^2)}.
$$
For the proof of \eqref{eq45}, see Lemma 2.2 in \cite{B21}.
\begin{Lem}\label{lemSt-n}
We have
$$
\frac{1}{n}\sum_{k=0}^{n-1}\Bigl||\phi_k^*(z) D_{\mu}(z)|^2 - 1\Bigr| \lesssim \delta_n(\mu,z) + \eta_n(z) + \etat_n(z), 
$$
for every $z \in \D$. 
\end{Lem}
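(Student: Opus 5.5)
The plan is to reduce the $L^1$-type Cesàro average to the upper bound \eqref{eq45} and to a lower bound on the averages of $\log|\phi_k^* D_\mu|^2$. Put $x_k = |\phi_k^*(z)D_\mu(z)|^2 > 0$. We use the elementary inequality
$$
|x-1| \le (x - 1 - \log x) + |\log x|.
$$
It holds because $x - 1 - \log x \ge 0$ and $|x-1| \le |x-1-\log x| + |\log x|$. Averaging over $0 \le k \le n-1$ gives
$$
\frac1n\sum_{k=0}^{n-1}|x_k - 1| \le \Bigl(\frac1n\sum_{k=0}^{n-1} x_k - 1\Bigr) - \frac1n\sum_{k=0}^{n-1}\log x_k + \frac1n\sum_{k=0}^{n-1}|\log x_k| .
$$
By \eqref{eq45}, the first bracket is at most $\delta_n(\mu,z)$. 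So it remains to bound $\frac1n\sum_k|\log x_k|$ by $\eta_n + \etat_n$, up to a constant; the middle term is then also controlled by this quantity.

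Lemma \ref{l1} gives $\log x_k = \log|O_k(z)|^2 - 2\log|1 - zb_k(z)f_k(z)|$. Hence
$$
|\log x_k| \le \log|O_k(z)|^{-2} + 2\,\bigl|\log(1 - zb_kf_k)(z)\bigr| .
$$
Here we need $\log|O_k(z)|^{-2} \ge 0$. This holds because $O_k$ is outer with $|O_k| \le 1$ on $\T$, so $|O_k(z)| \le 1$ in $\D$. Averaging and applying Lemma \ref{lemAv}, namely \eqref{eq43} for the first term and \eqref{eq42} for the second, yields
$$
\frac1n\sum_{k=0}^{n-1}|\log x_k| \le \eta_n(z) + 3\etat_n(z).
$$
Putting everything together:
$$
\frac1n\sum_{k=0}^{n-1}\bigl||\phi_k^*(z)D_\mu(z)|^2-1\bigr| \le \delta_n(\mu,z) + 2\eta_n(z) + 6\etat_n(z).
$$

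There is no real obstacle here. The only point needing care is the one-sided use of \eqref{eq45}: it controls only the excess of the mean of the $x_k$ over $1$. The decomposition $|x-1| \le (x-1-\log x) + |\log x|$ is exactly what lets that one-sided bound, together with the two-sided logarithmic bound, control the full $L^1$ deviation. Everything else is a direct substitution of the earlier lemmas.
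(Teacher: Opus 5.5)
Your proof is correct, and it is a cleaner route than the paper's. Both arguments use the same two inputs: the one-sided bound \eqref{eq45} on the mean of $x_k=|\phi_k^*(z)D_\mu(z)|^2$, and the two-sided logarithmic control $\frac1n\sum_k|\log x_k|\le \eta_n(z)+3\etat_n(z)$ from Lemma \ref{l1} and Lemma \ref{lemAv}.

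The paper combines these by splitting the indices into $J_{n,s}$, where $|\log x_k|\le 2$ and hence $|x_k-1|\lesssim|\log x_k|$, and $J_{n,b}$. On $J_{n,b}$ it subtracts the $J_{n,s}$-contribution from \eqref{eq45}, which gives $\frac1n\sum_{k\in J_{n,b}}x_k\le |J_{n,b}|/n+\delta_n+O(\eta_n+\etat_n)$; this is \eqref{eq46}. It then bounds $|J_{n,b}|/n$ by a second splitting according to whether $|f_k(z)|\le 1/2$; this is \eqref{eq47}.

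Your pointwise inequality $|x-1|\le (x-1-\log x)+|\log x|$ does all of this in one step. The nonnegative term $x-1-\log x$ is exactly what an upper bound on the mean of $x_k$ together with a lower bound on the mean of $\log x_k$ controls. So you need no case analysis, and you get the explicit bound $\delta_n+2\eta_n+6\etat_n$.

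What the paper's route buys is the by-products \eqref{eq46} and \eqref{eq47} on the bad index set, which are quoted again in the proof of Lemma \ref{l36}. These also follow immediately from your argument. Since $|\log x_k|>2$ on $J_{n,b}$, Chebyshev gives $|J_{n,b}|/n\le \frac12(\eta_n+3\etat_n)$. Then $\frac1n\sum_{k\in J_{n,b}}x_k\le |J_{n,b}|/n+\frac1n\sum_{k}|x_k-1|$. In fact this is a shorter derivation of \eqref{eq47} than the paper's detour through $I_{n,s}$ and $I_{n,b}$.
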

\beginpf Fix $z \in \D$. Let us divide the set of indices $J_n =\{k: 0 \le k \le n-1\}$ into two subsets, 
\begin{align}
J_{n, s} 
&= \left\{k \in J_n:\;\;  \Bigl|\log|\phi_k^*(z) D_{\mu}(z)|\Bigr| \le 1\right\}, \label{eq71}\\ 
J_{n, b} 
&= \left\{k \in J_n:\;\;  \Bigl|\log|\phi_k^*(z) D_{\mu}(z)|\Bigr| > 1\right\}. \label{eq72}
\end{align}
For $k \in J_{n, s}$, we have
$$
\Bigl||\phi_k^*(z) D_{\mu}(z)|^2 - 1\Bigr| \lesssim \Bigl|\log|\phi_k^*(z) D_{\mu}(z)|^2\Bigr| \le 
\Bigl|\log|O_k(z)|^2\Bigr| + \Bigl|\log|1 - z b_k(z) f_k(z)|^2\Bigr|,
$$
where in the second inequality we have used Lemma \ref{l1}. From \eqref{eq42}, \eqref{eq43} we obtain 
\begin{equation}\label{eq63}
\frac{1}{n}\sum_{k \in J_{n,s}}\Bigl||\phi_k^*(z) D_{\mu}(z)|^2 - 1\Bigr| \lesssim \eta_n(z) + \etat_n(z), \qquad z \in \D.
\end{equation}
To handle indices $k \in J_{n,b}$, we use \eqref{eq45}. Together with \eqref{eq63} it gives 
\begin{align}
\frac{1}{n}\sum_{k \in J_{n,b}}|\phi_k^*(z) D_{\mu}(z)|^2 
&=\frac{1}{n}\sum_{k=0}^{n-1}|D_\mu(z)\phi_{k}^*(z)|^2 - \frac{1}{n}\sum_{k \in J_{n,s}}|\phi_k^*(z) D_{\mu}(z)|^2\notag\\
&\le 1 + \delta_n(\mu,z) - \frac{|J_{n,s}|}{n+1} + O(\eta_n(z) + \etat_n(z))\notag\\
&= \frac{|J_{n,b}|}{n+1} + \delta_n(\mu,z) + O(\eta_n(z) + \etat_n(z)).\label{eq46}
\end{align}
It follows that
\begin{equation}\label{eq77}
\frac{1}{n}\sum_{k \in J_{n,b}}\Bigl||\phi_k^*(z) D_{\mu}(z)|^2 - 1\Bigr| \le 2\frac{|J_{n,b}|}{n} + \delta_n(\mu,z) + O( \eta_n(z) + \etat_n(z)).
\end{equation}
Recall that the sets $I_{n,s}$, $I_{n,b}$ were defined in \eqref{eqIns}, \eqref{eqInb}, respectively. We have
\begin{equation*}
\frac{|J_{n,b}|}{n} = \frac{|J_{n,b} \cap I_{n, s}|}{n} + \frac{|J_{n,b} \cap I_{n, b}|}{n} \le \frac{|J_{n,b} \cap I_{n, s}|}{n} + \frac{|I_{n, b}|}{n}. 
\end{equation*}
Note that 
$$
\etat_n (z) = \frac{1}{n} \sum_{k=0}^{n-1} \log \frac{1}{1-|f_k (z)|} \geq \frac{|I_{n,b}|}{n} \log 2, 
$$
and if $k \in J_{n,b} \cap I_{n,s}$, Lemma \ref{l1} gives $\log |O_k (z)|^{-2} \gtrsim 1$. Then \eqref{eq43} yields
$$
\frac{|J_{n,b} \cap I_{n,s}|}{n} \lesssim \frac{1}{n}\sum_{k=0}^{n-1}\log|O_k(z)|^{-2} \le \eta_n(z) + \etat_n(z). 
$$
We deduce that 
\begin{equation}\label{eq47}
\frac{|J_{n,b}|}{n} \lesssim \eta_n(z) + \etat_n(z) . 
\end{equation}
Taking into account \eqref{eq77}, this completes the proof. \qed

\medskip

Lemma \ref{lemSt-n} together with Lemma \ref{l31} and Lemma \ref{l32} will give us the asymptotic relation 
\begin{equation}\label{eq101}
\sup_{z \in \Gamma_{\zeta, n}}\frac{1}{n}\sum_{k=0}^{n-1}\Bigl||\phi_k^*(z) D_{\mu}(z)|^2 - 1\Bigr| \to 0, \qquad n\to \infty.
\end{equation}
In the remaining piece $\Gamma'_{\zeta, n} := \Gamma_{\zeta}\setminus \Gamma_{\zeta, n}$ of the Stolz angle $\Gamma_\zeta$ we will need a different argument. For convenience, let us reproduce the definition of $\Gamma'_{\zeta, n}$ in the following form:
$$
\Gamma'_{\zeta, n} = \Gamma_{\zeta} \cap \{z \in \C:\, |z - \zeta| < 1/n\}, \qquad n \ge 1.
$$
We also define
\begin{align}
R_1(n) &= \sup_{z_{1,2} \in \Gamma'_{\zeta,n}}\left|\frac{D_{\mu}(z_1)}{D_{\mu}(z_2)}\right|^2,\label{eq69}\\
R_2(n) &= \sup_{z \in \Gamma_{\zeta,n}}(\delta_n(\mu,z) + \eta_n(z) + \etat_n(z)).
\end{align}
It is clear that $R_1(n) \to 1$ and $R_2(n) \to 0$ for every $\zeta \in \T$ such that \eqref{eq21} holds and $|D_\mu|$ has a non-zero finite non-tangential limit at $\zeta$.
\begin{Lem}\label{l36}
Let $\zeta \in \T$. For every $\Lambda \subset\{0\le k \le n-1\}$ we have
\begin{equation}\label{eq73}
\sup_{z \in \Gamma'_{\zeta,n}}\frac{\sum_{k \in \Lambda}\Bigl||\phi_k^*(z) D_{\mu}(z)|^2 - 1\Bigr|}{n}  \lesssim\frac{R_1(n)|\Lambda|}{n} + R_1 (n) R_2(n),
\end{equation}
for all $n \ge 1$.
\end{Lem}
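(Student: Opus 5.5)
The estimate will come from comparing each point $z \in \Gamma'_{\zeta,n}$ with a single reference point on the outer boundary of $\Gamma'_{\zeta,n}$, where Lemma \ref{lemSt-n} already controls the Ces\`aro sum. Take $z_* \in \Gamma_{\zeta} \cap \{|z-\zeta| = 1/n\}$, say $z_* = (1-1/n)\zeta$; this point lies in $\Gamma_{\zeta,n}$. The key observation is that for $0 \le k \le n-1$ the reflected polynomial $\phi_k^*$ varies very little on the small disk $B(\zeta, 1/n)$, which contains $\Gamma'_{\zeta,n}$ and $z_*$. Indeed, $\phi_k^*$ has no zeroes in $\ov{\D}$, and a M\'at\'e--Nevai type bound (Lemma 2.3 in \cite{B21}, already used in the proof of Lemma \ref{lemNearCircAdd}) gives $|\phi_k^*(z)| \asymp |\phi_k^*(z_*)|$ for $z \in \D \cap B(\zeta,1/n)$, with absolute constants. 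The reason is that $|z|^k, |z_*|^k \ge (1-2/n)^{n} \gtrsim 1$ and the two points are at distance $\lesssim 1/n$.

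If this comparability with absolute constants is delicate, I would instead bound $|\phi_k^*(z)|^2 \lesssim |\phi_k^*(z_*)|^2$ in the direction needed, using subharmonicity together with the Christoffel-function upper bound \eqref{eq45}, applied at $z_*$. Multiplying by $|D_\mu(z)|^2 \le R_1(n) |D_\mu(z_*)|^2$ (after enlarging the set in \eqref{eq69} slightly so that it contains $z_*$, which is harmless) gives
$$
|\phi_k^*(z) D_\mu(z)|^2 \lesssim R_1(n)\, |\phi_k^*(z_*) D_\mu(z_*)|^2 .
$$

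Now write $\bigl||\phi_k^*(z) D_\mu(z)|^2 - 1\bigr| \le |\phi_k^*(z) D_\mu(z)|^2 + 1$ and sum over $k\in\Lambda$:
$$
\frac{1}{n}\sum_{k\in\Lambda}\Bigl||\phi_k^*(z) D_\mu(z)|^2 - 1\Bigr| \lesssim \frac{|\Lambda|}{n} + R_1(n)\,\frac{1}{n}\sum_{k\in\Lambda}|\phi_k^*(z_*)D_\mu(z_*)|^2 .
$$
For the last sum, I would use
$$
|\phi_k^*(z_*)D_\mu(z_*)|^2 \le 1 + \bigl||\phi_k^*(z_*)D_\mu(z_*)|^2-1\bigr|.
$$
Summing over $k\in\Lambda$ and using Lemma \ref{lemSt-n} at $z_* \in \Gamma_{\zeta,n}$, the sum is at most
$$
\frac{|\Lambda|}{n} + C\bigl(\delta_n(\mu,z_*)+\eta_n(z_*)+\etat_n(z_*)\bigr) \le \frac{|\Lambda|}{n} + C R_2(n).
$$
Since $R_1(n) \ge 1$, this yields \eqref{eq73}.

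The main obstacle is the pointwise comparison $|\phi_k^*(z)| \lesssim |\phi_k^*(z_*)|$ uniformly in $k \le n-1$ and $z \in \Gamma'_{\zeta,n}$. The lower bound of Lemma 2.3 in \cite{B21} goes in the direction $|p(z_1)/p(z_2)| \ge ((1+r)/2)^{\deg p}\cdots$. To bound the ratio in the direction we need, I would apply that estimate to each factor $z - z_j$, with $|z_j|\ge 1$, of $\phi_k^*$. Each factor satisfies $|z-z_j| \le |z-z_*| + |z_*-z_j| \le |z_*-z_j| + 2/n$ and $|z_*-z_j| \ge 1/n$, so the naive bound would be $3^k$ per polynomial. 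The ratio of factors is therefore not uniformly bounded, and the Blaschke/argument-principle structure is essential. The clean route is to use the representation $\phi_k^*(z) = \prod (1 - z/z_j)\cdot c$ together with $|z|\le 1$. I would rather use the quantity $|\phi_k^*(z)|^2(1-|z|^2)$ and harmonic-measure comparison on the hyperbolic disk where both $z$ and $z_*$ lie; for points very close to $\zeta$ this requires an extra argument via the maximum principle on $\Gamma'_{\zeta,n}$ with boundary values controlled on $\T\cap B(\zeta,1/n)$. I expect this step to need the most care.
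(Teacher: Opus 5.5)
Your proposal has a genuine gap at the step you flag yourself, and with a single fixed reference point that step cannot be closed. Lemma 2.3 in \cite{B21}, as it is used in the paper, is a \emph{radial} estimate. For a polynomial $p$ of degree $k$ without zeroes in $\D$, with $|z|\le 1$ and $0<r<1$, it gives $|p(rz)| \ge \bigl(\frac{1+r}{2}\bigr)^{k}|p(z)|$, so it compares $z$ only with points on the same ray. Being within distance $1/n$ of $z_*=(1-1/n)\zeta$ does not suffice, and the comparison $|\phi_k^*(z)| \lesssim |\phi_k^*(z_*)|$ is actually false. Here is a counterexample:
\begin{itemize}
\item take $\zeta=1$, $w = 1+i/n$ (so $|w|>1$), and $p(z)=(z-w)^k$;
\item take $z = 1-\frac{\rho}{n}e^{i\theta}$ with $\rho<1$ close to $1$ and $\theta<\pi/6$ close to $\pi/6$, so that $z\in\Gamma'_{1,n}$;
\item then $n^2|z_*-w|^2 = 2$, while $n^2|z-w|^2=\rho^2+2\rho\sin\theta+1\approx 3$;
\item hence $|p(z)/p(z_*)|\approx (3/2)^{k/2}$, which is exponentially large for $k\approx n$.
\end{itemize}
Orthogonality does not rescue this, because a multiple of $p$ is $\phi_k^*$ for the Bernstein--Szeg\H{o} measure $c|p|^{-2}\,dm\in\szc$. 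Your fallback via subharmonicity and \eqref{eq45} at $z_*$ fails for the same reason. The bound \eqref{eq45} controls only an average over $k$ at $z_*$, and the example shows that no pointwise bound of the required form holds.

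The missing idea is to let the reference point depend on $z$, as the paper does. Put $\lambda(z)=rz$, with $r$ chosen so that $|\lambda(z)|=1-1/n$.
\begin{itemize}
\item Then $r\ge 1-1/n$, so $\bigl(\frac{1+r}{2}\bigr)^{k}\ge\bigl(1-\frac{1}{2n}\bigr)^{n}\ge\frac12$ for $k\le n-1$. The radial estimate then gives $|\phi_k^*(z)|\le 2|\phi_k^*(\lambda(z))|$.
\item The point $\lambda(z)$ lies in $\Gamma_\zeta$, since the cone is convex and contains $0$.
\item Also $|\lambda(z)-\zeta|\ge 1-|\lambda(z)|=1/n$, so $\lambda(z)\in\Gamma_{\zeta,n}$.
\item Finally $|\lambda(z)-\zeta|<2/n$, so the harmless enlargement of the set in \eqref{eq69} that you mention also covers these points.
\end{itemize}
Because $\lambda(z)$ moves with $z$, $R_2(n)$ is defined as a supremum over the whole truncated cone $\Gamma_{\zeta,n}$ rather than as a value at one point. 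With $\lambda(z)$ in place of $z_*$, the rest of your argument goes through unchanged. Bounding $|\phi_k^*(\lambda)D_\mu(\lambda)|^2\le 1+\bigl||\phi_k^*(\lambda)D_\mu(\lambda)|^2-1\bigr|$ and then applying Lemma \ref{lemSt-n} at $\lambda=\lambda(z)$ is a slightly more direct substitute for the paper's splitting into $J_{n,s}$, $J_{n,b}$ via \eqref{eq46}, \eqref{eq47}.
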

\beginpf Take $z \in \Gamma'_{\zeta,n}$ and consider $\lambda(z) = rz$ where $r \in (1-1/n, 1)$ is chosen so that $|\lambda(z)| = 1-1/n$. Note that $\lambda(z) \in \Gamma_{\zeta,n}$. Since $\phi_k^*$ has no zeroes in $\D$, we have
$$
|\phi_k^*(\lambda(z))|^2 \ge \left(\frac{1+r}{2}\right)^k|\phi_k^*(z)|^2  \gtrsim |\phi_k^*(z)|^2, \qquad 0 \le k \le n-1,
$$
see Lemma 2.3 in \cite{B21}. Next, consider the sets $J_{n,s}$, $J_{n,b}$ defined in \eqref{eq71}, \eqref{eq72} with $\lambda(z)$ in place of $z$.
We have
\begin{align*}
\sum_{k \in \Lambda}|\phi_k^*(z) D_{\mu}(z)|^2
&\lesssim R_1(n)\sum_{k \in \Lambda}|\phi_k^*(\lambda(z)) D_{\mu}(\lambda(z))|^2\\
&\lesssim R_1(n)\Bigl(\sum_{k \in \Lambda\cap J_{n,s}} 1 + \sum_{k \in \Lambda\cap J_{n,b}}|\phi_k^*(\lambda(z)) D_{\mu}(\lambda(z))|^2\Bigr)\\
&\lesssim R_1(n)|\Lambda| + R_1(n)\sum_{k \in J_{n,b}}|\phi_k^*(\lambda(z)) D_{\mu}(\lambda(z))|^2.
\end{align*}
Now the claim follows from \eqref{eq46}, \eqref{eq47} applied to $\lambda(z)$ in place of $z$. \qed

\medskip

In the next proof we switch from the polynomials $\phi^{*}_{n}$ to polynomials $\tilde\phi^{*}_{\lambda, n}$ defined in \eqref{eq44star}. The fact that  polynomials  $\tilde\phi^{*}_{\lambda, n}$ have a small distortion near $\zeta$ (see \eqref{eq68}) will make it possible to get the asymptotics of $\phi^{*}_{n}(z)$ for 
$z \in \Gamma'_{\zeta, n}$ by using the asymptotics of $\phi^{*}_{n}(\lambda_n)$ at the point $\lambda_n = (1 - 1/n)\zeta$ in the region $\Gamma_{\zeta, n}$. The latter asymptotics was already found in \eqref{eq101}.

\medskip

Given $\lambda \in \D$ and $\eps \in (0, 1)$, let us consider the sets $I_{n, s}(\eps)$, $I_{n, b}(\eps)$ defined by
\begin{align}
I_{n, s}(\eps) 
&= \left\{0 \le k \le n-1:\;\;  |f_{k-1}(\lambda)| \le \eps\right\}, \label{eq64}\\ 
I_{n, b}(\eps) 
&= \left\{0 \le k \le n-1:\;\;  |f_{k-1}(\lambda)| > \eps\right\}, \label{eq65}
\end{align}
where, as before $f_{-1} = 0$. By construction, see \eqref{eq74}, for every $k \in I_{n, s}(\eps)$ we have
\begin{equation}\label{eq67}
\Biggl|\left|\frac{\tilde\phi^{*}_{\lambda, k}(z)}{\phi^{*}_{k}(z)}\right|^2 - 1\Biggr| \lesssim \eps, \qquad z \in \D.
\end{equation}

\medskip

\begin{Lem}\label{l39}
Let $\mu\in\szc$, $\zeta \in \T$, be such that \eqref{eq21} holds and $|D_\mu|$ has a non-zero finite non-tangential limit at $\zeta$, let $n_0$ be defined by \eqref{nova}, $\lambda_n = (1-1/n)\zeta$. For $\varepsilon >0$, consider the set $I_{n,s}(\eps)$ from \eqref{eq64} generated by $\lambda = \lambda_n$. We have
$$
\limsup_{n\to \infty}\sup_{z \in \Gamma'_{\zeta,n}}\left(\frac{1}{n}\sum_{k \in I_{n,s}(\eps)}\Bigl||\phi_k^*(z) D_{\mu}(z)|^2 - 1\Bigr|\right)  \lesssim \eps.
$$
\end{Lem}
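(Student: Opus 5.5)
Fix $\eps\in(0,1)$, $n\ge n_0$, $z\in\Gamma'_{\zeta,n}$ and $k\in I_{n,s}(\eps)$, with $\lambda=\lambda_n=(1-1/n)\zeta$. The plan is to compare $|\phi_k^*(z)D_\mu(z)|^2$ with $|\phi_k^*(\lambda_n)D_\mu(\lambda_n)|^2$ through the modified polynomials $\tilde\phi^*_{\lambda_n,k}$. We write
$$
\frac{|\phi_k^*(z)|^2}{|\phi_k^*(\lambda_n)|^2}=\frac{|\phi_k^*(z)|^2}{|\tilde\phi^*_{\lambda_n,k}(z)|^2}\cdot\frac{|\tilde\phi^*_{\lambda_n,k}(z)|^2}{|\tilde\phi^*_{\lambda_n,k}(\lambda_n)|^2}\cdot\frac{|\tilde\phi^*_{\lambda_n,k}(\lambda_n)|^2}{|\phi_k^*(\lambda_n)|^2}.
$$
The first and third factors are $1+O(\eps)$ by \eqref{eq67}, because $k\in I_{n,s}(\eps)$. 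Since $\Gamma'_{\zeta,n}\subset B(\zeta,1/n)$, the middle factor is $1+O(\sqrt{\K(\mu,\lambda_n)})$ by \eqref{eq68} of Lemma \ref{domain-bound}. Finally, $|D_\mu(z)/D_\mu(\lambda_n)|^2$ is controlled by $R_1(n)$, with $\lambda_n$ in the closure of $\Gamma'_{\zeta,n}$, or else by using the non-tangential limit directly. Altogether,
$$
|\phi_k^*(z)D_\mu(z)|^2=|\phi_k^*(\lambda_n)D_\mu(\lambda_n)|^2\,(1+\theta_{n,k}(z)),\qquad |\theta_{n,k}(z)|\lesssim \eps+\sqrt{\K(\mu,\lambda_n)}+|R_1(n)-1|,
$$
uniformly in $z\in\Gamma'_{\zeta,n}$ and $k\in I_{n,s}(\eps)$.

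Setting $x_k=|\phi_k^*(\lambda_n)D_\mu(\lambda_n)|^2$, this gives
$$
\bigl||\phi_k^*(z)D_\mu(z)|^2-1\bigr|\le |x_k-1|+|\theta_{n,k}(z)|\,x_k.
$$
Summing over $k\in I_{n,s}(\eps)$ and dividing by $n$ bounds the left-hand side of the lemma by
$$
\frac1n\sum_{k=0}^{n-1}|x_k-1|+\sup_k|\theta_{n,k}|\cdot\frac1n\sum_{k=0}^{n-1}x_k .
$$
The point $\lambda_n$ lies in $\Gamma_{\zeta,n}$, since $|\lambda_n-\zeta|=1/n$. So by \eqref{eq101}, equivalently Lemma \ref{lemSt-n} together with $R_2(n)\to0$, the first term tends to $0$. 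For the same reason $\frac1n\sum x_k\le 1+o(1)$ by \eqref{eq45}, because $n(1-|\lambda_n|^2)\asymp1$ and $\K(\mu,\lambda_n)\to0$ make $\delta_n(\mu,\lambda_n)\to0$. Using $\K(\mu,\lambda_n)\to0$ from \eqref{eq21} and $R_1(n)\to1$, the $\limsup$ is $\lesssim\eps$.

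The step that needs care is the application of \eqref{eq67} and \eqref{eq68}. Here $\tilde\phi^*_{\lambda_n,k}$ involves $f_{k-1}(\lambda_n)$, and the index $k$ ranges up to $n-1\le n$, which matches the range $0\le k\le n$ in Lemma \ref{domain-bound}. We must also check that the implicit constants in \eqref{eq74} are uniform, which holds as long as $|f_{k-1}(0)|$ is small. Since $f_{k-1}(0)=a_{k-1}\to0$, the finitely many $k$ with $|a_{k-1}|>\eps$ contribute $O(1/n)$ after a crude bound. For that crude bound we use Lemma \ref{l36} with $\Lambda$ equal to this finite set, which gives $O(R_1(n)|\Lambda|/n+R_1(n)R_2(n))\to0$. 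Hence \eqref{eq67} may indeed be used with an absolute constant times $\eps$ for the remaining indices.
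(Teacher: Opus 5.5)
Your proof is correct. It uses the same ingredients as the paper: the switch to $\tilde\phi^*_{\lambda_n,k}$ via \eqref{eq67}, the distortion bound \eqref{eq68}, the ratio $R_1(n)$, and Lemma \ref{lemSt-n} at $\lambda_n$. The bookkeeping differs, though, and the difference matters.

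The paper replaces $\phi_k^*(z)$ by $\tilde\phi^*_{\lambda_n,k}(z)$ additively at the point $z$, which costs $\eps\cdot\frac1n\sum_k|\phi_k^*(z)D_\mu(z)|^2$. It bounds this average by \eqref{eq45} at $z$, asserting that $\sup_{z\in\Gamma'_{\zeta,n}}\delta_n(\mu,z)\to0$. It then moves $\tilde\phi^*_{\lambda_n,k}$ from $z$ to $\lambda_n$ and quotes Lemma \ref{lemSt-n} there, leaving implicit the switch back from $\tilde\phi^*_{\lambda_n,k}(\lambda_n)$ to $\phi_k^*(\lambda_n)$.

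Your multiplicative three-factor comparison transfers $|\phi_k^*(z)D_\mu(z)|^2$ to $|\phi_k^*(\lambda_n)D_\mu(\lambda_n)|^2$ in one step. Consequently every averaged estimate you need is taken at $\lambda_n$, where $n(1-|\lambda_n|^2)\asymp1$ makes $\delta_n(\mu,\lambda_n)\to0$ automatic. This is a real advantage. On $\Gamma'_{\zeta,n}$ the quantity $n(1-|z|^2)$ can be arbitrarily small, while \eqref{eq21} gives no rate for $\K(\mu,z)\to0$. So $\delta_n(\mu,z)$ need not be small, or even bounded, there. The paper's step therefore needs a repair, for instance Lemma \ref{l36} with $\Lambda=\{0,\dots,n-1\}$, which bounds the average by $O(R_1(n))$.

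You also correctly observe that \eqref{eq74} yields \eqref{eq67} only when $|f_{k-1}(0)|=|a_{k-1}|$ is small as well, a point the paper passes over. Discarding the finitely many indices with $|a_{k-1}|>\eps$ through Lemma \ref{l36} handles this cleanly. Their contribution is $O(R_1(n)|\Lambda|/n+R_1(n)R_2(n))=o(1)$ rather than $O(1/n)$ as you state, but $o(1)$ is all you need.
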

\beginpf  At first, observe that for every $k \in I_{n,s}(\eps)$ and $z \in \D$ relation \eqref{eq67} implies 
$$
\Bigl||\phi_k^*(z) D_{\mu}(z)|^2 - |\tilde\phi_{\lambda_n, k}^*(z) D_{\mu}(z)|^2 \Bigr| \lesssim \eps  |\phi_k^*(z) D_{\mu}(z)|^2.
$$
Therefore, we have
$$
\sum\limits_{k \in I_{n,s}(\eps)}\Bigl||\phi_k^*(z) D_{\mu}(z)|^2 - 1\Bigr| \lesssim 
\sum\limits_{k \in I_{n,s}(\eps)}\Bigl||\tilde\phi_{\lambda_n, k}^*(z) D_{\mu}(z)|^2 - 1\Bigr| + \eps\sum\limits_{k = 0}^{n-1}|\phi_k^*(z) D_{\mu}(z)|^2.
$$
Recall that 
$$
\frac{1}{n}\sum\limits_{k = 0}^{n-1}|\phi_k^*(z) D_{\mu}(z)|^2 \le 1 + \delta_n(\mu,z)
$$
by \eqref{eq45}, and $\lim_{n \to \infty}\sup_{z \in \Gamma'_{\zeta,n}}\delta_n(\mu,z) = 0$. Thus, we only need to check that 
$$
\limsup_{n\to \infty}\sup_{z \in \Gamma'_{\zeta,n}}\left(\frac{1}{n}\sum\limits_{k \in I_{n,s}(\eps)}\Bigl||\tilde\phi_{\lambda_n, k}^*(z) D_{\mu}(z)|^2 - 1\Bigr|\right) = 0.
$$
For $z \in \Gamma'_{\zeta,n}$ and $0 \le k \le n$, $n \ge n_0$, with $n_0$ from \eqref{nova}, relation \eqref{eq68} gives
$$
\frac{1+O(\sqrt{\K(\mu, \lambda_n)})}{R_1(n)} \le \left|\frac{\tilde\phi_{\lambda_n, k}^*(z) D_{\mu}(z)}{\tilde\phi_{\lambda_n, k}^*(\lambda_n) D_{\mu}(\lambda_n)}\right|^2 
\le R_1(n) \cdot (1+O(\sqrt{\K(\mu, \lambda_n)})),
$$
where $R_1(n)$ is defined in \eqref{eq69}, $\lim_{n \to \infty}R_1(n) = 1$. It follows that
$$
\sup_{z \in \Gamma'_{\zeta,n}}\Bigl||\tilde\phi_{\lambda_n, k}^*(z) D_{\mu}(z)|^2 - |\tilde\phi_{\lambda_n, k}^*(\lambda_n) D_{\mu}(\lambda_n)|^2\Bigr| \le \eps_{n}|\tilde\phi_{\lambda_n, k}^*(\lambda_n) D_{\mu}(\lambda_n)|^2,
$$
for some sequence $\eps_{n}$ such that $\lim_{n \to \infty}\eps_{n} = 0$.
By triangle inequality, we have
$$
\sup_{z \in \Gamma'_{\zeta,n}}\bigl||\tilde\phi_{\lambda_n, k}^*(z) D_{\mu}(z)|^2 - 1\bigr|
\lesssim \bigl||\tilde\phi_{\lambda_n, k}^*(\lambda_n) D_{\mu}(\lambda_n)|^2 - 1\bigr| + \eps_{n}|\tilde\phi_{\lambda_n, k}^*(\lambda_n) D_{\mu}(\lambda_n)|^2.
$$
Finally, Lemma \ref{lemSt-n} for the point $z = \lambda_n$ gives
$$
\frac{1}{n}\sum\limits_{k \in I_{n,s} (\varepsilon)}\bigl||\tilde\phi_{\lambda_n, k}^*(\lambda_n) D_{\mu}(\lambda_n)|^2 - 1\bigr|
\lesssim \delta_n(\mu,\lambda_n) + \eta_n(\lambda_n) + \etat_n(\lambda_n), 
$$ 
and the claim follows from Lemma \ref{l31}, Lemma \ref{l32}, and the fact that $\lambda_n \in \Gamma_{\zeta, n}$. \qed

\bigskip

\noindent{\bf Proof of Theorem \ref{thm1}.} The proof is a combination of previous results. Lemma~\ref{lemSt-n} gives
$$
\frac{1}{n}\sum_{k=0}^{n-1}\Bigl||\phi_k^*(z) D_{\mu}(z)|^2 - 1\Bigr| \lesssim \delta_n(\mu,z) + \eta_n(z) + \etat_n(z) , \quad z \in \D. 
$$
Therefore, we have
$$
\sup_{z \in \Gamma_{\zeta, n}}\frac{1}{n}\sum_{k=0}^{n-1}\Bigl||\phi_k^*(z) D_{\mu}(z)|^2 - 1\Bigr| \to 0, \qquad n\to \infty,
$$
by Lemma \ref{l31} and Lemma \ref{l32}. To deal with the region $\Gamma'_{\zeta, n} = \Gamma_{\zeta}\setminus\Gamma_{\zeta, n}$, we take $\eps >0$, the point $\lambda_n = (1- 1/n) \zeta$  and consider the sets $I_{n,s}(\eps)$, $I_{n,b}(\eps)$ from \eqref{eq64}, \eqref{eq65} corresponding to the points $\lambda_n$, that is, $I_{n,s} (\varepsilon) = \{0 \le k \le n-1 \colon |f_{k-1} (\lambda_n)| < \varepsilon \}$ and $I_{n,b} (\varepsilon) = \{0 \le k \le n-1 \colon |f_{k-1} (\lambda_n)| \geq \varepsilon \}$. Lemma \ref{l39} gives 
$$
\limsup_{n\to \infty}\sup_{z \in \Gamma'_{\zeta,n}}\left(\frac{1}{n}\sum_{k \in I_{n,s}(\eps)}\Bigl||\phi_k^*(z) D_{\mu}(z)|^2 - 1\Bigr|\right)  \lesssim \eps.
$$
Choosing $\Lambda = I_{n,b}(\eps)$ in Lemma \ref{l36}, we obtain 
$$
\sup_{z \in \Gamma'_{\zeta,n}}\left(\frac{1}{n}\sum_{k \in I_{n,b}(\eps)}\Bigl||\phi_k^*(z) D_{\mu}(z)|^2 - 1\Bigr|\right)  \lesssim \frac{R_1(n)|I_{n,b}(\eps)|}{n} + R_1(n) R_2(n).
$$
Recall that $R_1(n) \to 1$, $R_2(n) \to 0$ for every $\zeta\in \T$ such that \eqref{eq21} holds and $|D_\mu|$ has a non-zero finite non-tangential limit at $\zeta$. The definition \eqref{etatn} of $\etat_n$ implies
$$
\frac{|I_{n,b}(\eps)|}{n}\log\frac{1}{1-\eps} \le \etat_n(\lambda_n).
$$
Since $\lambda_n \in \Gamma_{\zeta,n}$, we have $\limsup_{n\to \infty} \frac{|I_{n,b}(\eps)|}{n} = 0$ by Lemma \ref{l32}. This gives us 
$$
\limsup_{n\to \infty}\sup_{z \in \Gamma'_{\zeta,n}}\Biggl(\frac{1}{n}\sum_{k \in I_{n,b}(\eps)}\Bigl||\phi_k^*(z) D_{\mu}(z)|^2 - 1\Bigr|\Biggr) = 0.
$$
Collecting the estimates, we obtain
$$
\limsup_{n\to \infty}\sup_{z \in \Gamma'_{\zeta,n}}\left(\frac{1}{n}\sum_{k=0}^{n-1}\Bigl||\phi_k^*(z) D_{\mu}(z)|^2 - 1\Bigr|\right) \lesssim \eps.
$$
The theorem follows by letting $\eps \to 0$. \qed

\medskip

\begin{figure}
\centering
\includegraphics[width=\linewidth]{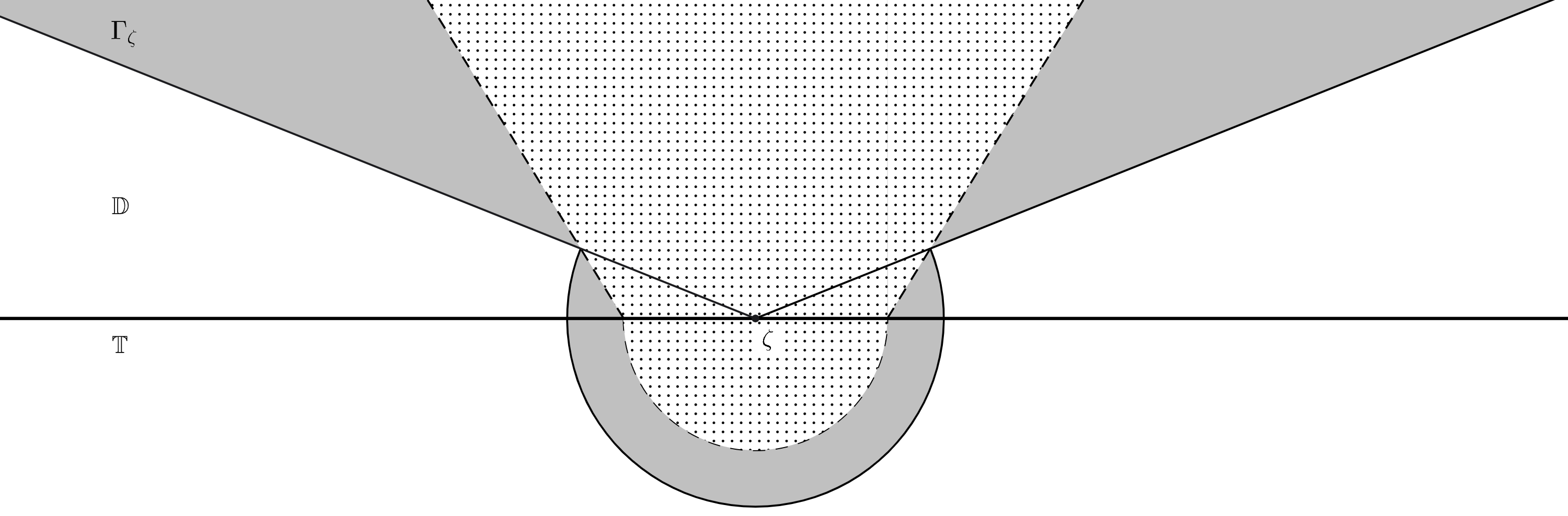}
\caption{The region appearing in Theorem \ref{thm1-bis} (filled with gray) contain the region from Figure \ref{fig:sub2} (filled here with dots).}
\label{fig:img4}
\end{figure}

We have proved Theorem \ref{thm1} for Stolz angles. The same method gives a bit more general result that we formulate below.

\begin{Thm}\label{thm1-bis}
Let $\mu \in \szc$ and $\zeta \in \T$ be such that \eqref{eq21} holds and $|D_\mu|$ has a non-zero finite non-tangential limit at $\zeta$. Fix $A > 0$ and let $B(\zeta, A/n) = \{z \in \C: \; |z-\zeta| \le A/n\}$. Then
\begin{equation}\label{thm-eq1bis}
\sup_{z \in \Gamma_\zeta \cup B(\zeta, A/n)}\Biggl(\frac{1}{n}\sum_{k = 0}^{n-1}\Bigl||\phi_k^*(z) D_{\mu}(\tilde z)|^2 - 1\Bigr|\Biggr) \to 0, \qquad n \to \infty,
\end{equation}
where we set $\tilde z = \zeta$ for $z \in B(\zeta, A/n)$ and $\tilde z = z$ for $z \in \Gamma_\zeta \setminus B(\zeta, A/n)$. 
\end{Thm}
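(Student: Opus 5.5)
The proof repeats the argument for Theorem \ref{thm1}, with the region near $\zeta$ enlarged from $\Gamma'_{\zeta,n}$ to $\Gamma'_{\zeta,n}\cup B(\zeta,A/n)$. Since $|D_\mu|$ has a non-tangential limit at $\zeta$, we have $|D_\mu(z)|^2/|D_\mu(\zeta)|^2\to1$ uniformly for $z\in\Gamma_\zeta$ with $|z-\zeta|\le A'/n$, for every fixed $A'$. Therefore, on $\Gamma_\zeta\setminus B(\zeta,A/n)$, replacing $D_\mu(z)$ by $D_\mu(\zeta)$ changes each summand only by a factor $1+o(1)$ once $|z-\zeta|\lesssim 1/n$. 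Farther from $\zeta$, the original statement \eqref{thm-eq1} applies directly. So it suffices to prove
\[
\sup_{z \in B(\zeta, A/n)}\frac{1}{n}\sum_{k=0}^{n-1}\Bigl||\phi_k^*(z)D_\mu(\zeta)|^2-1\Bigr|\to 0 .
\]

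We replace $\lambda_n=(1-1/n)\zeta$ by $\lambda_n^A=(1-1/n)\zeta$, kept at hyperbolic scale $1/n$, and apply Lemmas \ref{lemNearCircAdd} and \ref{lemNearCirc} with parameters scaled by $A$. Take $A_1=k/n$, $A_2=8Ak/n$, and $\tilde A=2Ak/n$, possibly with $\lambda=(1-A/n)\zeta$ and the constants adjusted. This gives the analogue of Lemma \ref{domain-bound}: for $n\ge n_0(A)$, where $n_0(A)$ is defined as in \eqref{nova} with $\eta_0$ depending on $A$,
\[
\left|\frac{\tilde\phi^*_{\lambda_n,k}(z)}{\tilde\phi^*_{\lambda_n,k}(\lambda_n)}\right|^2=1+O_A\bigl(\sqrt{\K(\mu,\lambda_n)}\bigr),\qquad z\in B(\zeta,A/n).
\]
This is the main point where care is needed. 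The maximum principle for $1/\tilde\phi^*$ on a domain $\Omega\supset B(\zeta,A/n)$ requires the absence of zeroes in $B(\zeta,8A/n)$, which is exactly what Lemma \ref{lemNearCircAdd} provides, and the constants now depend on $A$. The hypothesis $e^{\K(\mu,\lambda_n)}-1\le\eta_0(A)$ holds for large $n$ by \eqref{eq21}.

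Next we redo Lemma \ref{l36} and Lemma \ref{l39} on $B(\zeta,A/n)$, with $D_\mu(\zeta)$ in place of $D_\mu(z)$ and $R_1(n)$ replaced by $\sup_{z\in\Gamma'_{\zeta,n}}|D_\mu(\zeta)/D_\mu(z)|^2\to1$.

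For Lemma \ref{l36} we need $|\phi_k^*(z)|\lesssim_A|\phi_k^*(\lambda(z))|$ for $z\in B(\zeta,A/n)$, including points outside $\D$, where $\lambda(z)$ is a point of modulus $1-1/n$ in the direction of $z$. Since $\phi_k^*$ has no zeroes in $\D$, $\phi_k^*/\phi_k$ is the reciprocal of a Blaschke product, so $|\phi_k^*(z)|$ is controlled on $|z|\le1+A/n$ by the M\'at\'e--Nevai type bound of Lemma 2.3 in \cite{B21} on circles. This costs at most a factor $(1+A/n)^{k}\le e^{A}$, so the bound holds with a constant depending only on $A$. The proof then goes through with $|\Lambda|=|I_{n,b}(\eps)|$ and $|I_{n,b}(\eps)|/n\to0$ by Lemma \ref{l32}.

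For Lemma \ref{l39}, \eqref{eq67} holds for all $z$, not only $z\in\D$: it follows from \eqref{varphi}, since $|b_{k-1}(z)|\le|z|^{k}$ outside $\D$. This estimate may need a factor $e^{A}$, but that factor is multiplied by $\eps$ and remains harmless. Combining this with the distortion bound above, the analogues of Lemma \ref{l39} and Lemma \ref{lemSt-n} at $\lambda_n$ give the Ces\`aro sum over $I_{n,s}(\eps)$ bounded by $O(\eps)+o(1)$. Letting $\eps\to0$ finishes the proof.
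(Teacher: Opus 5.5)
Your plan of rescaling Lemmas \ref{lemNearCircAdd} and \ref{lemNearCirc} to the point $(1-A/n)\zeta$ and rerunning Lemmas \ref{l36}, \ref{l39} on $B(\zeta,A/n)$ is the ``same method'' the paper points to; the paper gives no further details. However, the step that handles the genuinely new points, those of $B(\zeta,A/n)$ with $|z|>1$, rests on a false inequality. You extend \eqref{eq67} to $|z|>1$ through \eqref{varphi} by asserting $|b_{k-1}(z)|\le|z|^{k}$ there.

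For a finite Blaschke product one only has $|b(z)|=1/|b(1/\bar z)|\ge 1$ outside $\overline{\D}$, and this is unbounded near reflections of zeros. For example, $b(z)=(z-a)/(1-\bar a z)$ with $a=1-\delta$ and $z=1+\delta$ gives $|b(z)|=2/\delta$. Nothing in your argument prevents $\phi_{k-1}$ from vanishing at or near $1/\bar z$. In that case $|z b_{k-1}(z)|$ is huge, and \eqref{varphi} gives no control of $\tilde\phi^*_{\lambda,k}(z)/\phi^*_k(z)$, even when $f_{k-1}(\lambda)$ and $f_{k-1}(0)$ are small.

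The missing idea is to control the Blaschke product of $\tilde\phi^*_{\lambda,k}$ instead, via the distortion bound, which holds on a full ball around $\zeta$, both inside and outside $\D$. Take $\lambda=(1-A/n)\zeta$. Your rescaled Lemma \ref{domain-bound} then gives $|\tilde\phi^*_{\lambda,k}(w)/\tilde\phi^*_{\lambda,k}(\lambda)|=1+O_A(\sqrt{\K(\mu,\lambda)})$ on $B(\zeta,2A/n)$. For $z\in B(\zeta,A/n)$, both $z$ and $1/\bar z$ lie in this ball when $n$ is large. Hence $\tilde b_k=\tilde\phi_{\lambda,k}/\tilde\phi^*_{\lambda,k}$ satisfies
\[
|\tilde b_k(z)|=|z|^k\frac{|\tilde\phi^*_{\lambda,k}(1/\bar z)|}{|\tilde\phi^*_{\lambda,k}(z)|}\le 2e^{A}.
\]
Next, solve \eqref{eq44}, \eqref{eq44star} for $z\phi_{k-1}$ and $\phi^*_{k-1}$, and insert the result into the recurrence for $\phi_k^*$. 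This gives $\phi^*_k=\alpha_k\tilde\phi^*_{\lambda,k}+\beta_k\tilde\phi_{\lambda,k}$ with $|\alpha_k-1|+|\beta_k|\lesssim|f_{k-1}(0)|+|f_{k-1}(\lambda)|$. Consequently $|\phi^*_k(z)/\tilde\phi^*_{\lambda,k}(z)-1|\lesssim e^{A}\varepsilon$ on $B(\zeta,A/n)$ for $k\in I_{n,s}(\varepsilon)$ and $k$ large. This is the correct substitute for \eqref{eq67} outside $\D$, and with it your analogue of Lemma \ref{l39} goes through.

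There are two smaller slips. First, in your analogue of Lemma \ref{l36}, a point $\lambda(z)$ of modulus $1-1/n$ is too close to $\T$ when $A>1$. For $|z|<1-1/n$, or for $|z|>1$ with $1/|z|<1-1/n$, the M\'at\'e--Nevai inequality bounds the inner value from below rather than from above, so it runs the wrong way. Choosing modulus $1-2A/n$ fixes this: $\lambda(z)$ still lies in $\Gamma_{\zeta,n}$ for large $n$ (assume $A\ge1$), and the reflection bound $|\phi^*_k(z)|\le|z|^k|\phi^*_k(1/\bar z)|$ costs only $e^{A}$. Second, Lemma \ref{lemNearCircAdd} must be applied at the same point $(1-A/n)\zeta$ as Lemma \ref{lemNearCirc}, that is, with $A_1=Ak/n$ rather than $k/n$. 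The entropy bound \eqref{eq66} for $\tilde\phi^*_{\lambda,k}$ is available only at $\lambda$ itself.
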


\medskip

The regions $\Gamma_\zeta \cup B(\zeta, A/n)$ in Theorem \ref{thm1-bis} are shown on Figure \ref{fig:img4}. Note that the width of the Stolz angle $\Gamma_\zeta$ and the parameter $A>0$ are arbitrary, hence these regions contain the area shown on Figure \ref{fig:sub2}. This area is filled with dots on Figure~\ref{fig:img4}.

\medskip

\section{Proof of Theorem \ref{thm2}}
The proof of Theorem \ref{thm2} uses the same tools as the proof of Theorem \ref{thm1}: Khrushchev-type formula \eqref{eqKH} and the entropy function $\K(\mu, z)$. The latter controls Schur functions of $\mu$ that appear in \eqref{eqKH}, see Lemma \ref{l31} and Lemma \ref{l32}. Recall that we already know the averaged asymptotic behaviour of $|\phi_n^*D_{\mu}|$ under assumption \eqref{eq21}, that is, when $\lim_{r \to 1}\K(\mu, r\zeta) = 0$, see Theorem \ref{thm1}. The stronger assumption \eqref{eq22}, that we reproduce here,
\begin{equation}\tag{2.2}\label{eq22bis}
\sum_{n \ge 0} \K(\mu, z_n(\zeta)) < \infty, \qquad z_n(\zeta) = (1- 2^{-n}) \zeta,
\end{equation}
will allow us to pass from $|\phi_n^*D_{\mu}|$ to $\phi_n^*D_{\mu}$. For this we utilize the fact that $\phi_n^*D_{\mu}$ is an outer function in $\D$ with a positive value at $0$, hence $\log \phi_n^*D_{\mu}$ is a properly defined analytic function in $\D$ which is completely determined by its real part $\log |\phi_n^*D_{\mu}|$ via the operator of harmonic conjugation,
$$
\mathcal{Q}: u \mapsto \int_{\T}u(\xi) \Im\left(\frac{1 + \bar \xi z}{1 - \bar \xi z}\right) dm(\xi), \quad z \in \D . 
$$
Assumption \eqref{eq22} will allow us to prove that $\Im \log \phi_n^*D_{\mu} = \mathcal{Q}(\log|\phi_n^*D_{\mu}|)$ is small in $\Gamma_\zeta$, hence $\log \phi_n^*D_{\mu}$ is close to $\log |\phi_n^*D_{\mu}|$, which, in turn, is close to $0$. This will lead to the fact that $\phi_n^*D_{\mu}$ converges to $1$ in the strong Ces\`aro sense uniformly in $\Gamma_\zeta$, i.e.,  to the conclusion of Theorem \ref{thm2}. 

\medskip

\noindent Let us recall the definition of truncated cones 
$
\Gamma_{\zeta, n} = \{ z \in \Gamma_\zeta : |z - \zeta| \ge 1/n\}.
$
We start with a simple lemma.
\begin{Lem}\label{l00}
Let $u \in L^1(\T)$. Then 
\begin{equation}\notag
\lim_{n \to \infty} \frac{\sup_{z \in \Gamma_{\zeta, n}} |(\mathcal{Q}u)(z)|}{n}  = 0, \quad \zeta \in \T.
\end{equation}
\end{Lem}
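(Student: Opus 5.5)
The plan is to use the standard density argument: an exact bound for trigonometric polynomials, plus a crude uniform bound for the conjugate Poisson integral that grows like $1/\dist(z,\T)$ with constant $\|u\|_{L^1}$. The key observation is that on $\Gamma_{\zeta,n}$ we have $1-|z| \gtrsim |z-\zeta| \ge 1/n$, since in a Stolz angle $1-|z|\asymp|z-\zeta|$. Hence $1-|z|\ge c/n$ for all $z\in\Gamma_{\zeta,n}$, with $c$ depending only on the aperture of $\Gamma_\zeta$.

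First, I will establish the crude bound. The conjugate Poisson kernel satisfies
\[
\left|\Im\frac{1+\bar\xi z}{1-\bar\xi z}\right| = \frac{2|\Im(\bar\xi z)|}{|1-\bar\xi z|^2}\le \frac{2}{1-|z|},
\]
because $|1-\bar\xi z|\ge 1-|z|$ and $|\Im(\bar\xi z)|\le|\bar\xi z - 1|$ up to the trivial bound. It follows that $|(\mathcal{Q}u)(z)|\le 2\|u\|_{L^1(\T)}/(1-|z|)$. On $\Gamma_{\zeta,n}$ this yields $\sup_{z\in\Gamma_{\zeta,n}}|(\mathcal{Q}u)(z)|\le (2/c)\, n\,\|u\|_{L^1}$. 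So the ratio in the statement is bounded by $C\|u\|_{L^1}$, uniformly in $n$, and the map $u\mapsto \sup_{\Gamma_{\zeta,n}}|\mathcal{Q}u|/n$ is a sublinear functional with norm at most $C$.

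Second, I will treat trigonometric polynomials $p$. For these, $\mathcal{Q}p$ is the harmonic conjugate, which is a polynomial in $z,\bar z$ and hence bounded on $\overline{\D}$. The ratio $\sup|\mathcal{Q}p|/n$ therefore tends to $0$ trivially.

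Third, given $u\in L^1$ and $\eps>0$, I will pick a trigonometric polynomial $p$ with $\|u-p\|_{L^1}<\eps$. By linearity, $\limsup_n \sup_{\Gamma_{\zeta,n}}|\mathcal{Q}u|/n\le C\eps$, and letting $\eps\to0$ gives the claim. No real obstacle is expected; the only point needing care is the geometric comparison $1-|z|\gtrsim 1/n$ on $\Gamma_{\zeta,n}$. This holds because $\Gamma_\zeta$ is the convex hull of the disk of radius $1/2$ and $\zeta$, so for its points $|z-\zeta|\le C(1-|z|)$.
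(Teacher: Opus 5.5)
Your proposal is correct and follows essentially the same route as the paper: a uniform bound $\sup_{z\in\Gamma_{\zeta,n}}|(\mathcal{Q}u)(z)|\lesssim n\|u\|_{L^1(\T)}$, combined with approximating $u$ in $L^1(\T)$ by functions whose conjugate function is bounded (you use trigonometric polynomials, whereas the paper leaves $u_\eps$ unspecified). Your explicit use of $1-|z|\gtrsim|z-\zeta|\ge 1/n$ on $\Gamma_{\zeta,n}$ is slightly more careful than the paper. The paper's bound with constant $2$ tacitly assumes $|1-\bar\xi z|\ge 1/n$ there, which in fact holds only up to a constant depending on the aperture of $\Gamma_\zeta$.
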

\beginpf For every $u \in L^1(\T)$ and $n \ge 1$, we have 
\begin{equation}\label{eq121-2}
\frac{\sup_{z \in \Gamma_{\zeta, n}}|(\mathcal{Q}u)(z)|}{n} \le \sup_{z \in \Gamma_{\zeta, n}}\int_{\T} \frac{2|u(\xi)|}{n|1 - \bar\xi z|}\, dm(\xi) \le 2\|u\|_{L^1(\T)}.
\end{equation}
Let $u_\eps \in L^1(\T)$ be a function with $\mathcal{Q}u_\eps \in L^\infty(\D)$ and such that $\|u - u_\eps\|_{L^1(\T)} \le \eps$. Applying \eqref{eq121-2} to $u-u_\eps$, we obtain
\begin{align*}
\limsup_{n \to \infty} \frac{\sup_{z \in \Gamma_{\zeta, n}} |(\mathcal{Q}u)(z)|}{n} 
\le &\limsup_{n \to \infty} \frac{\sup_{z \in \Gamma_{\zeta, n}} |(\mathcal{Q}u- \mathcal{Q}u_\eps)(z)|}{n}+ \\
&+ \limsup_{n \to \infty} \frac{\sup_{z \in \Gamma_{\zeta, n}} |(\mathcal{Q}u_\eps)(z)|}{n} \le 2\eps .
\end{align*}
Since $\eps> 0$ is arbitrary, the lemma follows. \qed

\medskip

\begin{Lem}\label{lemImBis}
Suppose that $\mu \in \szc$ and $\zeta \in \T$ are such that \eqref{eq22bis} holds. Then 
\begin{equation}\notag
\lim_{n \to \infty} \sup_{z \in \Gamma_{\zeta, n}} \frac{1}{n}\sum_{k=0}^{n-1}|\Im \log O_{k}(z)|  = 0.
\end{equation}
\end{Lem}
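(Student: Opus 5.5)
The plan is to bound the harmonic conjugate by a sum of Poisson integrals along the dyadic radial sequence $z_j(\zeta)=(1-2^{-j})\zeta$, and then control each Poisson integral by the entropy $\K(\mu,z_j(\zeta))$ and by the Schur functions $f_k(z_j(\zeta))$.

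First, since $O_k$ is outer with $O_k(0)>0$ and $|O_k|^2 = 1-|f_k|^2$ on $\T$, we have $\Im\log O_k = \frac12\mathcal{Q}u_k$ with $u_k = \log(1-|f_k|^2)\le 0$.

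\emph{Kernel domination.} For $z\in\Gamma_{\zeta,n}$, let $N=N(z)$ be such that $1-|z|\asymp 2^{-N}$. Since $|z-\zeta|\ge 1/n$ and $z$ lies in the Stolz angle, we have $2^{N}\lesssim n$. The conjugate Poisson kernel satisfies $|\Im\frac{1+\bar\xi z}{1-\bar\xi z}|\lesssim |1-\bar\xi z|^{-1}$. On the arc $|\xi-\zeta|\asymp 2^{-m}$ with $m\le N$, this is $\lesssim 2^m\asymp \frac{1-|z_m|^2}{|1-\bar\xi z_m|^2}$. On $|\xi-\zeta|\lesssim 2^{-N}$ it is $\lesssim 2^N$, which is comparable to the Poisson kernel at $z_N$ there. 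The Stolz angle geometry makes the implicit constants uniform in $z\in\Gamma_\zeta$. Hence
$$
|\mathcal{Q}u_k(z)| \lesssim \sum_{j=0}^{N}\Pp(|u_k|, z_j), \qquad z_j=z_j(\zeta).
$$
I expect this step to be the main technical point. The inequality itself is elementary, but the averaged bound obtained from it must be uniform over all $z\in\Gamma_{\zeta,n}$ and must involve only $O(\log n)$ dyadic scales.

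\emph{Poisson integrals via entropy.} By \eqref{entropyschur-n} and \eqref{decayentropy},
$$
\Pp(-u_k,z_j) = \K(\mu_k,z_j) - \log(1-|z_jf_k(z_j)|^2) \le \K(\mu,z_j) + \log\frac{1}{1-|f_k(z_j)|^2}.
$$
Averaging over $0\le k\le n-1$ and splitting the indices $k$ according to $|f_k(z_j)|\le 1/2$ or $>1/2$, exactly as in the proof of Lemma \ref{l32}, I will bound the two pieces separately. On the small piece, use $\log\frac{1}{1-x^2}\lesssim x^2$ and then \eqref{entf}, which gives $\frac1n\sum_k |f_k(z_j)|^2 \lesssim \frac{\K(\mu,z_j)}{n(1-|z_j|)} \asymp \K(\mu,z_j)2^{j}/n$. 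On the big piece, argue as in \eqref{eq52}: once $\K(\mu,z_j)\le\log 2$, its contribution is $\lesssim \K(\mu,z_j)$. This holds for all $j\ge j_0$ by \eqref{eq22bis}. For the finitely many $j<j_0$ the points $z_j$ lie in a fixed compact set, and the average tends to $0$ as $n\to\infty$ by Lemma \ref{l31} and the argument at the end of Lemma \ref{l32}.

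\emph{Summation.} With $K_j=\K(\mu,z_j)$ and $2^N\lesssim n$, combining the above gives
$$
\sup_{z\in\Gamma_{\zeta,n}}\frac1n\sum_{k=0}^{n-1}|\Im\log O_k(z)| \lesssim \sum_{j<J}\bigl(\eta_n(z_j)+\etat_n(z_j)\bigr) + \sum_{j\ge J}K_j + \sum_{j\le \log_2 n + C} K_j\,2^{j}/n .
$$
For a fixed $J$, the first sum tends to $0$ as $n\to\infty$. The second sum is small for $J$ large by \eqref{eq22bis}. The third sum is a weighted average of $K_j$ with geometric weights $2^{j-\log_2 n}$ concentrated near $j\approx\log_2 n$, and it tends to $0$ because $K_j\to 0$. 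Letting first $n\to\infty$ and then $J\to\infty$ gives the lemma.
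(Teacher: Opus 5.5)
Your proof is correct. Its core estimates coincide with the paper's, but you extract smallness differently. In both arguments, $\Im\log O_k$ along $z_j=(1-2^{-j})\zeta$ is controlled by $\Pp(|\log(1-|f_k|^2)|,z_j)$. This in turn is bounded via \eqref{entropyschur-n}, \eqref{decayentropy}, \eqref{entf}, the split $|f_k(z_j)|\le 1/2$ versus $>1/2$, and $2^j\lesssim n$. Your direct kernel domination $|\mathcal{Q}u_k(z)|\lesssim\sum_{j\le N(z)}\Pp(|u_k|,z_j)$ packages, in one step, the paper's telescoping sum \eqref{112} together with its final step from $z_{j(z)}$ to $z$.

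The difference lies in how smallness is obtained. The paper's telescoping yields only the bounded quantity $c(\mu,\zeta)\sum_{s\le j(z)}\K(\mu,z_s)$. To make it small, the paper reruns the argument for the Schur iterates $\mu_\ell$. It then uses $\K(\mu_\ell,z_s)\le \K(\mu,z_s)$, the pointwise limit $\K(\mu_\ell,z_s)\to 0$, and dominated convergence; this is where \eqref{eq22bis} enters. Finally, it disposes of the first $\ell$ indices with Lemma \ref{l00}.

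You split in scales rather than in the Schur index. At the finitely many fixed points $z_j$, $j<J$, you use that the Ces\`aro averages $\eta_n(z_j)+\etat_n(z_j)$ tend to zero. For $j\ge J$ you use the tail $\sum_{j\ge J}\K(\mu,z_j)$. This avoids Lemma \ref{l00} and the shift $\mu\mapsto\mu_\ell$. By working only where $\K(\mu,z_j)\le\log 2$, it also avoids the constant $c_2(\mu,\zeta)$. The paper's route instead reuses the monotonicity of entropy along Schur's algorithm, the same device it exploits again in Section 6.

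A minor remark: your third sum $\sum_{j\le \log_2 n+C}\K(\mu,z_j)2^j/n$ is redundant. For $J\le j\le N$ one has $2^j/n\lesssim 1$, so it is absorbed into $\sum_{j\ge J}\K(\mu,z_j)$. For $j<J$ it is already contained in $\eta_n(z_j)+\etat_n(z_j)$.
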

\beginpf We will use outer functions $O_n$ from Lemma \ref{l1}. Set $\alpha_k(z) = \Im\log O_k(z)$ for $z\in \D$, $k \ge 0$. Given a positive integer $n$, let $K_n$ be the integer part of $\log_2 n$. We first show that there exists a constant $c(\mu,\zeta)>0$ such that  
\begin{equation}
\label{eq116}
\frac{1}{n}\sum_{k=0}^{n-1}|\alpha_k(z_{j+1}) - \alpha_k(z_j)| \le c(\mu,\zeta)\K(\mu, z_j),  \qquad
1\le j \le K_n,
\end{equation}
where we set $z_j = z_j(\zeta) = (1 - 2^{-j})\zeta$ for brevity. Fix $1 \le j \le K_n$. We have  
\begin{align*}
|\alpha_k(z_{j+1}) - \alpha_k(z_j)| 
&\le \frac{1}{2}\int_{\T}|\log(1-|f_k|^2)| \left|\frac{1+\bar\xi  z_{j+1}}{1-\bar\xi z_{j+1}} - \frac{1+\bar\xi  z_{j}}{1-\bar\xi z_j} \right|dm\\
&\lesssim \int_{\T}|\log(1-|f_k|^2)|\frac{1 - |z_{j}|^2}{|1-\bar\xi z_j|^2}dm\\
&=  \K(\mu_k , z_j) -  \log (1- |z_j f_k (z_j)|^2). 
\end{align*}
Last identity is formula \eqref{entropyschur-n}. Since by \eqref{decayentropy} we have $\K (\mu_k , z ) \le \K (\mu , z)$ for any $z \in \D$, summing up over $k$ for fixed $j$ and using $|z_j f_k (z_j)| \le |f_k (z_j)|$,  we obtain
$$
\frac{1}{n}\sum_{k=0}^{n-1}|\alpha_k(z_{j+1}) - \alpha_k(z_j)| \lesssim \K(\mu, z_j) -\frac{1}{n} \sum_{k=0}^{n-1} \log (1- |f_k (z_j)|^2).
$$
As in \eqref{eqIns} and \eqref{eqInb}, consider the sets $I_{n, s} 
= \left\{0 \le k \le n-1 \colon  |f_k(z_j)| \le 1/2\right\}$ and $I_{n, b} = \left\{0 \le k \le n-1 \colon  |f_k(z_j)| > 1/2\right\}$. Since $j \le K_n$, we have $1/ n \lesssim 1- |z_j|^2$ and then 
\begin{align*}
-\frac{1}{n} \sum_{k \in I_{n,s}} \log (1- |f_k (z_j)|^2) 
& \asymp \frac{1}{n} \sum_{k \in I_{n,s}} |f_k (z_j)|^2 \\
& \lesssim \sum_{k=0}^{n-1} (1-|z_j|^2) |f_k (z_j)|^2 \lesssim \K (\mu , z_j ),
\end{align*}
where in the last estimate we have used \eqref{entf}. Next we consider the contribution coming from indices in $I_{n, b}$. Since $c_1(\mu,\zeta) = \sup_{j \ge 0}\K (\mu , z_j )$ is finite, formula \eqref{entf} gives 
$$
\sup_{k,j} \frac{(1- |z_j|^2) |f_k (z_j)|^2}{1 - |f_k (z_j)|^2} \le e^{c_1(\mu,\zeta)}. 
$$
It follows that for $k \in I_{n, b}$ we have
\begin{align*}
- (1- |z_j|^2) \log (1 - |f_k (z_j)|^2) 
&\le \frac{1- |z_j|^2}{1 - |f_k (z_j)|^2} \le \frac{1}{4}\frac{(1- |z_j|^2)|f_k (z_j)|^2}{1 - |f_k (z_j)|^2} \le \\
&\le c_2(\mu,\zeta) \log \left(1 + \frac{(1- |z_j|^2) |f_k (z_j)|^2}{1 - |f_k (z_j)|^2} \right),  
\end{align*}
with the constant 
$$
c_2(\mu,\zeta) = \sup_{0\le y \le e^{c_1(\mu,\zeta)}} \frac{y/4}{\log(1 + y)}.
$$ 
Since $1/n \lesssim 1 - |z_j|^2$, we deduce from \eqref{entf} the bound
\begin{equation*}
-\frac{1}{n+1} \sum_{k \in I_{n,b}} \log (1- |f_k (z_j)|^2) \le c_2(\mu,\zeta)\K(\mu , z_j). 
\end{equation*}
This finishes the proof of \eqref{eq116}. Now using \eqref{eq116} and the fact that $\alpha_k(z_0) = \Im \log O_k (0) = 0$ for all $k\ge 0$, a telescopic sum argument shows that
\begin{equation}\label{112}
\frac{1}{n}\sum_{k=0}^{n-1}|\alpha_k(z_{j+1})| \le c(\mu, \zeta) \sum_{s = 0}^{j}\K(\mu, z_s), \qquad 1 \le j \le K_n. 
\end{equation}
Next, given $ z \in \Gamma_{\zeta , n}$, let $j(z) \ge 1$ be the integer such that $|z_{j(z)}| < |z| \le |z_{j(z)+1}|$. Repeating the proof of \eqref{eq116}, we obtain
\begin{equation*}
\frac{1}{n} \sum_{k=0}^{n-1} |\alpha_k(z) - \alpha_k(z_{j(z)})| \le c(\mu, \zeta)\K(\mu, z_{j(z)}), 
\end{equation*}
with a possibly larger constant $c(\mu, \zeta)$ that still does not depend on $n$ and $z \in \D$. Then \eqref{112} gives 
$$
\frac{1}{n}\sum_{k=0}^{n-1}|\alpha_k(z)| \le c(\mu, \zeta)\sum_{s = 0}^{j(z)}\K(\mu, z_s).
$$
Recall that measures $\mu_n$, $n \ge 0$, were defined using the Schur's algorithm \eqref{eq41}. Since $\K(\mu_n, z) \le \K(\mu, z)$ for every $n \ge 0$, $z \in \D$ by \eqref{decayentropy}, our construction gives $c(\mu_n, \zeta) \le c(\mu, \zeta)$ for every $n \ge 0$. Now take a large number $\ell \ge 0$ and apply the first part of the proof to $\mu_\ell$ in place of $\mu$. We obtain
\begin{equation}\label{eq12}
\frac{1}{n}\sum_{k=0}^{n-1}|\alpha_{\ell+k}(z)| \le c(\mu_\ell, \zeta) \sum_{s = 0}^{j(z)}\K(\mu_\ell, z_s) \le c(\mu, \zeta)\sum_{s = 0}^{\infty}\K(\mu_\ell, z_s).
\end{equation}
Note that  the last expression tends to $0$ as $\ell \to \infty$. Indeed, assumption \eqref{eq22bis} implies that $\sum_{s = s_0}^{\infty}\K(\mu_\ell, z_s) \le \sum_{s = s_0}^{\infty}\K(\mu, z_s) \to 0$ as $s_0 \to \infty$, and we have shown in the proof Lemma \ref{l31} that $\K(\mu_\ell, z_s) \to 0$ for each fixed point $z_s$ as $\ell \to \infty$ (note that  Lemma \ref{l31} holds under the assumption \eqref{eq21} which is weaker than \eqref{eq22}). Since for each fixed $\ell \ge 0$ we have
$$
\lim_{n \to \infty}\sup_{z \in \Gamma_{\zeta,n}}\frac{1}{n}\sum_{s=0}^{\ell}|\alpha_{s}(z)| \le \sum_{s=0}^{\ell}\lim_{n \to \infty}\frac{\sup_{z \in \Gamma_{\zeta,n}} |\alpha_{s}(z)|}{n} = 0,
$$ 
by Lemma \ref{l00}, relation \eqref{eq12} implies the claim. \qed

\medskip

We are now ready to prove the uniform convergence on the truncated cones $\Gamma_{\zeta, n}$. 

\begin{Lem}\label{uniftruncated1}
Suppose that $\mu \in \szc$ and $\zeta \in \T$ are such that \eqref{eq22bis} holds. Then 
\begin{equation}\notag
\lim_{n \to \infty} \sup_{z \in \Gamma_{\zeta, n}} \frac{1}{n}\sum_{k=0}^{n-1}|\varphi_k^* (z) D_\mu (z)-1| =0.
\end{equation}
\end{Lem}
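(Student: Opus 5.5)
We will use the Khrushchev-type representation of Lemma \ref{l1}, $\phi_k^* D_\mu = O_k/(1 - zb_kf_k)$, which is valid in all of $\D$. Since all factors are analytic and zero-free, we may write
$$
\log(\phi_k^* D_\mu)(z) = \log O_k(z) - \log(1 - zb_k(z)f_k(z)),
$$
with the principal branches normalized at $z=0$. Using $|e^w - 1| \le |w|e^{|w|}$, it suffices to control the Cesàro means of $|\log \phi_k^*(z) D_\mu(z)|$ uniformly on $\Gamma_{\zeta,n}$, while handling separately the indices where this logarithm is large. The bound on the logarithm splits into three pieces:
$$
|\log \phi_k^* D_\mu| \le \tfrac12\bigl|\log|O_k|^{2}\bigr| + |\Im\log O_k| + |\log(1-zb_kf_k)|.
$$

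\textbf{Step 1: the three averages.} We treat each piece in turn.

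\begin{itemize}
\item The averaged last term is bounded by $\etat_n(z)$ by \eqref{eq42}.
\item Since $|O_k| \le 1$ in $\D$ (its boundary modulus is $\sqrt{1-|f_k|^2}\le 1$), we have $\bigl|\log|O_k|^2\bigr| = \log|O_k|^{-2}$. By \eqref{eq43}, its average is therefore bounded by $\eta_n(z) + \etat_n(z)$.
\item The imaginary parts are handled by Lemma \ref{lemImBis}, which is exactly where the hypothesis \eqref{eq22bis} enters.
\end{itemize}

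Since \eqref{eq22bis} implies \eqref{eq21}, Lemmas \ref{l31} and \ref{l32} give $\sup_{\Gamma_{\zeta,n}}(\eta_n+\etat_n) \to 0$. Altogether,
$$
\sup_{z\in\Gamma_{\zeta,n}} \frac1n\sum_{k=0}^{n-1} |L_k(z)| \to 0, \qquad L_k := \log(\phi_k^* D_\mu).
$$

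\textbf{Step 2: passing from the logarithm to the function.} Fix $z \in \Gamma_{\zeta,n}$ and split the indices $\{0,\dots,n-1\}$ as
$$
J_s = \{k : |L_k(z)| \le 1\}, \qquad J_b = \{k : |L_k(z)| > 1\}.
$$

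On $J_s$ we have $|\phi_k^*D_\mu - 1| \lesssim |L_k|$, so the contribution of $J_s$ is bounded by the quantity in Step 1.

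On $J_b$ we use $|\phi_k^* D_\mu - 1| \le 1 + |\phi_k^* D_\mu|$ together with the Cauchy--Schwarz inequality:
$$
\frac1n\sum_{k\in J_b}|\phi_k^*D_\mu| \le \Bigl(\frac{|J_b|}{n}\Bigr)^{1/2}\Bigl(\frac1n\sum_{k=0}^{n-1}|\phi_k^*D_\mu|^2\Bigr)^{1/2}.
$$
The second factor is at most $(1+\delta_n(\mu,z))^{1/2}$ by \eqref{eq45}. On $\Gamma_{\zeta,n}$ we have $1-|z|\gtrsim 1/n$, and $\K(\mu,\cdot)$ is bounded on $\Gamma_\zeta$ because \eqref{eq22} forces $\K(\mu,z)\to0$ along the cone. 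Hence $\delta_n(\mu,z)$ is bounded uniformly, and in fact $R_2(n)\to0$.

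For the first factor, Chebyshev's inequality gives $|J_b|/n \le \frac1n\sum_k|L_k(z)|$, which tends to $0$ uniformly by Step 1. So the $J_b$ contribution is $O\bigl(\sqrt{\text{Step 1 quantity}}\bigr) \to 0$ as well.

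\textbf{Main obstacle.} The only genuinely new input is the uniform control of the averaged $|\Im \log O_k|$, and that is already supplied by Lemma \ref{lemImBis}. The remaining care is in checking that the branch of $\log(\phi_k^*D_\mu)$ really decomposes additively as written. This holds because all three functions are zero-free in $\D$ and equal positive reals at $0$ (note $1 - zb_kf_k = 1$ at $z=0$). One must also check that no uniformity is lost when passing from the Cesàro bounds on $L_k$ to bounds on $\phi_k^* D_\mu - 1$; Step 2 is designed to handle exactly this, since every estimate there is uniform in $z\in\Gamma_{\zeta,n}$.
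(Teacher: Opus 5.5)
Your proof is correct and follows the paper's argument: the factorization from Lemma \ref{l1}, with \eqref{eq42}, \eqref{eq43}, Lemma \ref{lemImBis} and Lemmas \ref{l31}, \ref{l32} controlling the modulus and argument parts of the logarithm. The only difference is in the large terms: the paper splits by $|\phi_k^*D_\mu|>2$ and handles those indices with Lemma \ref{lemSt-n}, whereas you split by $|L_k|>1$ and use Chebyshev plus Cauchy--Schwarz with \eqref{eq45}, which is equally valid and slightly more self-contained.
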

\beginpf For $z \in \Gamma_{\zeta , n}$, consider the sets of indices 
\begin{align*}
J_{n, s} &= \{0 \le k \le n-1 \colon |\varphi_k^* (z) D_\mu (z)| \le 2\},\\
J_{n, b} &= \{0 \le k \le n-1 \colon |\varphi_k^* (z) D_\mu (z)| > 2\}. 
\end{align*}
By Lemma \ref{lemSt-n}, we have
\begin{align*}
\frac{1}{n}\sum_{k \in J_{n, b}}\Bigl|\phi_k^*(z) D_{\mu}(z) - 1\Bigr| 
&\lesssim \frac{1}{n}\sum_{k=0}^{n-1}\Bigl||\phi_k^*(z) D_{\mu}(z)|^2 - 1\Bigr|\\
&\lesssim \delta_n(\mu,z) + \eta_n(z) + \etat_n(z), 
\end{align*}
which by Lemma \ref{l31} and Lemma \ref{l32} tends to $0$ uniformly on $\Gamma_{\zeta , n}$ as $n \to \infty$. Next, Lemma \ref{lemAv} says that 
\begin{align*}
\frac{1}{n}\sum_{k=0}^{n-1} \Bigl| \log(1 - z b_k(z) f_k(z)) \Bigr| \le \etat_n(z), \quad z \in \D , & \\
\frac{1}{n}\sum_{k=0}^{n-1}\log|O_k(z)|^{-2} \le \eta_n(z) + \etat_n(z), \quad  z \in \D. 
\end{align*}
We also have $\varphi_k^* D_\mu = O_k (1-z b_k f_k)^{-1}$ by Lemma \ref{l1}. Using the elementary estimate 
$$
|r e^{i \theta} - 1| \le |r - 1| + |e^{i\theta} - 1| \lesssim |\log r| + |\theta|,
$$ 
for $0<r\le 2$, $|\theta| \le \pi$, we  get for $k \in J_{n, s}$
\begin{align*}
|\varphi_k^* D_\mu -1 | 
&\lesssim \bigl|\log |O_k(1-z b_k f_k)^{-1}|\bigr| + \bigl|\Im\log O_k (1-z b_k f_k)^{-1}\bigr|, \\
&\le \bigl|\log |O_k|\bigr| + 2\bigl|\log (1-z b_k f_k)^{-1}\bigr|+|\Im\log O_k|, \\
&\le \log |O_k|^{-2} + 2\bigl|\log (1-z b_k f_k)\bigr|+|\Im\log O_k|. 
\end{align*}
Then 
\begin{equation*}
    \frac{1}{n} \sum_{k \in J_{n, s}} |\varphi_k^* (z) D_\mu (z) -1 | \lesssim \eta_n (z) + \etat_n (z) + \frac{1}{n} \sum_{k \in J_{n, s}} |\Im \log O_k (z)|,
\end{equation*}
which by Lemma \ref{l31}, Lemma \ref{l32} and Lemma \ref{lemImBis} tends to $0$ uniformly on $\Gamma_{\zeta, n}$. This finishes the proof. \qed

\medskip

Next we focus on the uniform convergence on $\Gamma'_{\zeta, n} = \Gamma_\zeta \setminus \Gamma_{\zeta, n}$. 

\begin{Lem}\label{uniftruncated2}
Let $\mu \in \szc$ and $\zeta \in \T$ be such that $D_\mu$ has a non-zero finite non-tangential limit at $\zeta$. Assume that condition \eqref{eq22} holds at $\zeta$. Then 
\begin{equation}\notag
\lim_{n \to \infty} \sup_{z \in \Gamma'_{\zeta , n}} \frac{1}{n}\sum_{k=0}^{n-1}|\phi_k^*(z) D_\mu (z)-1| =0.
\end{equation}
\end{Lem}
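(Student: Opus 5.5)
The plan is to mirror the argument used for the region $\Gamma'_{\zeta,n}$ in the proof of Theorem \ref{thm1}. We transfer information from the point $\lambda_n=(1-1/n)\zeta\in\Gamma_{\zeta,n}$, where Lemma \ref{uniftruncated1} already gives the strong Ces\`aro convergence $\frac1n\sum_{k<n}|\phi_k^*(\lambda_n)D_\mu(\lambda_n)-1|\to0$. The transfer goes through the auxiliary polynomials $\tilde\phi^*_{\lambda_n,k}$, which have small distortion in $B(\zeta,1/n)\supset\Gamma'_{\zeta,n}$ by Lemma \ref{domain-bound}. Fix $\eps>0$ and split the indices $\{0,\dots,n-1\}$ into $I_{n,s}(\eps)$ and $I_{n,b}(\eps)$ from \eqref{eq64}, \eqref{eq65} with $\lambda=\lambda_n$. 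As in the proof of Theorem \ref{thm1}, $|I_{n,b}(\eps)|/n\lesssim \etat_n(\lambda_n)/\log\frac1{1-\eps}\to0$ by Lemma \ref{l32}.

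For the bad indices, use $|\phi_k^*D_\mu-1|\le 1+|\phi_k^*D_\mu|\lesssim 1+\bigl||\phi_k^*D_\mu|^2-1\bigr|$. Lemma \ref{l36} with $\Lambda=I_{n,b}(\eps)$ then bounds $\sup_{\Gamma'_{\zeta,n}}\frac1n\sum_{k\in I_{n,b}(\eps)}|\phi_k^*(z)D_\mu(z)-1|$ by $\frac{|I_{n,b}(\eps)|}{n}+R_1(n)\frac{|I_{n,b}(\eps)|}n+R_1(n)R_2(n)$. This tends to $0$, since $D_\mu$ has a nonzero finite nontangential limit, so $R_1(n)\to1$, and \eqref{eq22} implies \eqref{eq21}.

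For the good indices we need a version of \eqref{eq67} that controls $\phi_k^*$ itself and not only its modulus. By \eqref{varphi}, $\tilde\phi^*_{\lambda_n,k}/\phi^*_k$ equals a positive constant, which is $1+O(\eps)$, times $(1-z\ov{f_{k-1}(\lambda_n)}b_{k-1})/(1-z\ov{f_{k-1}(0)}b_{k-1})$. The numerator is $1+O(\eps)$. Since $f_{k-1}(0)=a_{k-1}\to0$, the denominator is $1+o(1)$ uniformly in $z\in\D$ for $k$ large, and only finitely many small $k$ are affected. Hence $|\phi_k^*(z)-\tilde\phi^*_{\lambda_n,k}(z)|\lesssim(\eps+o(1))|\phi_k^*(z)|$ for $k\in I_{n,s}(\eps)$, and the same holds at $z=\lambda_n$. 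Combined with \eqref{eq45} and $\delta_n\to0$ on the relevant set, it suffices to show
$$\sup_{z\in\Gamma'_{\zeta,n}}\frac1n\sum_{k\in I_{n,s}(\eps)}|\tilde\phi^*_{\lambda_n,k}(z)D_\mu(z)-\tilde\phi^*_{\lambda_n,k}(\lambda_n)D_\mu(\lambda_n)|\to0,$$
after which Lemma \ref{uniftruncated1} at $\lambda_n$ finishes, with one more swap back from $\tilde\phi^*_{\lambda_n,k}(\lambda_n)$ to $\phi_k^*(\lambda_n)$.

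The main step, which is where the non-tangential limit of $D_\mu$ itself and not just of $|D_\mu|$ is used, is this last display. I would write the difference as $\tilde\phi^*_{\lambda_n,k}(\lambda_n)D_\mu(\lambda_n)\bigl[\frac{\tilde\phi^*_{\lambda_n,k}(z)}{\tilde\phi^*_{\lambda_n,k}(\lambda_n)}\frac{D_\mu(z)}{D_\mu(\lambda_n)}-1\bigr]$. The first ratio is $1+O(\sqrt{\K(\mu,\lambda_n)})$ uniformly for $z\in B(\zeta,1/n)$ and $0\le k\le n$, by \eqref{eq115bis} of Lemma \ref{domain-bound}; this is a statement about the complex ratio, which is exactly what is needed here. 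The second ratio tends to $1$ uniformly on $\Gamma'_{\zeta,n}$ because $D_\mu$ has a finite nonzero nontangential limit at $\zeta$. So the bracket is $o(1)$ uniformly. Averaging, the display is bounded by $o(1)\cdot\frac1n\sum_{k\in I_{n,s}(\eps)}|\tilde\phi^*_{\lambda_n,k}(\lambda_n)D_\mu(\lambda_n)|$. This average is $O(1)$ by \eqref{eq74} and \eqref{eq45} at $\lambda_n$, using $\delta_n(\mu,\lambda_n)\to0$. Collecting the estimates gives $\limsup_n\sup_{\Gamma'_{\zeta,n}}\frac1n\sum_{k<n}|\phi_k^*D_\mu-1|\lesssim\eps$, and letting $\eps\to0$ proves the lemma.
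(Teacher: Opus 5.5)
Your argument is essentially the paper's. It uses the same split into $I_{n,s}(\eps)$ and $I_{n,b}(\eps)$ at $\lambda_n=(1-1/n)\zeta$, the same transfer through $\tilde\phi^*_{\lambda_n,k}$ via the complex ratio bound \eqref{eq115bis} and the non-tangential limit of $D_\mu$, and the same final appeal to Lemma \ref{uniftruncated1} at $\lambda_n$. The only bookkeeping difference is this. The paper first removes, via Theorem \ref{thm1}, the indices with $|\phi_k^*(z)D_\mu(z)|>2$, so on the remaining indices the multiplicative $1+O(\eps)$ errors become additive $O(\eps)$ errors. You keep multiplicative errors, and therefore you need $\sup_{z\in\Gamma'_{\zeta,n}}\frac1n\sum_{k<n}|\phi_k^*(z)D_\mu(z)|=O(1)$.

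That bound is the one step to fix: it does not follow from \eqref{eq45}. The quantity $\delta_n(\mu,z)$ has $n(1-|z|^2)$ in the denominator, and this tends to $0$ as $z\to\zeta$ inside $\Gamma'_{\zeta,n}$. So $\delta_n(\mu,\cdot)$ is unbounded on $\Gamma'_{\zeta,n}$ whenever $\K(\mu,r\zeta)/(1-r)$ is unbounded as $r\to1$. This happens, for instance, when $\log w(\xi)=|\xi-\zeta|^{\alpha}$ near $\zeta$ with $\alpha<1/2$: then $\K(\mu,r\zeta)\asymp(1-r)^{2\alpha}$, which is still compatible with \eqref{eq22} and with $D_\mu$ having a nonzero limit at $\zeta$. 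The repair is to take $\Lambda=\{0,\dots,n-1\}$ in Lemma \ref{l36}. This gives $\frac1n\sum_{k<n}|\phi_k^*(z)D_\mu(z)|^2=O(1)$ uniformly on $\Gamma'_{\zeta,n}$, and Cauchy--Schwarz then gives the $L^1$ average you need. At the single point $\lambda_n$ your use of \eqref{eq45} is fine, since $n(1-|\lambda_n|^2)\ge 1$. With this change the proof goes through. The identical claim about $\delta_n$ on $\Gamma'_{\zeta,n}$ in the proof of Lemma \ref{l39} needs the same repair.
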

\beginpf For $z \in \Gamma'_{\zeta , n}$, let $J_{n, b}(z) = \{0 \le k \le n-1 \colon |\varphi_k^* (z) D_\mu (z)| > 2\}$. Observe that
$$
\frac{1}{n}\sum_{k \in J_{n, b}(z)} |\varphi_k^* (z) D_\mu (z)-1| \lesssim \frac{1}{n}\sum_{k \in J_{n, b}(z)}\bigl| |\varphi_k^*(z) D_\mu (z)|^2 - 1 \bigr|, 
$$
which by Theorem \ref{thm1} tends to $0$ uniformly on $\Gamma_\zeta$. Set $\lambda_n = (1- n^{-1}) \zeta$ for $n \ge 1$. Fix $\eps>0$. Recall that we put $f_{-1}(z) = 0$ for all $z \in \D$. Consider the sets of integers 
\begin{align*}
I_{n,s}(\eps) &= \{0 \le k \le n-1 \colon |f_{k-1} (\lambda_n)|\le \eps\}, \\
I_{n,b}(\eps) &= \{0 \le k \le n-1 \colon |f_{k-1} (\lambda_n)|> \eps\}.
\end{align*}
We have 
\begin{equation}\notag
\frac{1}{n} \sum_{k \in I_{n , b}(\eps) \setminus J_{n, b}(z)} |\varphi_k^* (z) D_\mu (z)- 1| \le 
\frac{1}{n} \sum_{k \in I_{n , b}(\eps) \setminus J_{n, b}(z)} 3 \le 3\frac{|I_{n , b}|}{n}. 
\end{equation}
The definition \eqref{etatn} of $\etat_n$ gives  
\begin{equation}\label{primer}
\frac{|I_{n , b}(\eps)|}{n} \lesssim \frac{\etat_n(\lambda_n)}{|\log (1 - \varepsilon)|}, 
\end{equation}
which by Lemma \ref{l32}, tends to $0$ as $n \to \infty$. We see that
\begin{equation}\label{eq122}
\lim_{n \to \infty}\sup_{z \in \Gamma'_{\zeta, n}}\frac{1}{n}\sum_{k \in I_{n,b} (\varepsilon) \setminus J_{n,b}(z)} \left| \varphi_k^* (z) D_\mu(z) -1 \right| = 0
\end{equation}
for every $\eps > 0$. On the other hand, formula \eqref{varphi} gives 
\begin{equation}
    \label{nou}
    \left|\frac{\tilde\phi_{\lambda_n , k}^*(z)}{\phi_{k}^*(z)} - 1 \right| \lesssim \varepsilon, \qquad k \in I_{n, s} (\varepsilon). 
\end{equation}
By Lemma \ref{domain-bound}, see \eqref{eq115bis}, there exists $n_0(\eps)\ge 1$ such that
\begin{equation}\label{segon}
\sup_{z \in \Gamma'_{\zeta , n}}\left|\frac{\tilde\phi_{\lambda_n, k}^*(z)}{\tilde\phi_{\lambda_n, k}^*(\lambda_n)} - 1 \right| \lesssim \varepsilon, \quad 0 \le k \le n, 
\end{equation}
for every $n \ge n_0(\eps)$. Since $D_\mu$ has a non-zero finite non-tangential limit at $\zeta$, we also may assume that
\begin{equation}
    \label{tercer}
    \sup_{z \in \Gamma'_{\zeta , n}} \left|\frac{D_\mu (z)}{D_\mu (\lambda_n)} - 1 \right| \le \eps, \quad n \ge n_0(\eps).
\end{equation}
For $\eps > 0$ small enough, a combination of \eqref{nou}, \eqref{segon} and \eqref{tercer} gives 
$$
\sup_{n\ge n_0(\eps)}\sup_{z \in \Gamma'_{\zeta,n}}\sup_{k \in I_{n,s}(\eps)\setminus J_{n,b}(z)}\left| \varphi_k^* (z) D_\mu (z) - \varphi_k^* (\lambda_n) D_\mu (\lambda_n)\right| \lesssim \eps. 
$$
Then,
$$
\frac{1}{n} \sum_{k \in I_{n,s} (\varepsilon) \setminus J_{n,b}(z)} \left| \varphi_k^* (z) D_\mu (z) -1 \right| \lesssim \frac{1}{n} \sum_{k \in I_{n,s} (\varepsilon) \setminus J_{n,b}(z)} \left| \varphi_k^* (\lambda_n) D_\mu (\lambda_n) -1 \right| + \eps, 
$$
for $n \ge n_0(\eps)$, $z \in \Gamma'_{\zeta,n}$. Since $\lambda_n \in \Gamma_{\zeta,n}$, we have 
$$
\lim_{n \to \infty}\frac{1}{n}\sum_{k \in I_{n,s} (\varepsilon) \setminus J_{n,b}(z)} \left| \varphi_k^* (\lambda_n) D_\mu (\lambda_n) -1 \right| = 0,
$$
for every $\eps > 0$ by Lemma \ref{uniftruncated1}. Thus, for any small enough $\varepsilon >0$ we have
\begin{equation}\label{eq123}
\limsup_{n \to \infty}\sup_{z \in \Gamma'_{\zeta, n}}\frac{1}{n} \sum_{k \in I_{n,s} (\varepsilon) \setminus J_{n,b}(z)} \left| \varphi_k^* (z) D_\mu (z) -1 \right| \lesssim \eps .
\end{equation}
Summing up \eqref{eq122}, \eqref{eq123}, and sending $\eps \to 0$, we conclude the proof. \qed

\medskip

\noindent{\bf Proof of Theorem \ref{thm2}.} The result follows from Lemma \ref{uniftruncated1} and Lemma \ref{uniftruncated2}. \qed

\medskip

\section{Proof of Theorem \ref{thm3}}
\begin{Lem}\label{lemChain}
Let $\mu \in \szc$, $\zeta\in \T$ be such that \eqref{eq23} holds. Consider the Schur functions $f_n$ of $\mu$, see \eqref{eq41}. 
We have $\lim\limits_{n \to \infty} \sup\limits_{ z \in \Gamma_\zeta } |f_{n}(z) | = 0$.
\end{Lem}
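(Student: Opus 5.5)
The plan is a chaining argument along the points $z_j = z_j(\zeta) = (1-2^{-j})\zeta$. The key local estimate says that the oscillation of $f_n$ over a hyperbolic ball around $z_j$ is bounded by $\sqrt{\K(\mu, z_j)}$, uniformly in $n$. Summing these oscillations and using \eqref{eq23} then controls $f_n$ all the way up to $\zeta$. Near the origin we use the compact convergence $\max_{|z|\le r}|f_n(z)|\to 0$ already proved inside Lemma \ref{l32}.

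\emph{Local variance estimate.} Put $\varphi(w) = -\log(1-|w|^2)$ for $w\in\D$. A direct computation shows that its real Hessian dominates $2I$, so
$$
\varphi(w)-\varphi(a)-\langle\nabla\varphi(a), w-a\rangle \ge |w-a|^2, \qquad a, w \in \D.
$$
Take $a = f_n(z)$ and $w = f_n(\xi)$, and integrate against the Poisson kernel at $z$. Since $\Pp(f_n, z) = f_n(z)$, the linear term vanishes, and we get
$$
\Pp\bigl(|f_n - f_n(z)|^2, z\bigr) \le \Pp(\varphi(f_n), z) - \varphi(f_n(z)).
$$
By \eqref{entropyschur-n}, the right-hand side equals
$$
\K(\mu_n, z) + \log\frac{1-|zf_n(z)|^2}{1-|f_n(z)|^2} = \K(\mu_n,z) + \log\Bigl(1+\frac{(1-|z|^2)|f_n(z)|^2}{1-|f_n(z)|^2}\Bigr).
$$
The last logarithm is the $k=0$ term of \eqref{entf} for $\mu_n$, so it is at most $\K(\mu_n,z)$. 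Together with \eqref{decayentropy} this yields
$$
\Pp\bigl(|f_n - f_n(z)|^2, z\bigr)\le 2\K(\mu_n,z)\le 2\K(\mu,z).
$$
Next fix $A>0$. For $z'$ with $d_H(z',z)\le A$, the Poisson kernels satisfy $P_{z'}\le C_A P_z$. By Cauchy--Schwarz this gives
$$
|f_n(z')-f_n(z)|\le \sqrt{\Pp(|f_n-f_n(z)|^2, z')}\le \sqrt{2C_A\,\K(\mu,z)}.
$$

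\emph{Chaining.} Choose $A$ so large that every point of $\Gamma_\zeta$ lies within hyperbolic distance $A$ of some $z_j$, and so that $d_H(z_j,z_{j+1})\le A$ for all $j$; this is possible since consecutive $z_j$ are at bounded hyperbolic distance. Telescoping along the chain then gives, for $j\ge j_0$ and every $z$ in the hyperbolic $A$-ball around $z_j$,
$$
|f_n(z)|\le |f_n(z_{j_0})| + C\sum_{s\ge j_0}\sqrt{\K(\mu,z_s)},
$$
where $C$ depends only on $A$ and not on $n$. Given $\eps>0$, first fix $j_0$ so that the tail sum is below $\eps$, which is possible by \eqref{eq23}. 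The remaining part of $\Gamma_\zeta$, together with the point $z_{j_0}$, lies in a fixed compact disk $\{|z|\le r\}$ with $r<1$. By the argument at the end of the proof of Lemma \ref{l32}, $\max_{|z|\le r}|f_n(z)|\to0$, so this part is also below $\eps$ for all large $n$. Hence $\limsup_n\sup_{\Gamma_\zeta}|f_n|\lesssim\eps$, and letting $\eps\to0$ proves the lemma.

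The main obstacle is the local variance estimate. It needs uniformity in $n$, which is where the monotonicity \eqref{decayentropy} enters. It also needs the Jensen defect of $\varphi$ to be expressed exactly through the entropy, which is where \eqref{entropyschur-n} and the first term of \eqref{entf} are used; the Hessian lower bound for $\varphi$ is what makes this defect control the variance. Once the oscillation of $f_n$ on hyperbolic balls is bounded by $\sqrt{\K}$, the summability condition \eqref{eq23} is precisely what makes the chaining work. This also explains why the square root appears in \eqref{eq23}, in contrast with \eqref{eq22}.
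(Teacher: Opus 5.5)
Your argument is correct and follows the same plan as the paper's proof. You chain along $z_j=(1-2^{-j})\zeta$, pass between neighbouring points by Harnack, bound each oscillation of $f_n$ by $\sqrt{\K}$ uniformly in $n$ via \eqref{decayentropy}, and sum using \eqref{eq23}.

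The one substantive difference is the local estimate. The paper cites Theorem~2 of \cite{BD21} for $\Pp(|f_n-f_n(z_k)|,z_k)\lesssim\sqrt{\K(\mu_n,z_k)}$. You instead derive an $L^2$ version directly from the strong convexity of $\varphi(w)=-\log(1-|w|^2)$ and \eqref{entropyschur-n}, which makes the lemma self-contained at little cost.

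That derivation contains a sign slip. The Jensen defect is
\[
\Pp(\varphi(f_n),z)-\varphi(f_n(z))=\K(\mu_n,z)-\log\Bigl(1+\frac{(1-|z|^2)|f_n(z)|^2}{1-|f_n(z)|^2}\Bigr),
\]
with a minus sign, not a plus. As a check, for a constant $f_n\equiv c\neq 0$ the left-hand side vanishes, while your expression equals $2\K(\mu_n,z)>0$. By \eqref{entf}, the right-hand side is exactly $\K(\mu_{n+1},z)$. So the correct statement is the sharper bound $\Pp(|f_n-f_n(z)|^2,z)\le\K(\mu_{n+1},z)\le\K(\mu,z)$, and everything downstream survives unchanged.

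You also treat the initial segment of the chain differently. You use the uniform convergence $f_n\to 0$ on compact disks from the end of the proof of Lemma~\ref{l32}. The paper instead proves $|f_n(z)|\lesssim\sum_k\sqrt{\K(\mu_n,z_k)}+|f_n(z_0)|$ and makes the sum vanish using pointwise convergence $\K(\mu_n,z_k)\to 0$ plus domination by \eqref{eq23}. This difference is cosmetic.
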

\beginpf Recall the notation $z_k = z_k(\zeta) = (1 - 2^{-k}) \zeta$, $k\ge 0$. We have
\begin{align*}
|f_n(z_{k+1}) - f_n(z_k)| 
&\le\Pp(|f_n - f_n(z_k)|, z_{k+1}) \lesssim \Pp(|f_n - f_n(z_k)|, z_{k}) \lesssim \sqrt{\K(\mu_n, z_k)},
\end{align*}
see Theorem 2 of \cite{BD21} for the last estimate. Thus, for every $j \ge 1$ we have
$$
|f_n(z_j) - f_n(z_0)| \le \sum_{k=0}^{j-1}|f_n(z_{k+1}) - f_n(z_k)| \lesssim \sum_{k=0}^{j-1}\sqrt{\K(\mu_n, z_k)}.
$$
Given $z \in \Gamma_\zeta$ pick the positive integer $j(z)$ such that $2^{-j(z)-1} \le 1- |z| < 2^{-j(z)}$. A variant of the previous argument shows $|f_n (z) - f_n (z_{j(z)})| \lesssim \sqrt{\K(\mu_n , z_{j(z)})}$. Hence
\begin{equation}\label{sumsqrt}
|f_n(z)|  \lesssim \sum_{k=0}^{\infty}\sqrt{\K(\mu_n, z_k)} + |f_n (z_0)|. 
\end{equation}
By Szeg\H{o} theorem, see \eqref{szego}, we have
\begin{equation}\label{eq5}
\lim_{n \to \infty} \int_{\T} \log(1-|f_n|^2) dm = 0.
\end{equation}
It follows that $f_n \to 0$ in Lebesgue measure on $\T$  as $n \to \infty$. In particular, we have $\lim_{n \to \infty}f_n(z) = 0$ for every $z \in \D$. Moreover, \eqref{eq5} yields $\lim_{n \to \infty}\K(\mu_n, z) = 0$ for each $z \in \D$, see \eqref{entropyschur-n} Since $\K(\mu_{n}, z) \le \K(\mu, z)$ at every $z \in \D$ by \eqref{decayentropy}, we now derive from \eqref{eq23} that
\begin{align}
\limsup_{n \to \infty }\sum_{k = 0}^\infty \sqrt{\K(\mu_{n}, z_k)} 
&= \lim_{\ell \to \infty}\limsup_{n \to \infty}\sum_{k = \ell}^\infty \sqrt{\K(\mu_n, z_k)} \notag\\
&\le \lim_{\ell \to \infty }\sum_{k = \ell}^\infty \sqrt{\K(\mu, z_k)} = 0, \label{eq4}
\end{align}
which completes the proof. \qed

\medskip

\begin{Lem}\label{lemRe}
Let $\mu \in \szc$, $\zeta\in \T$ be such that \eqref{eq23} holds. Consider the outer functions $O_n$, $n \ge 0$, from Lemma \ref{l1}. We have
$$
\lim_{n \to \infty} \sup_{z \in \Gamma_\zeta} |\Re \log O_{n}(z)| = 0.
$$
\end{Lem}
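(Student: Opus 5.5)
We need $\sup_{z\in\Gamma_\zeta}|\log|O_n(z)||\to 0$. Since $|O_n|^2 = 1-|f_n|^2$ on $\T$ and $O_n$ is outer, we have $\Re\log O_n = \tfrac12\Pp(\log(1-|f_n|^2),\cdot)\le 0$. So it suffices to show that
$$
\sup_{z\in\Gamma_\zeta}\bigl(-\Pp(\log(1-|f_n|^2),z)\bigr)\to 0 .
$$
By \eqref{entropyschur-n},
$$
-\Pp(\log(1-|f_n|^2),z)=\K(\mu_n,z)-\log(1-|zf_n(z)|^2)\le \K(\mu_n,z)+\log\frac{1}{1-|f_n(z)|^2}.
$$
The plan is to control the two terms separately, uniformly on $\Gamma_\zeta$.

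\textbf{Second term.} By Lemma \ref{lemChain}, $\sup_{\Gamma_\zeta}|f_n|\to0$. Hence $\sup_{\Gamma_\zeta}\log\frac{1}{1-|f_n|^2}\to 0$ immediately.

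\textbf{First term.} We need $\sup_{z\in\Gamma_\zeta}\K(\mu_n,z)\to0$. Set $z_k=(1-2^{-k})\zeta$ and, for $z\in\Gamma_\zeta$, let $j(z)$ be as in the proof of Lemma \ref{lemChain}. Points of $\Gamma_\zeta$ with comparable distance to $\T$ have comparable Poisson kernels, so
$$
-\Pp(\log(1-|f_n|^2),z)\lesssim -\Pp(\log(1-|f_n|^2),z_{j(z)}),
$$
with a constant depending only on the aperture of $\Gamma_\zeta$. The same identity \eqref{entropyschur-n} at $z_{j(z)}$ then gives
$$
-\Pp(\log(1-|f_n|^2),z)\lesssim \K(\mu_n,z_{j(z)})+\log\frac{1}{1-|f_n(z_{j(z)})|^2}.
$$
This means it suffices to prove $\sup_k\K(\mu_n,z_k)\to0$ as $n\to\infty$, and in fact this reduction bypasses the need to handle $\K(\mu_n,z)$ directly at general $z$.

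\textbf{Main step.} To show $\sup_k\K(\mu_n,z_k)\to 0$, split at a level $\ell$. For $k\ge\ell$, \eqref{decayentropy} gives
$$
\K(\mu_n,z_k)\le \K(\mu,z_k)\le \Bigl(\sum_{s\ge \ell}\sqrt{\K(\mu,z_s)}\Bigr)^2,
$$
which is small uniformly in $n$ once $\ell$ is large, by \eqref{eq23}. For the finitely many $k<\ell$, we use $\K(\mu_n,z_k)\to0$ pointwise, as established in the proof of Lemma \ref{lemChain} via \eqref{eq5}. Letting first $n\to\infty$ and then $\ell\to\infty$ yields the claim.

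I expect the only delicate point to be the Harnack-type comparison of Poisson kernels between $z$ and $z_{j(z)}$. This is standard, since $1-|z|\asymp 1-|z_{j(z)}|$ and $|z-z_{j(z)}|\lesssim 1-|z|$ inside the Stolz angle. It requires $-\log(1-|f_n|^2)\ge 0$, which holds because $|f_n|<1$.
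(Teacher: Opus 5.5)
Your proof is correct and follows the paper's argument. Both use the identity \eqref{entropyschur-n}, Lemma \ref{lemChain} for the $f_n$-term, and \eqref{decayentropy} together with pointwise decay of $\K(\mu_n,\cdot)$ for the entropy term. The only difference is at one step: the paper simply invokes the non-tangential decay of $\K(\mu,\cdot)$ at $\zeta$, which \eqref{eq23} gives directly only along the radial points $z_k$. Your Harnack comparison of the nonnegative Poisson integral $-\Pp(\log(1-|f_n|^2),\cdot)$ between $z$ and $z_{j(z)}$ reduces everything to those radial points, and so makes that implicit step explicit.
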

\beginpf Formula \eqref{entropyschur} says that
\begin{equation}\label{logmod}
- \log |O_n (z)|^2 = \Pp (\log (1- |f_n|^2) , z) = \K (\mu_n , z) - \log (1- |zf_n (z)|^2), 
\end{equation}
for any $ z \in \D$ and any $n\ge 0$. As we have seen in the proof of Lemma \ref{lemChain}, $\K (\mu_n , z)$ tends to $0$ uniformly on compacts in $\D$. Since $\K (\mu_n , z) \le \K (\mu , z)$ which tends to $0$ as $z$ non-tangentially tends to $\zeta$, we obtain
\begin{equation}\label{eq127}
\lim_{n \to \infty } \sup_{z \in \Gamma_\zeta} \K(\mu_n , z) = 0. 
\end{equation}
By Lemma \ref{lemChain}, we have $\log (1- |zf_n (z)|^2) \to 0$ uniformly on $\Gamma_\zeta$. Now formula \eqref{logmod} and the fact that 
$\Re\log O_n = \log |O_n|$ finish the proof.
\qed

\begin{Lem}\label{lemIm}
Let $\mu \in \szc$, $\zeta\in \T$ be such that \eqref{eq23} holds. Consider the outer functions $O_n$, $n \ge 0$, from Lemma \ref{l1}. We have
\begin{equation}\label{eq7}
\lim_{n \to \infty} \sup_{z \in \Gamma_\zeta} |\Im \log O_{n}(z) | = 0.
\end{equation}
\end{Lem}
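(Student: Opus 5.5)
We follow the scheme of Lemma \ref{lemImBis}, but now estimate each $\alpha_n = \Im\log O_n$ individually rather than in Ces\`aro mean, using the stronger assumption \eqref{eq23} together with Lemma \ref{lemChain}. Put $z_k = (1-2^{-k})\zeta$. Since $O_n(0)>0$, we have $\alpha_n(z_0)=0$. Hence for $z\in\Gamma_\zeta$ with $2^{-j-1}\le 1-|z|<2^{-j}$ the telescoping bound reads
$$
|\alpha_n(z)| \le \sum_{k=0}^{j-1}|\alpha_n(z_{k+1})-\alpha_n(z_k)| + |\alpha_n(z)-\alpha_n(z_j)|.
$$
It therefore suffices to show
$$
|\alpha_n(z_{k+1})-\alpha_n(z_k)| \lesssim \sqrt{\K(\mu_n,z_k)} + |f_n(z_k)|,
$$
and the analogous bound for $|\alpha_n(z)-\alpha_n(z_j)|$, with absolute constants (the Stolz angle is fixed).

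\textbf{The key increment estimate.} The crude bound from the proof of Lemma \ref{lemImBis}, namely $\int|\log(1-|f_n|^2)|\,|\text{kernel difference}|\,dm \lesssim \K(\mu_n,z_k)-\log(1-|z_kf_n(z_k)|^2)$, is linear in $\K$. It is not summable under \eqref{eq23} alone, only under \eqref{eq22}, and it also does not become small uniformly in $k$ as $n\to\infty$ in the needed form. We expect this to be the main obstacle. What we would try first is to exploit cancellation: the difference of the Schwarz kernels
$$
\frac{1+\bar\xi z_{k+1}}{1-\bar\xi z_{k+1}}-\frac{1+\bar\xi z_k}{1-\bar\xi z_k}
$$
has zero mean against $dm$ (both have value $1$ at the origin after integration). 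So we may subtract a constant and write
$$
\alpha_n(z_{k+1})-\alpha_n(z_k) = \frac12\int_\T \bigl(\log(1-|f_n|^2) - c\bigr)\,\Im(\text{kernel difference})\,dm,
$$
with the choice $c=\log(1-|f_n(z_k)|^2)$. The kernel difference is bounded by a constant times the Poisson kernel $P_{z_k}$. Cauchy--Schwarz then reduces matters to bounding $\Pp\bigl(|\log(1-|f_n|^2)-\log(1-|f_n(z_k)|^2)|^2, z_k\bigr)$ by $\K(\mu_n,z_k)$, up to terms that are small once $|f_n(z_k)|$ is small. For this we would use the entropy identity \eqref{entropyschur-n}, which expresses $\K(\mu_n,z_k)$ as a Jensen gap for the concave function $\log$ applied to $1-|f_n|^2$. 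A Jensen gap controls the $L^2(P_{z_k})$-oscillation of $\log(1-|f_n|^2)$ via the standard inequality $e^{t}-1-t\gtrsim \min(t^2,|t|)$. Combined with the estimate $\Pp(|f_n-f_n(z_k)|,z_k)\lesssim\sqrt{\K(\mu_n,z_k)}$ from Theorem 2 of \cite{BD21}, this should give an increment of order $\sqrt{\K(\mu_n,z_k)}$ plus an error controlled by $|f_n(z_k)|$. If the $\min(t^2,|t|)$ regime causes trouble on the set where $|f_n|$ is close to $1$, we would handle that set separately by the linear bound, whose total $P_{z_k}$-mass is small.

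\textbf{Conclusion.} Summing the increments gives
$$
\sup_{z\in\Gamma_\zeta}|\alpha_n(z)| \lesssim \sum_{k\ge0}\sqrt{\K(\mu_n,z_k)} + \text{(error terms in } |f_n|\text{ on }\Gamma_\zeta).
$$
The first term tends to $0$ by exactly the argument in \eqref{eq4}: dominated tails from \eqref{eq23}, $\K(\mu_n,z_k)\le\K(\mu,z_k)$ by \eqref{decayentropy}, and pointwise convergence $\K(\mu_n,z)\to0$. The error terms tend to $0$ by Lemma \ref{lemChain}. This yields \eqref{eq7}.
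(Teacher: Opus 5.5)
\textbf{Gap: the stated reduction is not sufficient, and the Cauchy--Schwarz step fails.} Your skeleton is the paper's: telescoping from $z_0$, subtracting a constant (since $\int_\T \Im(\text{kernel difference})\,dm=0$), dominating the kernel difference by the Poisson kernel at $z_k$, and summing via \eqref{eq4}. However, the diagnosis behind your reduction is off.

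A term linear in $\K$ is harmless under \eqref{eq23}, because $\sum_k\K(\mu_n,z_k)\le\bigl(\sum_k\sqrt{\K(\mu_n,z_k)}\bigr)^2$; indeed \eqref{eq23} implies \eqref{eq22}. What makes the crude bound of Lemma \ref{lemImBis} useless for individual $n$ is the term $-\log(1-|z_kf_n(z_k)|^2)\asymp|f_n(z_k)|^2$. Under \eqref{eq23} the values $f_n(z_k)$ converge as $k\to\infty$ (the increments are summable, as in the proof of Lemma \ref{lemChain}) to a limit that is in general nonzero. So such terms do not sum in $k$.

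Your target bound $\sqrt{\K(\mu_n,z_k)}+|f_n(z_k)|$ reintroduces exactly this defect. Telescoping produces $\sum_{k<j(z)}|f_n(z_k)|$, which is unbounded on $\Gamma_\zeta$ whenever that limit is nonzero. Lemma \ref{lemChain} controls only $\sup_{\Gamma_\zeta}|f_n|$, not such sums. So the claim that ``the error terms tend to $0$ by Lemma \ref{lemChain}'' fails. The increment has to be bounded by something summable in $k$, i.e.\ purely in terms of $\K(\mu_n,z_k)$.

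The Cauchy--Schwarz step also cannot work as written. $\log(1-|f_n|^2)$ is only in $L^1$, so $\Pp\bigl(|\log(1-|f_n|^2)-c|^2,z_k\bigr)$ may be infinite, and the Jensen gap only controls $\Pp(\min(t^2,|t|),z_k)$.

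\textbf{The fix.} The correct step is an $L^1$ bound with no $f_n$-error at all; this is Lemma 3 of \cite{BD21}, which the paper uses with $\lambda=\Pp(\log(1-|f_n|^2),z_k)$. Put $g=1-|f_n|^2$ and $t=\log g-\log\Pp(g,z_k)$. Since $|z_kf_n(z_k)|^2\le|f_n(z_k)|^2\le\Pp(|f_n|^2,z_k)=1-\Pp(g,z_k)$, identity \eqref{entropyschur-n} splits $\K(\mu_n,z_k)$ into three nonnegative pieces. This gives both $\Pp(e^t-1-t,z_k)=\log\Pp(g,z_k)-\Pp(\log g,z_k)\le\K(\mu_n,z_k)$ and $0\le\log(1-|f_n(z_k)|^2)-\log\Pp(g,z_k)\le\K(\mu_n,z_k)$.

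Splitting at $|t|=1$ gives $\Pp(|t|,z_k)\lesssim\sqrt{\K(\mu_n,z_k)}+\K(\mu_n,z_k)$. Hence, also for your constant $c=\log(1-|f_n(z_k)|^2)$, you get $|\alpha_n(z_{k+1})-\alpha_n(z_k)|\lesssim\sqrt{\K(\mu_n,z_k)}+\K(\mu_n,z_k)$, and similarly for $|\alpha_n(z)-\alpha_n(z_{j(z)})|$. Summing and using \eqref{eq4} yields \eqref{eq7}.

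Your fallback remark points this way, but it is the core of the proof rather than a contingency. What matters on the set $\{|t|>1\}$ is that $\Pp(|t|\chi_{\{|t|>1\}},z_k)\lesssim\K(\mu_n,z_k)$, not that its mass is small.
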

\beginpf We use again notation $z_k = (1- 2^{-k}) \zeta$, $k \ge 0$. Let us first check that there exists a constant $c(\mu, \zeta)$ depending only on $\mu$, 
$\zeta$ such that  
\begin{equation}\label{eq6}
|\Im\log O_n(z_{k+1}) - \Im \log O_n(z_k)| \le c(\mu, \zeta) \sqrt{\K(\mu_n, z_k)}, 
\end{equation}
for every $k\ge 0$, $n\ge 0$. Take a constant $\lambda \in \R$. Since the kernel $\Im (1+\bar\xi  z)(1-\bar\xi z)^{-1}$ is odd, we have 
$$
\Im \log O_n(z) =  \Im \frac{1}{2}\int_{\T} (\log(1-|f_n|^2) - \lambda)\frac{1+\bar\xi  z}{1-\bar\xi z}\,dm(\xi), \qquad z \in \D. 
$$
Then  
\begin{align*}
& |\Im \log O_n(z_{k+1}) - \Im \log O_n(z_k)| \le \\ 
&\le \frac{1}{2}\int_{\T}|\log(1-|f_n(\xi)|^2) - \lambda| \left|\frac{1+\bar\xi  z_{k+1}}{1-\bar\xi z_{k+1}} - \frac{1+\bar\xi  z_{k}}{1-\bar\xi z_k} \right|\,dm(\xi) \\
& \lesssim \int_{\T}|\log(1-|f_n(\xi)|^2) - \lambda |\frac{1 - |z_{k}|^2}{|1-\bar\xi z_k|^2} \,dm(\xi) . 
\end{align*}
Now, for the special choice $\lambda = \Pp (\log (1-|f_n|^2), z_k)$, we have
\begin{equation}\label{eq125}
\int_{\T}|\log(1-|f_n|^2) - \lambda |\frac{1 - |z_{k}|^2}{|1-\bar\xi z_k|^2} \,dm \lesssim \max(\K(\mu_n, z_k), \sqrt{\K(\mu_n, z_k)}), 
\end{equation}
by Lemma 3 in \cite{BD21} (see also page 12 in \cite{BD21}). Since 
$$
\sup_{n,\,k \ge 0}\K(\mu_n, z_k) \le \sup_{z \in \Gamma_\zeta}\K(\mu, z) < \infty , 
$$ 
there is a constant $c(\mu, \zeta)$ such that the right hand side in \eqref{eq125} does not exceed $c(\mu, \zeta)\sqrt{\K(\mu_n, z_k)}$ for every $k\ge 0$, $n\ge 0$. So, \eqref{eq6} is proved. Next, given $z \in \Gamma_\zeta$, pick the positive integer $j(z)$ such that $2^{-j(z)-1} \le 1-|z| < 2^{-j(z)}$. Similarly to  \eqref{eq6}, we have
\begin{equation}\label{eq126}
|\Im\log O_n(z) - \Im \log O_n(z_{j(z)})| \le c(\mu, \zeta) \sqrt{\K(\mu_n, z_{j(z)})}. 
\end{equation}
Using \eqref{eq6}, \eqref{eq126}, and $\Im \log O_n (z_0) =0$, a telescopic sum argument gives
\begin{equation}\label{eq128}
|\Im \log O_n(z)| \le c(\mu, \zeta) \sum_{k = 0}^{j(\zeta)} \sqrt{\K(\mu_{n}, z_k)} \le c(\mu, \zeta) \sum_{k = 0}^\infty \sqrt{\K(\mu_{n}, z_k)}.
\end{equation}
Recall that $\lim_{n \to \infty }\sum_{k = 0}^\infty \sqrt{\K(\mu_{n}, z_k)} = 0$ under assumption \eqref{eq23}, see \eqref{eq4}. We now see that
\eqref{eq7} follows from \eqref{eq128}. \qed

\bigskip

\noindent{\bf Proof of Theorem \ref{thm3}.} The claim of the theorem follows immediately from Lemma \ref{lemChain}, Lemma \ref{lemRe}, and Lemma \ref{lemIm} using formula $\phi_n^* D_{\mu} = O_{n} / (1 - zb_n f_n)$ from Lemma \ref{l1}. \qed

\medskip

\section{Proof of Theorem \ref{thm4}}
Recall that we study the asymptotic behaviour of $k_{\mu, n}(z_1,z_2)$, $z_{1,2} \in \Gamma_{\zeta}$, by estimating the function $r_{n}$ 
defined by 
\begin{equation}\label{universality0-thm4-bis}
k_{\mu, n}(z_1, z_2) = \ov{D_{\mu}^{-1}(z_2)}D_{\mu}^{-1}(z_1) \frac{1 - \bar z_2^{n} z_1^{n}}{1 - \bar z_2 z_1}\bigl(1 + r_n(z_1, z_2)\bigr).
\end{equation}  
Given $\delta > 0$, $\Delta > 0$, define 
\begin{align}
\Gamma_{\zeta,n}(\delta, \Delta) &= \{z \in \Gamma_\zeta:\; \Delta \le |z| \le 1 - \delta/n\}, \label{eq871}\\
\Gamma'_{\zeta,n}(\delta) &= \{z \in \Gamma_\zeta:\; |z| > 1 - \delta/n\}, \label{eq872}\\
\Gamma''_{\zeta}(\Delta) &= \{z \in \Gamma_\zeta:\; |z| < \Delta\}. \label{eq873}
\end{align}
We first consider the ``diagonal'' case of Theorem \ref{thm4}. 
\begin{Lem}\label{l51}
Let $\mu \in \szc$ and $\zeta \in \T$ be such that $|D_\mu|$ has a non-zero finite non-tangential limit at $\zeta$. If \eqref{eq21} holds at $\zeta$, we have 
\begin{gather}
k_{\mu, n}(z,z) = |D^{-1}_\mu(z)|^{2}\frac{1 - |z|^{2n}}{1 - |z|^2}\bigl(1 + R_n(z)\bigr), \label{eq81} \\
\sup_{z \in \Gamma_{\zeta}}\bigl|R_n(z)\bigl| \to 0, \quad n \to \infty. \label{eq130}
\end{gather}
\end{Lem}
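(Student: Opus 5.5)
The plan is to avoid Theorem \ref{thm1} and work directly with the Bernstein--Szeg\H{o} type polynomials $\tilde\phi^*_{\lambda,n}$ from \eqref{eq44star}, taking $\lambda = z$, the very point at which we evaluate the kernel. I would split $\Gamma_\zeta$ into the three regions \eqref{eq871}--\eqref{eq873}: $\Gamma''_\zeta(\Delta)$, $\Gamma_{\zeta,n}(\delta,\Delta)$ and $\Gamma'_{\zeta,n}(\delta)$. Here $\Delta<1$ is chosen close to $1$ so that $\K(\mu,z)$ is tiny on $\Gamma_\zeta\setminus\Gamma''_\zeta(\Delta)$, which is possible by \eqref{eq21}; $\delta$ is a small parameter.

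On the compact piece $\Gamma''_\zeta(\Delta)$ the argument is classical. By \eqref{szthm}, $\phi^*_n D_\mu\to 1$ uniformly there. Also $|\phi_n(z)|\le |z|^n|\phi_n^*(z)|\to 0$ uniformly, because $b_n$ is a Blaschke product vanishing to order at least $n$ at $0$, hence $|b_n(z)|\le |z|^n$. Inserting these into \eqref{reprokernel} with $z_1=z_2=z$ gives \eqref{eq81} with $\sup_{|z|\le\Delta}|R_n|\to0$.

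For $z\in\Gamma_\zeta\setminus\Gamma''_\zeta(\Delta)$ the key observation is that the kernel $k_{\mu,n}(z,z)$ depends only on the moments of $\mu$ of order $<n$, equivalently on $f_0(0),\dots,f_{n-2}(0)$, i.e.\ on $\phi_0,\dots,\phi_{n-1}$. The measure $\nu_{z,n}=dm/|\tilde\phi^*_{z,n}|^2$ has the same first $n-1$ Verblunsky coefficients. Consequently its orthogonal polynomials of degree $<n$ coincide with $\phi_k$, and $\tilde\phi_{z,n}$ is its $n$-th orthonormal polynomial. Hence, writing $\tilde b = \tilde\phi_{z,n}/\tilde\phi^*_{z,n}$,
\begin{equation*}
k_{\mu,n}(z,z)=k_{\nu_{z,n},n}(z,z)=\frac{|\tilde\phi^*_{z,n}(z)|^2-|\tilde\phi_{z,n}(z)|^2}{1-|z|^2}=|\tilde\phi^*_{z,n}(z)|^2\,\frac{1-|\tilde b(z)|^2}{1-|z|^2}.
\end{equation*}
Two factors must now be controlled. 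First, I claim that the Schur function of $\nu_{z,n}$ (its Schur iterates are $f_0,\dots,f_{n-2}$ followed by the constant $f_{n-1}(z)$) takes the same value as $f$ at $z$. Therefore $\Pp(\nu_{z,n},z)=\Pp(\mu,z)$ by \eqref{eq55}. Since $\log|\tilde\phi^*_{z,n}(z)|^{-2}=\Pp(\log w_{\nu},z)$, this gives
\begin{equation*}
\log|\tilde\phi^*_{z,n}(z)D_\mu(z)|^2=\K(\nu_{z,n},z)-\K(\mu,z)\in[-\K(\mu,z),0]
\end{equation*}
by \eqref{eq66}, so $|\tilde\phi^*_{z,n}(z)D_\mu(z)|^2=1+O(\K(\mu,z))$ uniformly. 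Second, by \eqref{eq66bis} the hypothesis \eqref{eq66bisbis} holds with $\lambda=z$, so Lemma \ref{l50} gives $\bigl||\tilde b(z)|-|z|^n\bigr|\le\eps$. On $\Gamma_{\zeta,n}(\delta,\Delta)$ we have $1-|z|^{2n}\ge c(\delta)>0$, so this additive error becomes a relative error $O(\eps/c(\delta))$ in $1-|\tilde b(z)|^2$ versus $1-|z|^{2n}$. Together the two estimates yield $\sup|R_n|\to0$ on this region for each fixed $\delta$.

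The main obstacle is the thin region $\Gamma'_{\zeta,n}(\delta)$. There $1-|z|^{2n}\asymp n(1-|z|)$ can be arbitrarily small relative to $1$, so the additive bound from Lemma \ref{l50} is useless. The first thing I would try is to apply the same identity at the fixed point $\lambda_n=(1-\delta/n)\zeta$ rather than at $z$. The function $g(z)=(1-|z|^2)k_{\mu,n}(z,z)$ equals $|\tilde\phi^*_{\lambda_n,n}(z)|^2(1-|\tilde b_{\lambda_n}(z)|^2)$, and this representation is valid for every $z$ because the moment-matching argument does not use $\lambda=z$. By Lemma \ref{domain-bound}, $\tilde\phi^*_{\lambda_n,n}$ has small distortion on $B(\zeta,1/n)$ (rescaled by $\delta$), and $|D_\mu|$ is nearly constant there by the non-tangential limit assumption. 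It then remains to compare $1-|\tilde b(z)|^2$ with $1-|z|^{2n}$ with relative precision. For this I would use that $\tilde b/(\alpha z^n)$ is analytic and zero-free on $B(\zeta,8/n)$, since $\tilde\phi^*$ has no zeros there by Lemma \ref{lemNearCircAdd}. The estimate \eqref{eq115bis}, together with its reflected version for $\tilde\phi_{\lambda_n,n}$, would show that this quotient is $1+O(\sqrt{\K(\mu,\lambda_n)})$ near $\zeta$ with controlled derivative. That derivative control is exactly what gives the relative estimate for $1-|\tilde b|^2$ when $|z|\to1$. Finally, I would let $n\to\infty$ first and then $\delta\to0$ and $\Delta\to1$.
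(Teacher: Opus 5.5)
Your proposal is correct in substance, but it follows a different route from the paper.

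\textbf{How the paper argues.} The paper handles $|z|\le\Delta_n$ by Szeg\H{o}'s theorem. It delegates the band $1-|z|\le\delta_n/n$ (with $\delta_n\to\infty$ slowly) to Theorem~1.1 of \cite{Bes24}, combined with the M\'at\'e--Nevai--Totik relation \eqref{eq32} obtained from Theorem~\ref{thm1}. In the middle region it uses an elementary two-sided bound. The upper bound $k_{\mu,n}(z,z)\le|D_\mu(z)|^{-2}/(1-|z|^2)$ comes from \eqref{eq146}. The lower bound $k_{\mu,n}(z,z)\ge(1+o(1))/((1-|z|^2)\Pp(\mu,z))$ comes from the extremal property tested on $p(\xi)=(1-\bar z^n\xi^n)/(1-\bar z\xi)$. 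This only works where $|z|^n\to0$.

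\textbf{How your argument differs.} You use the exact identity $(1-|z|^2)k_{\mu,n}(z,z)=|\tilde\phi^*_{\lambda,n}(z)|^2(1-|\tilde b_\lambda(z)|^2)$, with $\tilde b_\lambda=\tilde\phi_{\lambda,n}/\tilde\phi^*_{\lambda,n}$. It holds for every $\lambda$, since $|\phi_n^*|^2-|\phi_n|^2=|\phi_{n-1}^*|^2-|z\phi_{n-1}|^2$ does not depend on the last Verblunsky coefficient. You combine it with the entropy identity $\log|\tilde\phi^*_{\lambda,n}(\lambda)D_\mu(\lambda)|^2=\K(\nu_{\lambda,n},\lambda)-\K(\mu,\lambda)$. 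Your claim that the Schur function of $\nu_{\lambda,n}$ agrees with $f$ at $\lambda$ is right. It should be phrased via the Schur \emph{parameters} $f_0(0),\dots,f_{n-2}(0),f_{n-1}(\lambda),0,\dots$ and the inverse Schur steps evaluated at $\lambda$; the iterates themselves are not $f_0,\dots,f_{n-2}$.

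With Lemma~\ref{l50} this settles $\Delta\le|z|\le1-\delta/n$ for fixed $\delta$. Your plan for the band also works. Write $\tilde\phi^*=\tilde\phi^*_{\lambda_n,n}$ and $F=\tilde b/(\alpha z^n)=\overline{\tilde\phi^*(1/\bar z)/\tilde\phi^*(\lambda_n)}\big/\bigl(\tilde\phi^*(z)/\tilde\phi^*(\lambda_n)\bigr)$. You can keep $\lambda_n=(1-1/n)\zeta$ and take $\delta\le 1/4$ rather than rescale. Lemma~\ref{domain-bound} then gives $|F-1|\lesssim\sqrt{\K(\mu,\lambda_n)}$ on a disk of radius $\asymp1/n$ about $\zeta$, and $|F|=1$ on $\T$. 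Cauchy's estimate yields $\bigl||F(z)|-1\bigr|\lesssim n(1-|z|)\sqrt{\K(\mu,\lambda_n)}$, while $1-|z|^{2n}\gtrsim n(1-|z|)$ there. This is exactly the relative precision you need.

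\textbf{What each route buys.} Yours is self-contained within the paper's toolkit: it avoids Theorem~\ref{thm1} and \cite{Bes24}, and effectively reproves diagonal universality at scale $1/n$. The paper's route is shorter and more elementary in the middle region, at the cost of importing the local result near $\zeta$.

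\textbf{Two slips to fix.} First, on the compact piece, $|b_n(z)|\le|z|^n$ is false: $b_n(0)=\phi_n(0)/\phi_n^*(0)$ is generally non-zero (e.g.\ $b_1(0)=-\bar a_0$). The fact you need, $\phi_n\to0$ locally uniformly in $\D$, is still true. For example, the series \eqref{eq146} with $z_1=z_2=z$ has a continuous sum, so it converges locally uniformly by Dini's theorem. Second, $\delta$ should stay fixed: let $n\to\infty$ and then $\Delta\to1$. Sending $\delta\to0$ before $\Delta\to1$ would ruin your middle-region bound, whose error is of order $\eps(\Delta)/(1-e^{-2\delta})$.
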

\beginpf It follows from general Szeg\H{o} theory (see \eqref{szthm} and \eqref{reprokernel}) that for every $\mu$ in $\szc$ we have 
$$
k_{\mu, n}(z_1, z_2) \to \frac{D_{\mu}(z_1)^{-1}\ov{D_{\mu}(z_2)^{-1}}}{1 - \bar z_1 z_2}, \quad n \to \infty , 
$$ 
on compact subsets of $\D \times \D$. Hence, there exists an increasing sequence $\Delta_n \to 1$ such that 
$$
\sup_{z \in \Gamma''_{\zeta}(\Delta_n)}\bigl|R_n(z)\bigl| \to 0, \quad n \to \infty,
$$
for the functions $R_n$ defined by \eqref{eq81}. We can also choose a slowly increasing sequence $\delta_n \to \infty$, $\delta_n = o(n)$, such that 
$$
\sup_{z \in \Gamma'_{\zeta,n}(\delta_n)}\bigl|R_n(z)\bigl| \to 0, \quad n \to \infty.
$$
Indeed, the existence of such a sequence is a consequence of Theorem 1.1 in \cite{Bes24} and M\'at\'e, Nevai, and Totik asymptotic relation \eqref{eq32}. Note that \eqref{eq32} holds under the assumptions of Theorem \ref{thm4} by Theorem \ref{thm1}. Thus, to prove \eqref{eq130} we need to check that $\sup_{z \in \Gamma_{\zeta,n}(\delta_n, \Delta_n)}|R_n(z)| \to 0$ as $n \to \infty$. Moreover, since $\sup_{z \in \Gamma_{\zeta,n}(\delta_n, \Delta_n)} |z|^{2n} \to 0$ as $n \to \infty$, we actually need to prove that
\begin{equation}\label{eq777}
\sup_{z \in \Gamma_{\zeta,n}(\delta_n, \Delta_n)}\bigl|R^*_n(z)\bigl| \to 0, \quad n \to \infty,
\end{equation}
for functions $R^*_n$ defined by
$$
k_{\mu, n}(z,z) = \frac{|D^{-1}_\mu(z)|^{2}}{1 - |z|^2}\bigl(1 + R_n^*(z)\bigr).
$$
A computation of Fourier coefficients similar to \eqref{eq144} leads to the well-known formulas
\begin{align}
k_{\mu, n}(z_1,z_2) &= \sum_{k=0}^{n-1} \phi_k (z_1)\ov{\phi_k (z_2)}, \label{eq145}\\
\frac{D_\mu^{-1}(z_1)\ov{D_\mu^{-1}(z_2)}}{1 - \bar z_1 z_2} &= \sum_{k=0}^{\infty} \phi_k (z_1)\ov{\phi_k (z_2)}, \label{eq146}
\end{align}
for $z_1, z_2 \in \D$. In particular, 
\begin{equation}\label{eq129}
k_{\mu, n}(z,z) \le \frac{|D^{-1}_\mu(z)|^{2}}{1 - |z|^2}, \qquad z \in \D,
\end{equation}
therefore, we just need to estimate $k_{\mu, n}(z,z)$ from below. Let $\mathcal{P}_n$ denote the $n$-dimensional space of polynomials of degree at most $n-1$. Since $k_{\mu , n}$ is the reproducing kernel, we have 
\begin{align*}
k_{\mu , n}(z,z) 
&= \|k_{\mu , n}(\cdot, z)\|_{L^2(\mu)}^{2} \\
&= \sup\{|(p, k_{\mu , n}(\cdot, z))_{L^2(\mu)}|^2,\;\; p \in \mathcal{P}_n,\; \|p\|_{L^2(\mu)} \le 1\}\\
&= \sup\{|p(z)|^2,\;\; p \in \mathcal{P}_n,\; \|p\|_{L^2(\mu)} \le 1\}\\
&= \sup\left\{\frac{|p(z)|^2}{\|p\|_{L^2(\mu)}^2},\;\; p \in \mathcal{P}_n\right\}.
\end{align*}
Substituting $p(\xi) = \frac{1- \bar z^{n} \xi^n}{1 - \bar z \xi}$ into the last supremum, we get
$$
k_{\mu , n}(z,z) \ge \left(\frac{1 - |z|^{2n}}{1-|z|^2}\right)^2 \cdot \left(\int_{\T}\left|\frac{1- \bar z^{n} \xi^n}{1 - \bar z \xi}\right|^2\,d\mu(\xi)\right)^{-1}.
$$
Using $|z|^n \to 0$ and $|z| \to 1$ for $z \in \Gamma_{\zeta, n}(\delta_n, \Delta_n)$ when $n \to \infty$, we obtain
\begin{align}
k_{\mu , n}(z,z) 
&\ge \left(\frac{1}{1-|z|^2}\right)^2 \cdot \left(\int_{\T}\frac{1}{|1 - \bar z \xi|^2}\,d\mu(\xi)\right)^{-1}\cdot (1 +o(1)) \notag\\
&= \frac{1}{1-|z|^2} \cdot \left(\int_{\T}\frac{1-|z|^2}{|1 - \bar z \xi|^2}\,d\mu(\xi)\right)^{-1}\cdot (1 +o(1)) \notag\\
&= \frac{1}{1-|z|^2} \cdot \Pp(\mu, z)^{-1}\cdot(1 +o(1)) \notag\\
&= \frac{1}{1-|z|^2} \cdot |D_{\mu}(z)|^{-2}\cdot(1 +o(1)). \label{eq130-2}
\end{align}
In the last line we have used the fact that $\Pp(\mu, z) |D_{\mu}(z)|^{-2} = e^{\K(\mu, z)}$, $ z\in \D$, because $D_\mu$ is an outer function. In particular, we have $\Pp(\mu, z) = |D_{\mu}(z)|^2(1 + o(1))$ in $\Gamma_{\zeta, n}(\delta_n, \Delta_n)$ provided \eqref{eq21} holds. 
Combining \eqref{eq129}, \eqref{eq130-2}, we obtain \eqref{eq777} and complete the proof. \qed

\medskip

\begin{Lem}\label{7p2}
Let $\mu \in \szc$ and $\zeta \in \T$ be such that $D_\mu$ has a non-zero finite non-tangential limit at $\zeta$. If \eqref{eq21} holds at $\zeta$, then for every $A\ge 0$ we have 
$$
\lim_{n \to \infty} \sup_{\substack{z_1, z_2 \in \Gamma_{\zeta},\\ d_H(z_1, z_2) \le A}}\bigl|r_n(z_1, z_2)\bigl| = 0$$
for the functions $r_n$ defined in \eqref{universality0-thm4-bis}.
\end{Lem}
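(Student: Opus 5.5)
The plan is to upgrade the diagonal asymptotics of Lemma \ref{l51} to the off-diagonal regime $d_H(z_1,z_2)\le A$ using the reproducing kernel Hilbert space structure. Set $K_n(z_1,z_2)=\ov{D_{\mu}^{-1}(z_2)}D_{\mu}^{-1}(z_1)\frac{1-\bar z_2^n z_1^n}{1-\bar z_2 z_1}$ for the model kernel. This is the reproducing kernel (up to the outer factors) of the weighted model space. The aim is to show that the normalized difference
$$
\frac{|k_{\mu,n}(z_1,z_2)-K_n(z_1,z_2)|}{\sqrt{K_n(z_1,z_1)K_n(z_2,z_2)}}
$$
tends to zero uniformly, and then that $|K_n(z_1,z_2)|\gtrsim\sqrt{K_n(z_1,z_1)K_n(z_2,z_2)}$ when $d_H(z_1,z_2)\le A$. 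Dividing gives $r_n\to0$.

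For the first step I would use a Hilbert space argument. Consider the polynomial $p_{z_2}(\xi)=\ov{D_\mu^{-1}(z_2)}\frac{1-\bar z_2^n\xi^n}{1-\bar z_2\xi}\in\mathcal P_n$. Its inner product with $k_{\mu,n}(\cdot,z_1)$ equals $\ov{p_{z_2}(z_1)}$, which is essentially $\ov{K_n(z_2,z_1)}$. Then
$$
\|k_{\mu,n}(\cdot,z_1)-c\,p_{z_1}\|^2_{L^2(\mu)}=k_{\mu,n}(z_1,z_1)-2\Re\bigl(\bar c\,p_{z_1}(z_1)\bigr)+|c|^2\|p_{z_1}\|^2_{L^2(\mu)}.
$$
Lemma \ref{l51} gives $k_{\mu,n}(z_1,z_1)=K_n(z_1,z_1)(1+o(1))$. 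In the bulk region $\Gamma_{\zeta,n}(\delta_n,\Delta_n)$, the computation in its proof gives $\|p_{z_1}\|^2_{L^2(\mu)}=K_n(z_1,z_1)(1+o(1))$, using $\Pp(\mu,z)=|D_\mu(z)|^2e^{\K(\mu,z)}$ and \eqref{eq21}. Choosing $c=1$ yields $\|k_{\mu,n}(\cdot,z_1)-p_{z_1}\|=o(\sqrt{K_n(z_1,z_1)})$. Pairing with $k_{\mu,n}(\cdot,z_2)$ and applying Cauchy--Schwarz then gives $|k_{\mu,n}(z_1,z_2)-p_{z_1}(z_2)|=o\bigl(\sqrt{K_n(z_1,z_1)K_n(z_2,z_2)}\bigr)$, where $p_{z_1}(z_2)$ is the conjugate-symmetric model expression. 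The remaining regions need separate treatment:
\begin{itemize}
\item For $|z_{1,2}|\le\Delta_n$, the claim follows from Szeg\H{o}'s compact-convergence of kernels.
\item For points within $\delta_n/n$ of $\zeta$, I would use the known local universality \eqref{universality0}--\eqref{universality} (Theorem 1.1 in \cite{Bes24}), replacing $D_\mu^{-1}(\zeta)$ by $D_\mu^{-1}(z_j)$. This replacement is legitimate because $D_\mu$ has a non-zero non-tangential limit at $\zeta$; this is where the limit of $D_\mu$ itself, not just of $|D_\mu|$, is needed, since the phase of $D_\mu$ enters $r_n$.
\item Mixed pairs with $d_H\le A$ have comparable $1-|z_j|$. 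They therefore fall in overlapping transition zones, which can be absorbed by enlarging $\delta_n$ or using the Hilbert space argument with $\|p_z\|^2$ estimated there too.
\end{itemize}

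The main obstacle is the second step, the lower bound $|1-\bar z_2^nz_1^n|/|1-\bar z_2z_1|\gtrsim\sqrt{(1-|z_1|^{2n})(1-|z_2|^{2n})}/\sqrt{(1-|z_1|^2)(1-|z_2|^2)}$ for $z_1,z_2$ in the Stolz angle with $d_H\le A$. In the bulk this is easy: $|z_j|^n\to0$ and $|1-\bar z_2z_1|\asymp 1-|z_1|\asymp1-|z_2|$ by hyperbolic closeness. Near $\zeta$ at scale $1/n$, however, $1-\bar z_2^nz_1^n$ could in principle vanish. I would check that inside the Stolz angle with $d_H\le A$ the arguments of $z_1,z_2$ differ by $O(1-|z|)$, so $\bar z_2^nz_1^n$ has modulus strictly less than $1-c$ or argument bounded away from $2\pi\Z\setminus\{0\}$. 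The local universality of \cite{Bes24} is stated as a multiplicative $(1+r_n)$ relation on $1/n$-disks, so it already handles this region directly.
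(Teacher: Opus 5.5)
The architecture is sound: Szeg\H{o} on compacts, Theorem~1.1 of \cite{Bes24} within $\delta_n/n$ of $\zeta$, a Cauchy--Schwarz reduction to the diagonal Lemma~\ref{l51} in between, and $|1-\bar z_2z_1|\asymp\sqrt{(1-|z_1|^2)(1-|z_2|^2)}$ when $d_H(z_1,z_2)\le A$. However, the bulk step is off by a factor of $D_\mu$ and, as written, fails. Put $g_z(\xi)=\frac{1-\bar z^n\xi^n}{1-\bar z\xi}$, so $p_z=\ov{D_\mu^{-1}(z)}\,g_z$. In the bulk, the computation in the proof of Lemma~\ref{l51} gives
$$
\|p_z\|^2_{L^2(\mu)}=|D_\mu(z)|^{-2}\,\frac{\Pp(\mu,z)}{1-|z|^2}\,(1+o(1))=\frac{e^{\K(\mu,z)}}{1-|z|^2}\,(1+o(1))=|D_\mu(z)|^2K_n(z,z)\,(1+o(1)).
$$
Also $p_z(z)=\ov{D_\mu^{-1}(z)}\frac{1-|z|^{2n}}{1-|z|^2}\ne K_n(z,z)$. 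With $c=1$ you therefore get $\|k_{\mu,n}(\cdot,z_1)-p_{z_1}\|^2\approx|D_\mu^{-1}(z_1)-1|^2/(1-|z_1|^2)$. This is not $o(K_n(z_1,z_1))$ unless $D_\mu(\zeta)=1$.

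The right multiple is $c=D_\mu^{-1}(z_1)$, i.e.\ the test polynomial $|D_\mu(z_1)|^{-2}g_{z_1}$. Then Lemma~\ref{l51} and \eqref{eq21} do give $\|k_{\mu,n}(\cdot,z_1)-|D_\mu(z_1)|^{-2}g_{z_1}\|^2=o(K_n(z_1,z_1))$, and Cauchy--Schwarz gives $k_{\mu,n}(z_2,z_1)=|D_\mu(z_1)|^{-2}g_{z_1}(z_2)+o\bigl(\sqrt{K_n(z_1,z_1)K_n(z_2,z_2)}\bigr)$. This is still not the model, since $K_n(z_2,z_1)=\frac{D_\mu(z_1)}{D_\mu(z_2)}\,|D_\mu(z_1)|^{-2}g_{z_1}(z_2)$. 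You need the extra step $D_\mu(z_1)/D_\mu(z_2)\to1$ uniformly over bulk pairs. It holds because these points tend to $\zeta$ inside $\Gamma_\zeta$ and $D_\mu$ has a finite non-zero non-tangential limit there. So in your route the phase of $D_\mu$ is needed in the bulk as well, not only in the $\delta_n/n$-region.

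Once repaired, your argument is a legitimate alternative, but the paper's is simpler because it uses no test function. By the exact expansion \eqref{eq146} of the reproducing kernel of the closure of polynomials,
$$
\frac{\ov{D_\mu^{-1}(z_2)}D_\mu^{-1}(z_1)}{1-\bar z_2z_1}-k_{\mu,n}(z_1,z_2)=\sum_{k\ge n}\phi_k(z_1)\ov{\phi_k(z_2)}.
$$
Lemma~\ref{l51} gives $\sum_{k\ge n}|\phi_k(z)|^2=\frac{|D_\mu^{-1}(z)|^2}{1-|z|^2}\tilde R_n(z)$ with $\tilde R_n\to0$ uniformly on $\{z\in\Gamma_\zeta:|z|\le1-\delta_n/n\}$. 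Cauchy--Schwarz on the tail then yields $|r_n^*|\lesssim\sqrt{|\tilde R_n(z_1)\tilde R_n(z_2)|}$ on that whole region at once. Because the comparison kernel there is exact, $D_\mu$ is never frozen, and no separate compact region is needed. Your variational bound needs $\K(\mu,z)$ small, hence your extra Szeg\H{o} step. In the paper the phase of $D_\mu$ enters only when transferring \cite{Bes24} near $\zeta$.

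The lower bound you single out as the main obstacle is a one-liner in the paper; in $\D$, $1-\bar z_2^nz_1^n$ never vanishes anyway. Your remark on mixed pairs is to the point. Since $d_H(z_1,z_2)\le A$ forces $1-|z_1|\asymp 1-|z_2|$, a pair not inside $\{1-|z|<\delta_n/n\}$ lies inside $\{1-|z|\ge c_A\delta_n/n\}$. There $|z_j|^n\le e^{-c_A\delta_n}\to0$, so the bulk argument still applies; the paper uses this tacitly.
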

\beginpf Let $\Gamma'_{\zeta, n}(\delta)$ be defined by \eqref{eq872}, and let $\Gamma_{\zeta, n}(\delta) = \Gamma_{\zeta}(\delta) \setminus \Gamma'_{\zeta, n}(\delta)$. As in the proof of Lemma  \ref{l51}, it follows from Theorem 1.1 in \cite{Bes24} that there exists a sequence $\delta_n \to \infty$, $\delta_n = o(n)$,  such that
$$
\sup_{z_1, z_2 \in \Gamma'_{\zeta, n}(\delta_n)}\bigl|r_n(z_1, z_2)\bigl| \to 0, \quad n \to \infty,
$$
and we only need to check that
$$
\sup_{\substack{z_1, z_2 \in \Gamma_{\zeta, n}(\delta_n),\\ d_H(z_1, z_2) \le A}}\bigl|r^*_n(z_1, z_2)\bigl| \to 0, \quad n \to \infty, 
$$
for $r^*_n$ defined by 
$$
k_{\mu, n}(z_1, z_2) =  \frac{\ov{D_{\mu}^{-1}(z_2)}D_{\mu}^{-1}(z_1)}{1 - \bar z_2 z_1}\bigl(1 + r^*_n(z_1, z_2)\bigr).
$$
For this we will use formulas \eqref{eq145}, \eqref{eq146}. Cauchy-Schwarz inequality gives 
\begin{equation}\label{eq76}
\left|\frac{\ov{D_{\mu}^{-1}(z_2)}D_{\mu}^{-1}(z_1)}{1 - \bar z_2 z_1} - k_{\mu, n}(z_1, z_2) \right|^2
\le \big( \sum_{k = n}^{\infty}|\phi_k(z_1)|^2 \big) \cdot \big( \sum_{k = n}^{\infty}|\phi_k(z_2)|^2 \big). 
\end{equation}
By Lemma \ref{l51}, for every $z \in \Gamma_{\zeta, n}(\delta_n)$ we have 
$$
\sum_{k = n}^{\infty}|\phi_k(z)|^2  = \frac{|D_{\mu}^{-1}(z)|^2}{1 - |z|^2} - k_{\mu, n}(z, z) = \frac{|D_{\mu}^{-1}(z)|^2}{1 - |z|^2}\tilde R_n(z),
$$
for some $\tilde R_n$ such that $\sup_{z \in \Gamma_{\zeta, n}(\delta_n)}|\tilde R_n(z)| \to 0$ as $n \to \infty$. Now \eqref{eq76} gives 
\begin{align*}
\left|\frac{\ov{D_{\mu}^{-1}(z_2)}D_{\mu}^{-1}(z_1)}{1 - \bar z_2 z_1} - k_{\mu, n}(z_1, z_2)\right| 
&= \frac{|D_{\mu}^{-1}(z_1)D_{\mu}^{-1}(z_2)|}{\sqrt{1-|z_1|^2}\sqrt{1-|z_2|^2}}\sqrt{|\tilde R_n(z_1) \tilde R_n(z_2)|},\\
&\asymp  \left|\frac{\ov{D_{\mu}^{-1}(z_2)}D_{\mu}^{-1}(z_1)}{1 - \bar z_2 z_1}\right|\sqrt{|\tilde R_n(z_1) \tilde R_n(z_2)|},
\end{align*}
where we used 
$$
\sqrt{1-|z_1|^2}\sqrt{1-|z_2|^2} \asymp |1 - \bar z_2 z_1|
$$
for $z_1, z_2 \in \Gamma_{\zeta}$ such that $d_{H}(z_1, z_2) \lesssim 1$.  In particular, we have 
$$
|r^*_n(z_1, z_2)| \lesssim \sqrt{|\tilde R_n(z_1) \tilde R_n(z_2)|},
$$ 
and the result follows from Lemma \ref{l51}. \qed

\medskip

\begin{Lem}\label{7p3}
Let $\mu \in \szc$, $b_n = \phi_n/\phi_n^*$ and $f_n$ be the Schur functions of $\mu$ defined in \eqref{eq41}. Then for every $n\ge 1$ we have
\begin{equation}\label{eq132}
|b_n(z)| \lesssim |f_{n-1}(z)| + |f_{n-1}(0)| + |z|^n + \sqrt{e^{\K(\mu, z)} - 1},
\end{equation}
provided $\max \{|f_{n-1}(z)| , |f_{n-1}(0)|\} \leq 1/2$. In particular, if \eqref{eq21} holds, we have 
\begin{equation}\label{eq133}
\lim_{n\to \infty}\sup_{z \in \Gamma_{\zeta, n}(\delta_n)}\frac{1}{n}\sum_{k=0}^{n-1} |b_k(z)| = 0,
\end{equation}
for every $\delta_n \to \infty$, $\delta_n = o(n)$.
\end{Lem}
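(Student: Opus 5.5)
Write $\lambda = z$ and compare $b_n$ with the Blaschke product $\tilde b_{z,n} = \tilde\phi_{z,n}/\tilde\phi^*_{z,n}$ built from the modified polynomials \eqref{eq44}, \eqref{eq44star}. Lemma~\ref{l50} suggests $\tilde b_{z,n}(z)\approx \alpha z^n$, and \eqref{eq66bis} makes this quantitative at the single point $z$. By \eqref{eq66bis} applied at $\lambda = z$,
$$
\Pp\Bigl(\Bigl|\tfrac{\tilde\phi^*_{z,n}(z)}{\tilde\phi^*_{z,n}} - 1\Bigr|^2, z\Bigr) \le e^{\K(\mu,z)}-1 =: \eta.
$$
On $\T$ we have $\tilde b_{z,n}(\xi) = \xi^n \ov{\tilde\phi^*_{z,n}(\xi)}/\tilde\phi^*_{z,n}(\xi)$. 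Set $g = \tilde\phi^*_{z,n}(z)/\tilde\phi^*_{z,n}$, which is analytic and zero-free in $\D$, with $g(z)=1$. Then $\tilde b_{z,n}(\xi) = \xi^n \ov{\tilde\phi^*_{z,n}(z)}\,\tilde\phi^*_{z,n}(z)^{-1}\cdot g(\xi)/\ov{g(\xi)}$, so $|\tilde b_{z,n}(\xi) - \alpha\xi^n| = |g(\xi)-\ov{g(\xi)}|/|g(\xi)|$ with $|\alpha|=1$. It is cleaner to avoid dividing by $|g|$: write $\tilde b_{z,n}\ov{g}\,\ov{\tilde\phi^*_{z,n}(z)}^{-1}\tilde\phi^*_{z,n}(z) = \xi^n g$. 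Since $\tilde b_{z,n}$ is bounded by one, $|\tilde b_{z,n}(\xi) - \alpha\xi^n| \le |g(\xi)-1| + |\ov{g(\xi)}-1|$ almost everywhere, using that $\tilde b_{z,n}\ov g = \alpha \xi^n g$ on $\T$.

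Integrating against the Poisson kernel at $z$ and applying Cauchy--Schwarz gives
$$
|\tilde b_{z,n}(z) - \alpha z^n| \le \Pp(|\tilde b_{z,n}-\alpha\xi^n|,z) \le 2\sqrt{\eta}.
$$
Hence $|\tilde b_{z,n}(z)| \le |z|^n + 2\sqrt{e^{\K(\mu,z)}-1}$.

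Next relate $b_n$ to $\tilde b_{z,n}$. From \eqref{eqreq} and \eqref{eq44}, both $\phi_n$ and $\tilde\phi_{z,n}$ are obtained from $(\phi_{n-1},\phi^*_{n-1})$ by the same Möbius-type step with parameters $f_{n-1}(0)$ and $f_{n-1}(z)$ respectively. Explicitly, with $\beta=b_{n-1}(z)$ and $a\in\{f_{n-1}(0), f_{n-1}(z)\}$, the ratio has the form $\bigl(z\beta - \bar a\bigr)/\bigl(1 - z a\beta\bigr)$. Since $|z\beta|\le 1$ and $|a|\le 1/2$, the map is Lipschitz in $a$ with a bounded constant. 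It follows that $|b_n(z) - \tilde b_{z,n}(z)| \lesssim |f_{n-1}(z)| + |f_{n-1}(0)|$, which together with the previous bound gives \eqref{eq132}. The main point to check is this Lipschitz bound, namely that the denominators stay bounded below by $1/2$, which holds because $|a|\le1/2$.

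For \eqref{eq133}, split the indices into $I_{n,s}$ and $I_{n,b}$ as in \eqref{eqIns}, \eqref{eqInb}, at the points $z$ and $0$, with threshold $1/2$. The bad indices have density $\lesssim \etat_n(z)+\etat_n(0)$, which tends to zero uniformly on $\Gamma_{\zeta,n}(\delta_n)\subset\Gamma_{\zeta,n}$ by Lemma~\ref{l32}; there $|b_k|\le 1$. On the good indices, the averages of $|f_{k-1}(z)|$ and $|f_{k-1}(0)|$ are again $\lesssim \etat_n$. The average of $|z|^k$ is at most $\frac{1}{n(1-|z|)} \le 1/\delta_n \to 0$. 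The term $\sqrt{e^{\K(\mu,z)}-1}$ tends to zero uniformly on $\Gamma_{\zeta,n}(\delta_n)$ away from a compact set by \eqref{eq21}. On compacts it is fixed but bounded, and there one instead uses $b_k \to 0$ uniformly, since $b_k(z)\approx z\,b_{k-1}$ and $|z|^k\to0$. I expect the uniform control of this last term near $|z|\le\Delta$ to be the delicate point. I would handle it by the same Lemma~\ref{l31}-type argument applied to $\mu_\ell$ as in Lemma~\ref{lemImBis}.
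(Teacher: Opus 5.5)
Your proposal is correct and follows the paper's proof. You use the M\"obius comparison $|b_n(z)-\tilde b_{z,n}(z)|\lesssim|f_{n-1}(z)|+|f_{n-1}(0)|$ and the entropy bound \eqref{eq66bis}, giving $|\tilde b_{z,n}(z)-\alpha z^n|\lesssim\sqrt{e^{\K(\mu,z)}-1}$. You derive this directly from $\tilde b_{z,n}\bar g=\alpha\xi^n g$ on $\T$ rather than citing Lemma~2.5 of \cite{Bes24}. You then average with Lemma~\ref{l32} and $\frac{1}{n}\sum_k|z|^k\le 1/\delta_n$ on $\Gamma_{\zeta,n}(\delta_n)$ away from compacts, and use uniform decay of $b_k$ on compacts. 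Unlike the paper, you average over all $k<n$ instead of dyadic blocks $n/2\le k\le n$.

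The compact part is already settled by your recursion $|b_k(z)|\le|z|\,|b_{k-1}(z)|+4|a_{k-1}|$ with $a_k\to0$; the paper uses Szeg\H{o}'s theorem $\phi_k\to0$, $\phi_k^*\to D_\mu^{-1}$ instead. So the fallback via $\mu_\ell$ is unnecessary. It would not transfer directly anyway, since the Blaschke products of $\mu_\ell$ are not the tails $b_{\ell+k}$.
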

\beginpf For $z \in \D$, set $\tilde b_n = \tilde\phi_{z,n}/\tilde\phi_{z,n}^*$, see \eqref{eq44}, \eqref{eq44star}. Note that the definition of $\tilde b_n$ depends on the choice of the reference point $z$. Thus, $\tilde b_{z,n}$ would be a more accurate notation for this function. However, we will deal with $\tilde b_n$ only at the same point $z$, therefore, the short notation $\tilde b_n$ should not lead to misunderstanding. By definition, we have 
\begin{align*}
|\tilde b_n(z) - b_n(z)| 
&= \left|\frac{z b_{n-1}(z) - \ov{f_{n-1}(z)}}{1 - z b_{n-1}(z)f_{n-1}(z)} - \frac{z b_{n-1}(z) - \ov{f_{n-1}(0)}}{1 - z b_{n-1}(z)f_{n-1}(0)}\right| \\
&\le \frac{2|f_{n-1}(0)| + 2|f_{n-1}(z)|}{(1 - |f_{n-1}(0)|)(1 - |f_{n-1}(z)|)}.
\end{align*}
It follows that $|\tilde b_n(z) - b_n(z)| \lesssim |f_{n-1}(0)| + |f_{n-1}(z)|$ if $\max \{|f_{n-1}(z)|, |f_{n-1}(0)| \} \leq 1/2$. Moreover, we have 
$$
|\tilde b_n(z) - \alpha_z z^n|^2 \lesssim e^{\K(\mu, z)} - 1, \qquad |\alpha_z| = 1,
$$
by Lemma 2.5 in \cite{Bes24} and \eqref{eq66}. Then \eqref{eq132} follows. To prove \eqref{eq133}, we estimate
\begin{align*}
\frac{2}{n}\sum_{k= n/2}^{n} |b_k(z)| 
\lesssim &\sqrt{e^{\K(\mu, z)} - 1} + \frac{2}{n(1-|z|)} + \\
&+ \frac{2|\{n/2 \le k \le n:\;|f_{k-1}(z)| > 1/2\}|}{n} + \etat_n(z)   \\
\lesssim &\sqrt{e^{\K(\mu, z)} - 1} + \frac{2}{n(1-|z|)} + \etat_n(z),
\end{align*}
where $\etat_n(z)$ is defined in \eqref{etatn}. Then 
$$
\lim_{n\to \infty}\sup_{z \in \Gamma_{\zeta, n}(\delta_n, \Delta_n)}\frac{2}{n}\sum_{k= n/2}^{n} |b_k(z)| = 0, 
$$
for any sequences $\Delta_n \to 1$, $\delta_n \to \infty$ with $\delta_n = o (n)$, by \eqref{eq21} and Lemma \ref{l32} (for the definition of 
$\Gamma_{\zeta, n}(\delta_n, \Delta_n)$, see \eqref{eq871}). At the same time, we have $\phi_n^* \to D_{\mu}^{-1}$, $\phi_n \to 0$ uniformly on compact subsets of $\D$ by Szeg\H{o} theorem, hence 
$$
\lim_{n\to \infty}\sup_{z \in \Gamma_{\zeta}(\Delta_n)}\frac{2}{n}\sum_{k= n/2}^{n} |b_k(z)| = 0
$$
for some sequence $\Delta_n \to 1$. Since $\Gamma_{\zeta}(\Delta_n) \cup \Gamma_{\zeta, n}(\delta_n, \Delta_n) = \Gamma_{\zeta, n}(\delta_n)$, see \eqref{eq871}-\eqref{eq873}, this implies that 
$$
\mathcal{B}_n := \sup_{z \in \Gamma_{\zeta  ,n}(\delta_n)}\frac{2}{n}\sum_{k= n/2}^{n} |b_k(z)| \to 0, \quad n \to \infty .  
$$
Now given a positive integer $N$, pick the positive integer $n$ with $2^{n-1} \leq N < 2^n$. Then 
$$
\frac{1}{N} \sum_{k=1}^N |b_k (z)| \leq \frac{2}{2^n} \sum_{j=0}^n 2^j \mathcal{B}_{2^j}
$$
which tends to $0$ as $N \to \infty$. This implies \eqref{eq133}. \qed

\medskip

\begin{Lem}\label{l-prod}
For every $\eps>0$ and every complex numbers $x_k$, $y_k$, such that 
$$
\frac{1}{n}\sum_{k=0}^{n-1}\left(|x_k - 1| + \bigl||x_k|^2 - 1\bigr| + |y_k - 1| + \bigl||y_k|^2 - 1\bigr|\right) < \eps
$$
we have
$$
\frac{1}{n}\sum_{k=0}^{n-1}|x_k y_k - 1| < 6\eps.
$$
\end{Lem}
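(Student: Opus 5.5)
The plan is a pointwise algebraic bound for each index, followed by averaging. For fixed $k$, write
$$
x_k y_k - 1 = (x_k - 1)(y_k - 1) + (x_k - 1) + (y_k - 1).
$$
The linear terms already appear in the hypothesis. Only the product $|x_k-1||y_k-1|$ needs work, since it cannot be bounded by a linear combination of $|x_k-1|$ and $|y_k-1|$ alone when both are large.

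To handle the product, I would use $ab \le \frac12(a^2+b^2)$ with $a = |x_k-1|$ and $b = |y_k-1|$. Then I would control $|x-1|^2$ by the quantities in the hypothesis. The key elementary inequality is
$$
|x-1|^2 \le |x-1| + \bigl||x|^2-1\bigr| \qquad \text{for all } x \in \C .
$$
When $|x-1| \le 1$ it holds trivially, since $|x-1|^2 \le |x-1|$. When $|x-1| > 1$, write $|x-1|^2 = |x|^2 - 2\Re x + 1$. I would try to show $|x|^2 - 2\Re x + 1 \le |x|^2 - 1 + |x-1|$, which is equivalent to $2 - 2\Re x \le |x-1|$. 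This fails in general, for example at $x=-1$. So instead I would use the cruder bound $|x-1|^2 \le (|x|+1)^2$, which gives
$$
|x-1|^2 \le 2|x|^2 + 2 = 2\bigl(|x|^2 - 1\bigr) + 4 \le 2\bigl||x|^2-1\bigr| + 4|x-1| ,
$$
using $|x-1| > 1$. In all cases this yields $|x-1|^2 \le 4|x-1| + 2\bigl||x|^2-1\bigr|$.

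Combining these bounds, for each $k$,
$$
|x_ky_k - 1| \le |x_k-1| + |y_k-1| + \tfrac12\bigl(4|x_k-1| + 2\bigl||x_k|^2-1\bigr| + 4|y_k-1| + 2\bigl||y_k|^2-1\bigr|\bigr) \le 3\bigl(|x_k-1| + \bigl||x_k|^2-1\bigr| + |y_k-1| + \bigl||y_k|^2-1\bigr|\bigr).
$$
Averaging over $0 \le k \le n-1$ then gives a bound of $3\eps$, which is below the claimed $6\eps$. The only delicate point is the quadratic term, and it is handled by the case split $|x-1| \le 1$ versus $|x-1| > 1$. Everything else is routine.
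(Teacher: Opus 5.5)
Your proof is correct, but it is organized differently from the paper's. The paper splits the indices according to whether $|x_k|+|y_k|\le 4$. On the small indices it writes $x_ky_k-1=(x_k-1)y_k+(y_k-1)$ and uses $|y_k|\le 4$. On the large indices one of $|x_k|,|y_k|$ exceeds $2$, and it bounds $|x_ky_k-1|\le 1+\max(|x_k|^2,|y_k|^2)$ directly by the quadratic terms. Adding the two index classes gives $4\eps+2\eps$.

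You instead use the symmetric identity $xy-1=(x-1)(y-1)+(x-1)+(y-1)$ together with $ab\le\frac12(a^2+b^2)$. This reduces everything to the one-variable inequality $|x-1|^2\le 4|x-1|+2\bigl||x|^2-1\bigr|$, which you verify correctly by splitting on $|x-1|\le 1$. The result is a single pointwise bound with constant $3$, hence the sharper conclusion $3\eps$. The paper's size split is equally elementary and avoids the product term $(x_k-1)(y_k-1)$ altogether. Its constant $6$ comes from bounding each index class by the full sum and adding, not from anything intrinsic.

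One presentational point: the inequality you first display as the key one, $|x-1|^2\le|x-1|+\bigl||x|^2-1\bigr|$, is false, as you yourself observe at $x=-1$. A final write-up should state only the corrected version $|x-1|^2\le 4|x-1|+2\bigl||x|^2-1\bigr|$.
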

\beginpf Define $I_{n,s} = \{0\le k\le n-1 \colon |x_k| + |y_k| \le 4\}$ and $I_{n,b} = \{0\le k\le n-1 \colon |x_k| + |y_k| > 4\}$. We have
\begin{align*}
\frac{1}{n}\sum_{k\in I_{n,s}}|x_k y_k - 1| 
&\le \frac{1}{n}\sum_{k\in I_{n,s}}|x_k - 1| |y_k| + \frac{1}{n}\sum_{k\in I_{n,s}}|y_k - 1|\\
&\le \frac{4}{n}\sum_{k=0}^{n-1}|x_k - 1| + \frac{1}{n}\sum_{k=0}^{n-1}|y_k - 1| < 4\eps.
\end{align*}
On the other hand, for $k \in I_{n,b}$ we have either $|x_k| > 2$ or $|y_k| > 2$, hence 
\begin{align*}
|x_k y_k - 1| 
&\le 1 + \max(|x_k|^2, |y_k|^2), \\
&\le \max(|x_k - 1|, |y_k - 1|) + 2\max(\bigl||x_k|^2 - 1\bigr|, \bigl||y_k|^2 - 1\bigr|),\\
&\le 2(|x_k - 1| + \bigl||x_k|^2 - 1\bigr| + |y_k - 1| + \bigl||y_k|^2 - 1\bigr|),
\end{align*}
and our assumption implies the bound $n^{-1} \sum_{k\in I_{n,b}}|x_k y_k - 1| < 2\eps$. \qed
\begin{Lem}\label{7p5}
Let $\mu \in \szc$ and $\zeta \in \T$ be such that $D_\mu$ has a non-zero finite non-tangential limit at $\zeta$. If \eqref{eq22} holds at $\zeta$, then we have 
$$
\lim_{n \to \infty}\sup_{z_1, z_2 \in \Gamma_{\zeta}}\frac{1}{n}\sum_{k=0}^{n-1}\bigl|r_k(z_1, z_2)\bigl|= 0
$$
for the functions $r_n$ defined in \eqref{universality0-thm4-bis}.
\end{Lem}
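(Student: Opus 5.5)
The strategy is to express $1+r_k(z_1,z_2)$ through quantities whose Cesàro behaviour is already controlled. These are $x_k=\phi_k^*(z_1)D_\mu(z_1)$ and $y_k=\ov{\phi_k^*(z_2)D_\mu(z_2)}$, together with the Blaschke quotients $b_k=\phi_k/\phi_k^*$. Lemma \ref{l-prod} then converts Cesàro closeness of $x_k,y_k$ to $1$ into Cesàro closeness of products. By \eqref{reprokernel}, $\phi_k=b_k\phi_k^*$, and \eqref{universality0-thm4-bis}, we have
$$
(1 - \bar z_2^{k} z_1^{k})\bigl(1+r_k(z_1,z_2)\bigr) = x_k y_k\bigl(1-\ov{b_k(z_2)}b_k(z_1)\bigr).
$$
Theorem \ref{thm2} and Theorem \ref{thm1} (assumption \eqref{eq22} implies \eqref{eq21}) give, uniformly in $z_1,z_2\in\Gamma_\zeta$,
$$
\frac1n\sum_{k<n}\Bigl(|x_k-1|+\bigl||x_k|^2-1\bigr|+|y_k-1|+\bigl||y_k|^2-1\bigr|\Bigr)\to0 .
$$
Hence Lemma \ref{l-prod} yields $\frac1n\sum_{k<n}|x_ky_k-1|\to0$ uniformly. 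The remaining task is to control the factor involving $b_k$ and $z^k$.

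Split according to the size of $|z_1|,|z_2|$ relative to $1-\delta_n/n$, with $\delta_n\to\infty$ slowly and $\delta_n=o(n)$. If both points lie in $\Gamma_{\zeta,n}(\delta_n)$, then \eqref{eq133} gives $\frac1n\sum_k|b_k(z_j)|\to0$ uniformly. On the proportion of indices $k$ with $k\ge \eps n$ we have $|z_j|^k\le e^{-\eps\delta_n}\to0$, so the factor $(1-\ov{b_k(z_2)}b_k(z_1))/(1-\bar z_2^kz_1^k)$ is Cesàro close to $1$. The $b$-factor is bounded by $2$ and the denominator is bounded below by $1-|z_1z_2|^k$, so the small indices $k<\eps n$ contribute $O(\eps)$ times the averages of $|x_ky_k|$. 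These averages are bounded by \eqref{eq45}, but a crude bound fails for $k$ tiny with $z_1,z_2$ both near $\zeta$, because $1-\bar z_2^kz_1^k$ can be small. For such $k$ I would instead use directly that $k_{\mu,k}(z_1,z_2)=\sum_{j<k}\phi_j(z_1)\ov{\phi_j(z_2)}$ with $k\le \eps n$. Then Lemma \ref{7p2}-type estimates, or simply the Cauchy–Schwarz bound $|k_{\mu,k}(z_1,z_2)|^2\le k_{\mu,k}(z_1,z_1)k_{\mu,k}(z_2,z_2)$ combined with Lemma \ref{l51}, bound $|1+r_k|$ by a quantity whose average over $k<\eps n$ is $O(\eps)$ plus a term handled by $\bigl|\frac{1-\bar z_2^kz_1^k}{1-\bar z_2z_1}\bigr|$ comparisons.

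Now suppose at least one point, say $z_1$, lies in $\Gamma'_{\zeta,n}(\delta_n)$, i.e. near the vertex. I would compare with the point $\lambda=(1-\delta_n/n)\zeta$. The analogue of Lemma \ref{domain-bound} with $\tilde\phi^*_{\lambda,k}$ and Lemma \ref{l50} give $b_k(z_1)\approx\alpha z_1^k$ for most $k$ (those with $|f_{k-1}(\lambda)|$ small, as in Lemma \ref{uniftruncated2}). Then $1-\ov{b_k(z_2)}b_k(z_1)$ and $1-\bar z_2^kz_1^k$ are compared with the local universality relation \eqref{universality} from \cite{Bes24}, which governs the region $|z-\zeta|\lesssim\delta_n/n$. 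The contribution of the exceptional indices is small by \eqref{primer}.

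The main obstacle is the ratio $(1-\ov{b_k(z_2)}b_k(z_1))/(1-\bar z_2^kz_1^k)$ when the denominator is small. This happens near the diagonal with $k(1-|z|)\lesssim1$, and also when $z_1\ne z_2$ are far apart. My first attempt would be to avoid division altogether. I would write
$$
r_k = \frac{(x_ky_k-1)(1-\ov{b_k(z_2)}b_k(z_1)) + (\bar z_2^kz_1^k-\ov{b_k(z_2)}b_k(z_1))}{1-\bar z_2^kz_1^k}
$$
and bound the numerator's second term by $|b_k(z_j)-\alpha_jz_j^k|$. This error vanishes Cesàro-wise by Lemma \ref{l50} and Lemma \ref{7p3}, with the phases matched since $\alpha_j\to$ a common limit as both points approach $\zeta$. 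The denominator is then handled via the analyticity in $z_1$ and the maximum principle on hyperbolic disks, as in Lemma \ref{l50}.
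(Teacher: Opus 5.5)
You have the right skeleton. The identity $(1-\bar z_2^{k}z_1^{k})(1+r_k)=x_ky_k\bigl(1-\overline{b_k(z_2)}\,b_k(z_1)\bigr)$, Lemma \ref{l-prod} combined with Theorems \ref{thm1} and \ref{thm2}, and \eqref{eq133} at points with $|z|\le 1-\delta_n/n$ are exactly the paper's ingredients. The gap is in the step you call the main obstacle: the resolution you propose would not work.

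Since $|1-\bar z_2^kz_1^k|\ge 1-|z_1z_2|^k$, the denominator is small only when both points lie within a small multiple of $1/k$ of $\zeta$. There it degenerates completely. At $z_1=z_2=\zeta$ it vanishes, and so do both numerator terms in your splitting of $r_k$, which becomes $0/0$. Additive bounds on the numerator, whether Ces\`aro smallness of $x_ky_k-1$ or $|b_k(z_j)-\alpha_jz_j^k|\lesssim\varepsilon$ from Lemma \ref{l50}, say nothing once you divide by a quantity that can be far smaller than $\varepsilon$. The maximum principle on hyperbolic disks cannot repair this. Lemma \ref{l50} controls $b_k$ only on hyperbolic disks of bounded radius about an interior point $\lambda\in\Gamma_\zeta$, and these never reach the Euclidean neighbourhood of $\zeta$ of size $\sim 1/k$ where the degeneration occurs. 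For the same reason, comparing $z_1\in\Gamma'_{\zeta,n}(\delta_n)$ with $\lambda=(1-\delta_n/n)\zeta$ through Lemma \ref{l50} fails for points such as $z_1=\zeta$, which are at infinite hyperbolic distance from $\lambda$.

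The missing idea is that you never need to divide by a small quantity. Split pairs into those with at least one point in $\Gamma_{\zeta,n}(\delta_n)$ and those with both points in $\Gamma'_{\zeta,n}(\delta_n)$.

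\textbf{At least one point in $\Gamma_{\zeta,n}(\delta_n)$.} This class contains your ``one near, one far'' case, so no comparison with $\lambda$ is needed. On the block $n/2\le k\le n$ we have $|\bar z_2^kz_1^k|\le e^{-\delta_n/2}$, hence $|1-\bar z_2^kz_1^k|\asymp 1$. Also $|\overline{b_k(z_2)}b_k(z_1)|$ is at most $|b_k|$ at the far point. Your ingredients (Lemma \ref{l-prod} and Lemma \ref{7p3}) then close the argument with exactly the decomposition you wrote.

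\textbf{Both points in $\Gamma'_{\zeta,n}(\delta_n)$.} This is precisely the regime where the denominator degenerates, and here the paper does not analyse $b_k$ at all. The local universality theorem of \cite{Bes24} (Theorem~1.1 there, with $\delta_n\to\infty$ slowly) gives smallness of $r_k$ directly.

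Finally, the paper passes from the block bounds $\sup_{z_1,z_2\in\Gamma_\zeta}\frac2n\sum_{n/2\le k\le n}|r_k|\to0$ to the full Ces\`aro average by a dyadic summation, which makes your $\varepsilon n$ cutoff unnecessary. Your Cauchy--Schwarz treatment of small $k$ is nevertheless sound: combined with Lemma \ref{l51} and $|1-\bar z_2z_1|\lesssim 1-|z_1z_2|$ in $\Gamma_\zeta$, it gives $|r_k|\lesssim 1$ whenever $|z_1z_2|^k\ge 1/2$.
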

\beginpf Given a sequence $\delta_n \to \infty$, $\delta_n = o(n)$, let
\begin{align*}
\Omega_n(\delta_n) &= \{(z_1, z_2): \; z_1 \mbox{ or } z_2 \in \Gamma_{\zeta, n}(\delta_n)\},\\
\Omega'_n(\delta_n) &= \{(z_1, z_2): \; z_1, z_2 \in \Gamma_{\zeta}\setminus \Gamma_{\zeta, n}(\delta_n)\}.
\end{align*}
It follows from Theorem 1.1 in \cite{Bes24} that there exists a sequence $\delta_n \to \infty$, $\delta_n = o(n)$, such that the numbers
$$
\mathcal{E}_{1,n} = \sup_{(z_1, z_2) \in \Omega'_n(\delta_n)}|r_n(z_1, z_2)|
$$
tend to $0$ as $n \to \infty$. Therefore, we need to estimate $r_n(z_1,z_2)$ for the pairs $(z_1, z_2)$ in $\Omega_n(\delta_n)$. Set $b_n = \phi_n/\phi_n^*$. Using the formula \eqref{reprokernel} written in the form
$$
k_{\mu, n}(z_1, z_2) = \phi_n^*(z_1)\ov{\phi_n^*(z_2)}\frac{1 - b_n(z_1)\ov{b_n(z_2)}}{1 - \bar z_2 z_1}, \qquad b_n = \frac{\phi_n}{\phi^*_n},
$$
we express funtions $r_n$ in \eqref{universality0-thm4-bis} in the form
\begin{align*}
r_n(z_1, z_2) 
= &\frac{(\ov{D_{\mu}(z_2)}D_{\mu}(z_1)\phi_n^*(z_1)\ov{\phi_n^*(z_2)} - 1)(1 - b_n(z_1)\ov{b_n(z_2)})}{1 - \bar z_2^{n} z_1^{n}} + \\ 
&+  \frac{ \bar z_2^{n} z_1^{n} - b_n(z_1)\ov{b_n(z_2)}}{1 - \bar z_2^{n} z_1^{n}},
\end{align*}
by means of an algebraic manipulation. For $(z_1, z_2) \in \Omega_n(\delta_n)$ and $n/2 \le k \le n$, we have $|1 - \bar z_2^{k} z_1^{k}| \asymp 1$. Consider the numbers
\begin{align*}
\mathcal{E}_{2,n} 
= &\sup_{(z_1, z_2) \in \Omega_n(\delta_n)}\frac{2}{n}\sum_{n/2\le k \le n}|r_k(z_1, z_2)| \\
\lesssim &\sup_{z_1, z_2 \in \Gamma_{\zeta}}\frac{2}{n}\sum_{n/2\le k \le n}|\ov{D_{\mu}(z_2)}D_{\mu}(z_1)\phi_k^*(z_1)\ov{\phi_k^*(z_2)} - 1| + \\
&+ \sup_{(z_1, z_2) \in \Omega_n(\delta_n)}\frac{2}{n}\sum_{n/2\le k \le n} |z_1^k \bar z_2^k| + \sup_{(z_1, z_2) \in \Omega_n(\delta_n)}\frac{2}{n}\sum_{n/2\le k \le n}|b_k(z_1) b_k(z_2)|. 
\end{align*}
Theorem \ref{thm1}, Theorem \ref{thm2} and Lemma \ref{l-prod} imply that  
$$
\lim_{n\to \infty}\sup_{z_1, z_2 \in \Gamma_{\zeta}}\frac{1}{n}\sum_{0\le k \le n-1}|\ov{D_{\mu}(z_2)}D_{\mu}(z_1)\phi_k^*(z_1)\ov{\phi_k^*(z_2)} - 1| = 0.
$$
The last relation, the fact that $\delta_n \to \infty$, and Lemma \ref{7p3} give $\mathcal{E}_{2,n} \to 0$ (recall that $b_n$ is a Blaschke product, so $|b_n| < 1$ in $\D$ and one can apply Lemma \ref{7p3} to the point $z$ ($=z_1$ or $z_2$) lying in $\Gamma_{\zeta, n}(\delta_n)$ for estimating $\frac{2}{n}\sum_{n/2\le k \le n}|b_k(z_1) b_k(z_2)|$). Combining this fact with $\mathcal{E}_{1,n} \to 0$, we see that the quantities
$$
\mathcal{E}_{3,n} =\sup_{z_1, z_2 \in \Gamma_{\zeta}}\frac{2}{n}\sum_{n/2\le k \le n}|r_k(z_1, z_2)|
$$
also tend to zero as $n \to \infty$. At the same time, for every integer $\ell \ge 1$ we have
$$
\sup_{z_1, z_2 \in \Gamma_{\zeta}}\frac{1}{2^\ell}\sum_{1\le k \le 2^\ell}|r_k(z_1, z_2)| \le \frac{\mathcal{E}_{3,2^\ell}}{2} + \frac{\mathcal{E}_{3,2^{\ell-1}} }{4}+ \frac{\mathcal{E}_{3,2^{\ell-2}}}{8} + \ldots +\frac{\mathcal{E}_{3,2}}{2^{\ell}}.
$$
This estimate and the fact that $\mathcal{E}_{3,n} \to 0$ yield the statement. \qed

\medskip

\begin{Lem}\label{7p6}
Let $\mu \in \szc$ and $\zeta \in \T$ be such that $D_\mu$ has a non-zero finite non-tangential limit at $\zeta$. If \eqref{eq23} holds at $\zeta$, then we have 
$$
\lim_{n \to \infty} \sup_{z_1, z_2 \in \Gamma_{\zeta}}\bigl|r_n(z_1, z_2)\bigl| = 0
$$
for the functions $r_n$ defined in \eqref{universality0-thm4-bis}.
\end{Lem}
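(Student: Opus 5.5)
We follow the proof of Lemma \ref{7p5}, but replace the Ces\`aro information by the pointwise uniform convergence of Theorem \ref{thm3}. As there, we use Theorem 1.1 in \cite{Bes24} to choose a sequence $\delta_n \to \infty$, $\delta_n = o(n)$, such that
$$
\sup_{(z_1,z_2)\in\Omega'_n(\delta_n)}|r_n(z_1,z_2)| \to 0, \qquad n\to\infty .
$$
It then suffices to control $r_n$ on $\Omega_n(\delta_n)$, i.e., when at least one of $z_1, z_2$ lies in $\Gamma_{\zeta,n}(\delta_n)$. For this we use the algebraic identity from the proof of Lemma \ref{7p5}:
$$
r_n(z_1,z_2) = \frac{(\ov{D_{\mu}(z_2)}D_{\mu}(z_1)\phi_n^*(z_1)\ov{\phi_n^*(z_2)} - 1)(1 - b_n(z_1)\ov{b_n(z_2)})}{1 - \bar z_2^{n} z_1^{n}} + \frac{ \bar z_2^{n} z_1^{n} - b_n(z_1)\ov{b_n(z_2)}}{1 - \bar z_2^{n} z_1^{n}}.
$$
On $\Omega_n(\delta_n)$ one of $|z_1|^n$, $|z_2|^n$ is at most $(1-\delta_n/n)^n \to 0$. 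Hence the denominator satisfies $|1-\bar z_2^n z_1^n| \asymp 1$, and the term $|\bar z_2^n z_1^n|$ tends to $0$ uniformly.

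\emph{First term.} By Theorem \ref{thm3}, under \eqref{eq23} we have $\sup_{z\in\Gamma_\zeta}|\phi_n^*(z)D_\mu(z)-1|\to 0$. Therefore the product $\ov{D_{\mu}(z_2)}D_{\mu}(z_1)\phi_n^*(z_1)\ov{\phi_n^*(z_2)}$ tends to $1$ uniformly in $z_1,z_2\in\Gamma_\zeta$. Since $|1-b_n(z_1)\ov{b_n(z_2)}|\le 2$ (the $b_n$ are Blaschke products), the first term tends to $0$ uniformly on $\Omega_n(\delta_n)$.

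\emph{Second term.} Here we need a pointwise, not averaged, version of Lemma \ref{7p3}. Suppose, say, $z_1\in\Gamma_{\zeta,n}(\delta_n)$; since $|b_n(z_2)|\le 1$, it is enough to show
$$
\sup_{z\in\Gamma_{\zeta,n}(\delta_n)}|b_n(z)| \to 0 .
$$
Under \eqref{eq23}, Lemma \ref{lemChain} gives $\sup_{z\in\Gamma_\zeta}|f_{n-1}(z)|\to 0$, and $f_{n-1}(0)\to 0$. So for large $n$ the hypothesis $\max\{|f_{n-1}(z)|,|f_{n-1}(0)|\}\le 1/2$ of \eqref{eq132} holds uniformly on $\Gamma_\zeta$, and \eqref{eq132} yields
$$
|b_n(z)| \lesssim \sup_{\Gamma_\zeta}|f_{n-1}| + |f_{n-1}(0)| + (1-\delta_n/n)^n + \sqrt{e^{\K(\mu,z)}-1}.
$$
The first three terms tend to $0$. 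The last term is small near $\zeta$ because \eqref{eq23} implies \eqref{eq21}, but it need not be small on compact pieces $\{|z|\le \Delta\}$ of the cone. There we instead use Szeg\H{o}'s theorem: $\phi_n\to 0$ and $\phi_n^*\to D_\mu^{-1}$ uniformly on compact subsets of $\D$, so $b_n\to 0$ uniformly on $\Gamma''_\zeta(\Delta_n)$ for some sequence $\Delta_n\to 1$. On the complementary part $\Gamma_{\zeta,n}(\delta_n,\Delta_n)$, the quantity $\sup\K(\mu,z)$ tends to $0$ by \eqref{eq21}. Combining the two regions gives $\sup_{\Gamma_{\zeta,n}(\delta_n)}|b_n|\to 0$.

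The main obstacle is this last step: making the pointwise bound \eqref{eq132} uniform requires the uniform smallness of $f_{n-1}$ on the whole cone, which is supplied exactly by Lemma \ref{lemChain}, together with splitting the cone into a compact part and a part near $\zeta$. With all terms controlled, $\sup_{z_1,z_2\in\Gamma_\zeta}|r_n(z_1,z_2)|\to 0$.
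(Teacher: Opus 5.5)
Your proof is correct, but it takes a different route from the paper's. The paper does not split $\Gamma_\zeta\times\Gamma_\zeta$. Using Lemma 2.5 of \cite{Bes24} and Lemma \ref{lemChain}, it shows $b_n(z)=\alpha_z z^n+o(1)$ uniformly on the whole angle, with $\alpha_z=\ov{\phi_n^*(z)}/\phi_n^*(z)\,(1+o(1))$. Theorem \ref{thm3} and the non-tangential limit of $D_\mu$ then let it replace $\alpha_z$ by a single $\alpha\in\T$ near $\zeta$, and Szeg\H{o}'s theorem handles compact pieces. This gives $\sup_{z_1,z_2\in\Gamma_\zeta}|b_n(z_1)\ov{b_n(z_2)}-\bar z_2^n z_1^n|\to 0$, which the paper inserts into the Christoffel--Darboux form of $k_{\mu,n}$ together with \eqref{noou}.

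You instead reuse the splitting from Lemma \ref{7p5}. Near the vertex, on $\Omega'_n(\delta_n)$, you invoke Theorem 1.1 of \cite{Bes24}. On $\Omega_n(\delta_n)$, where $|1-\bar z_2^n z_1^n|\to 1$, you only need $\sup_{\Gamma_{\zeta,n}(\delta_n)}|b_n|\to 0$, a pointwise version of Lemma \ref{7p3} made uniform by Lemma \ref{lemChain}. No information about the phase of $b_n$ is required.

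The paper's route yields the stronger structural fact that $b_n\approx\alpha z^n$ on the entire angle. However, an additive $o(1)$ bound on $b_n(z_1)\ov{b_n(z_2)}-\bar z_2^n z_1^n$ controls the relative error $r_n$ only where $|1-\bar z_2^n z_1^n|$ stays away from $0$. When both points satisfy $1-|z_j|\lesssim \tau/n$ with $\tau$ small, the numerator and denominator of $(1-b_n(z_1)\ov{b_n(z_2)})/(1-\bar z_2^n z_1^n)$ both degenerate, and the paper's closing sentence passes over this region. Your splitting isolates exactly that region and treats it with the local universality of \cite{Bes24}, as the paper itself does in Lemmas \ref{7p2} and \ref{7p5}. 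The cost is reliance on that external theorem, which is also the only place your argument uses the non-tangential limit of $D_\mu$.
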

\beginpf As in the proof of Lemma \ref{7p3}, for each $z \in \D$ we set $b_n = \phi_{n}/\phi_{n}^*$, $\tilde b_n = \tilde\phi_{z,n}/\tilde\phi_{z,n}^*$, so that $|\tilde b_n(z) - b_n(z)| \lesssim |f_{n-1}(0)| + |f_{n-1}(z)|$ if $|f_{n-1}(z)|$, $|f_{n-1}(0)|$ are less than $1/2$. By Lemma \ref{lemChain}, the last assumption holds for all $z \in \Gamma_\zeta$ if $n$ is sufficiently large. On the other hand, we have 
$$
|\tilde b_n(z) - \alpha_ z z^n|^2 \lesssim e^{\K(\mu, z)} - 1
$$
by Lemma 2.5 in \cite{Bes24}. Here, 
$$
\alpha_ z = \frac{\ov{\tilde\phi_{z,n}^*(z)}}{\tilde\phi_{z,n}^*(z)} = \frac{\ov{\phi_{n}^*(z)}}{\phi_{n}^*(z)}(1+o(1)) = \frac{\ov{D_{\mu}(z)^{-1}}}{D_{\mu}(z)^{-1}}(1+o(1)), 
$$
with uniform in $\Gamma_\zeta$ remainders $o(1)$ by Lemma \ref{lemChain} and Theorem \ref{thm3}. Since $D_{\mu}$ has a finite non-zero limit at $\zeta$, the quantity $\alpha_ z$ tends to some $\alpha \in \T$ when $z$ approaches $\zeta$ non-tangentially. We also have $b_n(z) \to 0$ uniformly on compact subsets of $\D$ by the Szeg\H{o} theorem. It follows that 
$$
\lim_{n\to \infty}\sup_{z_1, z_2 \in \Gamma_{\zeta}}|b_n(z_1)\ov{b_n(z_2)}- \bar z_2^n z_1^n| = \lim_{n\to \infty}\sup_{z_1, z_2 \in  \Gamma_{\zeta}}|\ov{\alpha z_2^n} \cdot \alpha z_1^n - \bar z_2^n z_1^n| = 0.
$$
It remains to use the fact that 
$$
k_{\mu, n}(z_1, z_2) = \phi_n^*(z_1)\ov{\phi_n^*(z_2)}\frac{1 - b_n(z_1)\ov{b_n(z_2)}}{1 - \bar z_2 z_1}, 
$$
and the uniform asymptotic relation \eqref{noou} in Theorem \ref{thm3}. \qed

\medskip

\noindent{\bf Proof of Theorem \ref{thm4}.} See Lemma \ref{7p2}, Lemma \ref{7p5}, and Lemma \ref{7p6}. \qed

\medskip

\section{Proofs of Propositions \ref{p31}, \ref{p21} and \ref{p11}}

\noindent {\bf Proof of Proposition \ref{p11}.} Denote $u = \log w$. Since the value $\K(\mu, z)$ of the entropy of $\mu$ at any point $z \in \D$ is invariant under multiplication of $\mu$ by a constant, we can assume that $u(\zeta) = 0$. Since $u$ is bounded at a neighborhood of $\zeta$ and condition \eqref{eq23} is local (provided that $u \in L^1(\T)$ as we always assume), we can suppose that $u$ is bounded on $\T$ and vanishes on $\T \setminus I_0$. We have 
$$
\Pp(w, z) = \Pp(e^{u}, z) =  1 + \Pp(u, z) +  O\left(\Pp(u^{2}, z)\right), 
$$
and, since $\Pp(u, z)^2 \le \Pp(u^2, z)$, 
$$
\log\Pp(w, z) = \Pp(u, z) +  O\left(\Pp(u^{2}, z)\right).    
$$
At the same time, we have $\Pp(\log w, z) = \Pp(u, z)$ by definition. Thus, we have
$$
\K(\mu, z) \lesssim \Pp(u^{2}, z), \quad z \in \D. 
$$
Standard estimates of the Poisson kernel give (recall that $u=0$ on $\T\setminus I_0$)
$$
\Pp(u^2, z_n (\zeta)) \lesssim \sum_{k=0}^n 2^{2k - n} \int_{I_k} u^2 \,dm. 
$$
Thus,
$$
\sqrt{\K(\mu , z_n ( \zeta))} \lesssim \sum_{k=0}^n 2^{k - n/2} \left( \int_{I_k} u^2 \,dm \right)^{1/2},
$$
and hence
$$
\sum_{n=0}^\infty \sqrt{\K(\mu , z_n (\zeta))} \lesssim  \sum_{k=0}^\infty   2^{k/2} \left( \int_{I_k} u^2 \,dm \right)^{1/2}.
$$
The result follows. \qed

\medskip

\noindent {\bf Proof of Proposition \ref{p21}.} As in the proof of Proposition \ref{p11}, we denote $u =\log w$, assume that $u(\zeta) =0$ and $u$ is bounded on $\T$, and then
$$
\K(\mu, z) \lesssim \Pp(u^{2}, z), \quad z \in \D. 
$$
We see that
$$
\sum_{n=0}^\infty \K(\mu , z_n (\zeta)) \lesssim \sum_{n=0}^\infty \Pp(u^{2}, z_n (\zeta)),
$$
and one only needs to use the estimate
$$
\sum_{n=0}^\infty \frac{1- |z_n (\zeta)|^2}{|1- z_n (\zeta)\overline{\xi} |^2} \lesssim \int_0^1 \frac{dr}{|\xi- r \zeta|^2} \lesssim \frac{1}{|\xi - \zeta|} , \quad \xi , \zeta \in \T,
$$
to complete the proof. \qed

\medskip

\noindent {\bf Proof of Proposition \ref{p31}.} We need to show that condition \eqref{eq22} holds for almost every point $\zeta \in \T$. Arguing as in the proof of Proposition \ref{p11}, we arrive at the estimate
$$
\K(\mu, r \zeta) \lesssim \int_{\T} ( u(\xi) - u(\zeta))^2 \frac{1- r^2}{|\xi -  r \zeta|^2}\,dm(\xi) , \qquad 0\le r<1, \quad  \zeta \in \T. 
$$
Denote by $A$ the Lebesgue measure on $\C$ normalized by $A(\D) = 1$. We have
\begin{align*}
& \int_{\D} \frac{\K(\mu , z)}{1-|z|^2}\, dA(z) 
= \int_{\T} \int_0^1 \frac{\K(\mu, r \zeta) r}{1- r^2} dr \, dm (\zeta)\lesssim\\
& \lesssim \int_{\T} \int_{\T} (u(\xi) - u(\zeta))^2 \int_0^1 \frac{dr}{|\xi -  r \zeta|^2} \,dm(\xi) \, dm (\zeta).     
\end{align*}
Since
$$
\int_0^1 \frac{dr}{|\xi -  r \zeta|^2} \lesssim \frac{1}{|\xi - \zeta|}, \quad \xi , \zeta \in \T , 
$$
we deduce that 
$$
\int_{\D} \frac{\K(\mu , z)}{1-|z|^2} \, dA(z) \lesssim \int_{\T} \int_{\T} \frac{( u(\xi) - u(\zeta))^2}{|\xi - \zeta|}  \,dm(\xi) \, dm (\zeta).     
$$
Denoting $\zeta = e^{2 \pi i \theta}$ and $\xi = e^{2 \pi i (\theta + h)}$, the last double integral rewrites as
$$
 \int_{0}^{1}\frac{1}{|1 - e^{2 \pi i h}|} \int_{0}^{1}(u(e^{2 \pi i \theta}) - u (e^{2 \pi i (\theta + h)}))^2  \,d\theta \, dh. 
$$
The Fourier coefficients of the real-valued function $\theta \mapsto u(e^{2 \pi i \theta}) - u (e^{2 \pi i (\theta + h)})$ with respect to the basis 
$\{e^{2 \pi i k \theta}\}_{k \in \Z}$ in $L^2[0,1]$ are $(1- e^{2 \pi i k h})\hat u(k)$, $k \in \Z$. 
Then, Parseval's identity gives
\begin{align*}
    \int_{\T} \int_{\T}  \frac{( u(\xi) - u(\zeta))^2}{|\xi -  \zeta|} \, dm (\xi) \, dm (\zeta) & =   \sum_{k=- \infty}^{+ \infty} |\hat u(k)|^2 \int_{0}^{1} \frac{|1- e^{2 \pi i k h}|^2}{|1 - e^{2 \pi i h}|} dh \\ & \asymp  \sum_{k=- \infty}^{+ \infty} |\hat u(k)|^2 \log (2+ |k|) . 
\end{align*}
Hence assumption \eqref{eq33} implies
$$
\int_{\D} \frac{\K(\mu , z)}{1-|z|^2}\,dA(z) < \infty. 
$$
Fubini theorem then gives
\begin{align*}
 \int_{\T} \int_{\Gamma_\zeta} \frac{\K (\mu , z)}{(1- |z|^2)^2} dA(z) \, dm (\zeta) & = \int_{\D} \frac{\K (\mu , z)}{(1- |z|^2)^2} m (\{ \zeta \in \T : z \in \Gamma_\zeta \}) d A(z) \\ & \lesssim \int_{\D} \frac{\K(\mu , z)}{1-|z|^2}\,d A(z),    
\end{align*}
from where we deduce that 
$$
\int_{\Gamma_\zeta} \frac{\K(\mu , z)}{(1- |z|^2)^2}\,dA(z) < \infty, 
$$
for almost every $\zeta \in \T$. Hence condition \eqref{eq22} holds for almost every $\zeta \in \T$. \qed

\medskip

\noindent {\bf Acknowledgment.} The authors are grateful to S.\,Denisov for drawing their attention to the paper \cite{Ul92}.

\smallskip

\noindent The work of RB is supported by the Slovenian Research Agency ARIS (grants J1-70033 and P1-0291). The work of AN is supported by the Spanish Ministerio de Ciencia e Innovaci\'on (grant PID2021-123151NB-I00), by the Generalitat de Catalunya (grant 2021 SGR 00071), and by the Spanish Research
Agency through the María de Maeztu Program (CEX2020-001084-M).

\medskip

\bibliographystyle{plain} 	
\bibliography{bibfile} 
\end{document}